\title{Definably Topological Dynamics of $p$-Adic Algebraic Groups}
\date{}
\date{\today}
\author{Jiaqi Bao and Ningyuan Yao}
\def\Ga{\mathbb{G}_a}
\def\Gm{\mathbb{G}_m}
\def\nn{{\mathcal{N}}}
\def\tp#1{\textnormal{tp}(#1)}
\newcommand\tpo[3][]{\textnormal{tp}^{#1}\left(#2\middle/#3\right)}
\newcommand\dcl[2][]{\textnormal{dcl}^{#1}(#2)}
\newcommand\cl[2][]{\textnormal{cl}^{#1}(#2)}
\def\alg{\textnormal{alg}}
\newcommand\Q{{\mathbb Q}_p}
\newcommand\Z{{\mathbb Z}_p}
\newcommand\z[1][]{\mathbb{Z}_{#1}}
\def\r{\mathbb{R}}
\newcommand\q[1][]{\mathbb{Q}_{#1}}
\def\pM{p^*_{M}}
\def\pM0{p^*_{M_0}}
\def\i{\mathcal{I}}
\def\m{\mathcal{M}}
\def\pcf{p\mathrm{CF}}
\def\Gen{\mathrm{Gen}}
\def\ext{\mathrm{ext}}
\def\j{\mathcal{J}}
\def\M{\mathbb{M}}
\def\K{\mathbb{K}}
\def\N{\mathbb{N}}
\def\pcf{p\textnormal{CF}}
\def\Th{\textnormal{Th}}
\def\Def{\textnormal{Def}}
\def\GL{\textnormal{GL}}
\def\SL{\textnormal{SL}}
\def\TM{\textnormal{Tr}_{{}_M}}
\def\TN{\textnormal{Tr}_{{}_N}}
\def\ext{\textnormal{\lowercase{ext}}}
\def\ia{\i_{_A}^{^M}}
\def\ik{\i_{_K}^{^M}}
\def\jh{\j_{_H}^{^M}}
\def\jb{\i_{_B}^{^M}}
\def\eb{E_B^{^M}}
\def\ea{E_A^{^M}}
\def\id{\text{id}}
\newcommand\stb[2][]{\textnormal{Stab}_{#1}(#2)}
\def\st{\textnormal{st}}
\def\sq{\subseteq}
\def\Ind#1#2{#1\setbox0=\hbox{$#1x$}\kern\wd0\hbox to 0pt{\hss$#1\mid$\hss}
	\lower.9\ht0\hbox to 0pt{\hss$#1\smile$\hss}\kern\wd0}
\def\notind#1#2{#1\setbox0=\hbox{$#1x$}\kern\wd0
	\hbox to 0pt{\mathchardef\nn=12854\hss$#1\nn$\kern1.4\wd0\hss}
	\hbox to 0pt{\hss$#1\mid$\hss}\lower.9\ht0 \hbox to 0pt{\hss$#1\smile$\hss}\kern\wd0}
\def\invlim{\mathop{\lim\limits_{\longleftarrow}}}
\newtheorem{dfn}{Definition}[subsection]
\newtheorem*{Thm}{Theorem}
\newtheorem*{Cor}{Corollary}
\newtheorem*{Conj}{Conjecture}
\newtheorem*{Example}{Example}
\newtheorem{thm}{Theorem}
\newtheorem{fct}[dfn]{Fact}
\newtheorem{lem}[dfn]{Lemma}
\newtheorem{pro}[dfn]{Proposition}
\newtheorem{cor}[dfn]{Corollary}
\begin{document}
	\bibliographystyle{siam}
	
	\maketitle
	
	\begin{abstract}
		We study  the $p$-adic algebraic groups $G$ from the  definable topological-dynamical point of view. We consider the case that $M$ is an arbitrary $p$-adic closed field and $G$  an algebraic group over $\Q$ admitting an Iwasawa decompostion $G=KB$, where $K$ is open and definably compact  over $\Q$, and $B$ is a borel subgroup of $G$ over $\Q$. 
		Our main result is an explicit description of the minimal subflow and Ellis Group of the universal definable $G(M)$-flow $S_G(M^\ext)$. We prove  that the Ellis group of $S_G(M^\ext)$ is isomorphic to the Ellis group of $S_B(M^\ext)$, which is $B/B^0$. 
		
		As applications, we conclude that the Ellis groups corresponding to $\GL(n,M)$ and $\SL(n,M)$ are isomorphic to $(\hat \z \times \Z^*)^n$ and $(\hat \z \times \Z^*)^{n-1}$ respectively, generalizing the main result of Penazzi, Pillay, and Yao in \cite{PPY-sl2qp}.
	\end{abstract}

	\section{Introduction}

	In this paper, we consider the topological dymanics of    algebraic groups over a $p$-adically closed field. The model theoretic approach to topological dynamics was introduced by Newelski  \cite{Newelski-TD-gp-Act}, then
	developed by a number of papers, including \cite{Newelski-Ellis-Semigp},  \cite{Pillay-TD-DG} and \cite{Krupinski-DTD}, and now called definable topological dynamics. Definable topological dynamics studies the action of a group $G$ definable in a structure $M$ on its type space $S_G(M)$ and tries to link the invariants suggested by topological dynamics with model-theoretic invariants. For example, in the case when $Th(M)$ is stable, Newelski proved that the Ellis group of $S_{G,ext}(M)=S_G(M)$ is isomorphic to the  definable Bohr compactification $G^*/{G^*}^{00}$ of G, where $G^*$ is the interpretation of G in a saturated elementary extension \cite{Newelski-TDstable-gp}, which is another formulation of fundamental theorems of stable group theory by replacing the generic types with the Ellis group. Newelski tried to generalize this result to some tame unstable context where generic types may not exist, he conjectured in \cite{Newelski-TD-gp-Act} that such isomorphism holds ture for $NIP$ context. 
	
	A considerable amount of work was motivated by the Ellis group conjecture. The first counterexample of this conjecture is found in \cite{slr}, where the authors showed that Newelski’s conjecture fails in the case of $G=\SL(2,\r)$. G. Jagiella provided a range of counterexamples by extending results  to groups over $\r$ with compact-torsion-free decomposition in \cite{Jagiella-I}, and the second author of this paper showed that their results could be extended to any  elementary extension  $M$ of an $o$-minimal extension of the reals \cite{Yao-I}. Namely, the Ellis groups of $S_{G,ext}(M)$ is isomorphic to the Ellis groups of  $S_G(\r)$. The study of $\SL(2,\q[p])$ in \cite{PPY-sl2qp} provided another counterexample. Kirk showed in \cite{Kirk} that $\SL(2,\mathbb C((t)))$ is also a counterexample. In fact, The main results of \cite{slr, PPY-sl2qp, Kirk} showing that the Ellis group  corresponding to ${\SL}(2,M)$  is isomorphic to $B^*/{B^*}^{00}$,  where $B$ is the Borel subgroup of $\SL(2,-)$, when $M$ is $\r$, $\Q$, or $\mathbb C((t))$. A recent result in \cite{Jagiella-TD-NIP-Field} of G. Jagiella   showed that there is a onto homomorphism from the Ellis groups of $\SL_2(K)$ to $B^*/{B^*}^{00}$ if $K$ has $NIP$.

	As mentioned in \cite{Jagiella-Invs}, one may generalize these results to the case where $G$ has a ``nice'' decomposition in the $p$-adic setting. In this  paper, We provide a way of computing the Ellis groups for $p$-adic algebraic groups that admit a definable ``Iwasawa decomposition''. 
	We finally showed that:
	\begin{Thm}
		Let $G$ be a linear algebraic group defined in $\Q$. Suppose that $G(\Q)$ admits a Iwasawa decomposition $G=KB$ with $B$ a borel subgroup of $G$, definable over $\Q$, and $K$ an open compact subgroup of $G$. Then for any $M\succ \Q$  we have that the Ellis group of  $S_{G,ext}(M)$ is isomorphic to $B^*/{B^*}^{00}=B^*/{B^*}^{0}$.
		%dfg-fsg decomposition $G=BK=KB$, with $B$ an infinite $\Q$-definable dfg subgroup of $G$, and $K$ a $\Q$-definable fsg subgroup of $G$. Then the Ellis group of the $G(M)$-flow $S_{G,ext}(M)$ is isomorphic to $B(\M)/{B}^{0}(\M)$, where $\M\succ M$ is a saturated model.
	\end{Thm}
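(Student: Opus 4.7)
My plan is to reduce the computation for $G$ to that for the Borel subgroup $B$ by exploiting the Iwasawa decomposition $G=KB$, in the spirit of \cite{PPY-sl2qp}. The strategy splits naturally into three phases: producing a distinguished type that generates the minimal subflow, using the definable compactness of $K$ to push all of the dynamics onto the $B$-part, and finally computing the Ellis group of $B$ explicitly.

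First, I would construct an ``$f$-generic'' type $p^{*}\in S_{B}(M^{\ext})$. Its existence should follow from the definable amenability of $B^{*}$ (which holds because $B$ is solvable and $p$-adically closed fields are NIP), and its stabilizer should coincide with ${B^{*}}^{00}$. I would then establish that ${B^{*}}^{00}={B^{*}}^{0}$ for $B$ a Borel of a $p$-adic algebraic group, by decomposing $B$ as an extension of a $p$-adic torus by its unipotent radical and analysing each piece separately—both the torus and the unipotent radical satisfy ${H^{*}}^{00}={H^{*}}^{0}$ in this setting. Viewing $p^{*}$ as an element of $S_{G,ext}(M)$ via the inclusion $B\subseteq G$, the orbit closure $\overline{G(M)\cdot p^{*}}$ will be the candidate minimal subflow $\mathcal{M}_{G}$; using the Iwasawa identity $G(M)=K(M)B(M)$ together with the fact that $K(M)$ is topologically small, I would verify minimality by direct orbit computation.

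Second, I would show that every point of $\mathcal{M}_{G}$ has the form $k\cdot q$ with $k\in K(M)$ and $q$ a $B(M)$-translate of $p^{*}$, and then argue that $K(M)$ acts distally on $\mathcal{M}_{G}$: because $K$ is open and definably compact, two $K(M)$-translates of $p^{*}$ that are topologically close in the external type space must in fact coincide. This distality forces every idempotent of the Ellis semigroup $E(\mathcal{M}_{G})$ to act trivially on the $K$-coordinate, so nontrivial Ellis-group elements are parametrised exactly by $B(M)$-translates of $p^{*}$. The assignment $b^{*}{B^{*}}^{00}\longmapsto b^{*}\cdot p^{*}$ should then furnish a well-defined isomorphism between $B^{*}/{B^{*}}^{00}$ and the Ellis group of $\mathcal{M}_{G}$; combined with the equality ${B^{*}}^{00}={B^{*}}^{0}$ from the first phase, this yields the desired conclusion.

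The principal technical obstacle will be the distality argument for $K$ in the second phase. Making rigorous the passage from ``$K$ is definably compact'' to ``$K(M)$ acts equicontinuously on $\mathcal{M}_{G}$'' at the level of external types, so that no genuinely new idempotents arise from $K$-dynamics, is the heart of the proof and is precisely what explains the gap between the large group $G^{*}/{B^{*}}^{00}$ acting on $\mathcal{M}_{G}$ and the small Ellis group $B^{*}/{B^{*}}^{00}$. Once this is in place, the corollary for $\SL(n,M)$ and $\GL(n,M)$ reduces to computing $B^{*}/{B^{*}}^{0}$ for the standard upper-triangular Borel, giving $(\hat{\z}\times\Z^{*})^{n-1}$ and $(\hat{\z}\times\Z^{*})^{n}$ respectively.
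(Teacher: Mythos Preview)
Your outline has the right general shape, but several of the key steps do not go through as stated. First, the candidate minimal subflow $\overline{G(M)\cdot p^{*}}$ for a single $f$-generic $p^{*}\in S_{B}(M^{\ext})$ is not minimal in $S_{G}(M^{\ext})$; the paper has to multiply by a \emph{generic type} $q_{M}\in S_{K}(M^{\ext})$ with $q_{M}\vdash K^{0}$ and take $S_{G}(M^{\ext})*q_{M}*p_{M}$ instead. Relatedly, your structural claim that every point of $\mathcal{M}_{G}$ is of the form $k\cdot q$ with $k\in K(M)$ is false: when $M\succ \Q$ the set $K(M)$ is no longer compact, and points of the minimal subflow are products $r*s$ where $r$ is a genuine (non-realised) type in $S_{KA}(M^{\ext})$ and $s$ lies in a minimal subflow of $S_{H}(M^{\ext})$. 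The Ellis group is then $q_{M}*E_{B}^{M}$, not a set of $B(M)$-translates of $p^{*}$; your proposed map $b^{*}{B^{*}}^{00}\mapsto b^{*}\cdot p^{*}$ does not even land in the minimal subflow of $S_{G}(M^{\ext})$.

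The deeper gap is the mechanism by which the $K$-part collapses. Your ``distality/equicontinuity'' heuristic is not what does the work; the paper's replacement is an algebraic lemma (their Lemma~\ref{klem}) showing that any $H^{0}(M)$-invariant type $p$ concentrates on $\mu_{G}B$, i.e.\ if $g\models p$ then $g=\epsilon b$ with $\epsilon\in\mu_{G}$ and $b\in B$. The proof uses the standard-part map together with the crucial Borel fact $N_{G}(B_{u})=B$: one shows $B_{u}(M_{0})^{\st(k)}\subseteq B(M_{0})$, forcing $\st(k)\in B(M_{0})$ and hence $k\in\mu_{G}B$. This, combined with a companion lemma that conjugation by a type finitely satisfiable in $M_{0}$ preserves $\mu_{G}$, is what allows the computation $q_{1}*p*q_{M}*p'=q_{1}*l_{\delta}(p')$ (their Lemma~\ref{qpqp}) from which both minimality and the isomorphism $q_{M}*E_{B}^{M}\cong E_{B}^{M}\cong B/B^{0}$ follow. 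Without this self-normalising-Borel argument you have no way to prove that the $K$-contribution to the Ellis group is trivial, and your distality sketch gives no substitute for it.
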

	
	By \cite{Yao-Tri}, any algebraic group trigonalizable over $\Q$ has a global definable $f$-generic (dfg) type, and  by \cite{OP-plg} any definably compact group over $\Q$ has a global finitly satisfiable generic (fsg) type. So the above decomposition is a kind of ``fsg-dfg'' decomposition in the model-theoritic view when $B$ is split (trigonalizable) over $\Q$.

	%Note that by \cite{Yao-I}, the above theorem holds when $M$ is an $o$-minimal extension of a model of $RCF$. 
	
	A split reductive algebraic group $G$ over a local field $F$ admits a Iwasawa decomposition $G=KB$ with $B$ a borel subgroup  trigonalizable over $F$ and $K$ a maximal compact subgroup of $G$. We conclude directly from our main theorem that 
	\begin{Cor}
		If $G(M)$ is a split reductive algebraic group over $\Q$, then the Ellis group of   $S_{G,ext}(M)$ is isomorphic to $(\hat{\mathbb Z}\times \Z^*)^m$ for some $m\in \N$. Particularly, if $G$ is $\GL(n,M)$, then then Ellis group of the  $S_{G,ext}(M)$ is isomorphic to $(\hat{\mathbb Z}\times \Z^*)^n$.
	\end{Cor}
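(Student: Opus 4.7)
The plan is to apply the main theorem and then explicitly compute $B^*/(B^*)^0$ for a split Borel. First, invoke the classical fact from Bruhat--Tits theory: any split reductive algebraic group $G$ over $\Q$ admits an Iwasawa decomposition $G(\Q)=KB(\Q)$ with $K=G(\Z)$ (for an appropriate integral model) a maximal compact open subgroup and $B$ a Borel subgroup defined over $\Q$. The hypotheses of the main theorem are then satisfied, so the Ellis group of $S_{G,\ext}(M)$ is isomorphic to $B^*/(B^*)^0$.

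Next, reduce to the maximal torus. Write $B=T\ltimes U$ with $T\cong \Gm^m$ a maximal split torus and $U$ the unipotent radical. In characteristic $0$, the connected unipotent group $U$ admits a composition series with successive quotients $\Ga$, and $\Ga(M)=(M,+)$ is divisible and torsion-free, hence has no proper definable finite-index subgroups; by induction on the length of the composition series, $(U^*)^0=U^*$. Using the short exact sequence $1\to U^*\to B^*\to T^*\to 1$ together with the observation that every definable finite-index subgroup of $B^*$ must contain $U^*$, we conclude $B^*/(B^*)^0\cong T^*/(T^*)^0 \cong (\Gm^*/(\Gm^*)^0)^m$.

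It thus suffices to compute $\Gm^*/(\Gm^*)^0$ in the elementary extension $M$. The valuation gives a short exact sequence $1\to \o_M^*\to M^*\to \Gamma_M\to 0$ which splits via any uniformizer of $\Q$. The definable finite-index subgroups of $\Gamma_M$ are controlled by the subgroups $n\Z$ in the standard $\Z$, yielding $\Gamma_M/\Gamma_M^0\cong \hat{\mathbb Z}$, while the definable finite-index subgroups of $\o_M^*$ correspond to the open finite-index subgroups of $\Z^*$ (i.e., the profinite topology on $\Z^*$), giving $\o_M^*/(\o_M^*)^0\cong \Z^*$. So $\Gm^*/(\Gm^*)^0\cong \hat{\mathbb Z}\times \Z^*$, hence $T^*/(T^*)^0\cong (\hat{\mathbb Z}\times \Z^*)^m$, proving the first part of the corollary; the statement for $\GL(n)$ follows since the diagonal maximal torus has rank $n$. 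The main obstacle is the explicit computation of $\Gm^*/(\Gm^*)^0$: one must identify the definable finite-index subgroups of both $\o_M^*$ and $\Gamma_M$ in the extension $M$ and verify that the valuation sequence splits compatibly at the level of $(-)/(-)^0$; once done, everything else is essentially formal from the main theorem.
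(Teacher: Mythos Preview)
Your proposal is correct and follows essentially the same route as the paper: invoke the Iwasawa decomposition for split reductive groups, apply the main theorem to identify the Ellis group with $B/B^0$, reduce to the maximal split torus using $B_u\subseteq B^0$, and then compute $\Gm/\Gm^0\cong \hat{\z}\times \Z^*$. The only difference is that the paper simply cites Remark~2.5 of \cite{PPY-sl2qp} for this last computation rather than carrying it out via the valuation exact sequence as you do, and obtains $B_u\subseteq B^0$ from the description $H^0=S^0\rtimes B_u$ in \cite{Yao-Tri} rather than from your divisibility argument; both are fine and your version is slightly more self-contained.
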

	In \cite{Jagiella-Invs}, Jagiella showed that if $G$ is a definably connected group definable in an $o$-minimal expansion of a real closed field $M$, then the Ellis group of the flow $S_{G,\ext}(M)$ is abstractly isomorphic to a subgroup of a compact Lie group. Based on our result, we conjecture that
	\begin{Conj}
		Let $G$ be a  group definable in $p$-adic closed field $M$.  Ellis group of the flow
		$S_{G,\ext}(M)$ is abstractly isomorphic to a profinite group.
	\end{Conj}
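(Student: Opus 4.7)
The plan is to reduce the conjecture to the main theorem of the paper via a general Iwasawa-type decomposition, and then to verify that ${B^*}/{{B^*}^{00}}$ is profinite for every trigonalizable algebraic group $B$. Profiniteness is preserved under closed subgroups, quotients, and finite extensions, so the strategy is to build the Ellis group inductively from the atomic pieces $\Ga$ and $\Gm$, whose contributions can be computed directly from the paper's Corollary and a short additional calculation.

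First I would reduce to the case where $G$ is a connected linear algebraic group. By the structure theory of groups definable in $p$-adically closed fields (Pillay and subsequent work), the definably connected component $G^0$ has finite index in $G$ and is, up to isogeny, a linear algebraic group over $M$; finite extensions of profinite groups are profinite, so this reduction is harmless. Next, combining the Levi decomposition $G=L\ltimes U$ with the Iwasawa decomposition $L=K_0 B_0$ of the reductive part, one obtains $G=K(B_0 U)$ with $K$ open definably compact and $B:=B_0 U$ trigonalizable over $M$. A slight extension of the paper's main theorem --- sufficing to have $B$ trigonalizable rather than a Borel subgroup --- should identify the Ellis group of $S_{G,\ext}(M)$ with ${B^*}/{{B^*}^{00}}$.

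The remaining task is to show ${B^*}/{{B^*}^{00}}$ is profinite for every trigonalizable $B$. Decompose $B$ by a composition series whose factors are isomorphic to $\Ga$ or $\Gm$; the Corollary already supplies $\Gm^*/{\Gm^*}^{00}\cong \hat{\z}\times \Z^*$, which is manifestly profinite, and a direct analysis of ${\Ga^*}^{00}$ (identified with the maximal ideal of the valuation ring in the monster model) should yield a profinite quotient as well. One then climbs the composition series by induction, using that profiniteness is preserved under short exact sequences of compact Hausdorff groups. The hard part will be this inductive step: even when the end terms of a definable extension $1\to A\to B\to C\to 1$ have profinite Ellis quotients, the identity ${B^*}^{00}=\pi^{-1}({C^*}^{00})$ (with $\pi\colon B^*\to C^*$) need not hold a priori, so one must verify by hand that the associated extension of Ellis quotients is closed and totally disconnected. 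A secondary obstacle is producing the Iwasawa-type decomposition for an arbitrary connected linear algebraic group over $M$ rather than merely for split reductive groups, together with checking that the main theorem genuinely transfers to this broader setting.
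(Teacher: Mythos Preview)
The statement you are attempting to prove is presented in the paper as a \emph{Conjecture}, not as a theorem; the paper offers no proof of it whatsoever. It is motivated by analogy with Jagiella's $o$-minimal result and by the explicit computations for $\GL(n,-)$ and $\SL(n,-)$, but is left open. Consequently there is no ``paper's own proof'' to compare your proposal against.

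As to the proposal itself, you should be aware that it is an outline for attacking an open problem, and the gaps you flag are genuine obstacles rather than routine verifications. Two points deserve emphasis. First, your reduction step is too optimistic: it is \emph{not} known that an arbitrary group definable in a $p$-adically closed field is, up to finite index and isogeny, a linear algebraic group. The available structure theory (Hrushovski--Pillay, Onshuus--Pillay) gives local $p$-adic analytic structure and, in the definably compact case, a finite-index open subgroup that is a compact $p$-adic Lie group, but this falls well short of ``linear algebraic''. Second, even restricting to connected linear algebraic groups over $\Q$, an Iwasawa decomposition $G=KB$ with $K$ open compact and $B$ solvable is a feature of reductive groups, not of arbitrary groups; the paper's main theorem is stated under this hypothesis precisely because it is not automatic. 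Your Levi-decomposition workaround requires both that Levi factors exist over $\Q$ (which is fine in characteristic zero) and that the main theorem extends when $B$ is merely trigonalizable rather than a Borel, which you correctly identify as needing a separate argument. In short: your sketch is a reasonable plan of attack on an open conjecture, but several of its steps are themselves open or false as stated.
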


	the paper is organized as follows. In the rest of this introduction we recall some notations, definitions and results, from earlier papers, relevant to our results. 
	
	In section 2.1, we will study the minimal subflow and the Ellis group of a definable group $B=A\rtimes N$,  with $A$ a fsg group and $H$  a dfg group, in the $NIP$ environment.

	In Section 2.2 we will prove some general results for groups definable over $\Q$ admiting compact-dfg decomposition.

	In section 2.3, we prove the main results, on the minimal subflows and Ellis group of the action on $G(M)$ on its type space over $M^\ext$, where $M$ is an arbitrary $p$-adic closed field and $G$ a linear algebraic group admits Iwasawa decomposition, making use of the  results of Section 2.1 and 2.2.

	\subsection{Notations}

	%We fix a prime number $p$ and we let $\q[p]$ denotes the field of $p$-adic numbers.
	
	We will assume a basic knowledge of model theory. Good references are \cite{Pzt-book} and \cite{Maker-book}. Let $\mathbb T$ be a complete theory with infinite models. Its language is $L$ and $\M$ is the monster model, in which every type over a small subset  $A\subseteq \M$ is realized, where ``small" means $|A|<|\M|$.  $M,N, M', N'$ will denote small elementary submodels of $\M$.  By $x,y,z$ we mean arbitrary $n$-variables and $a,b,c\in \M$  the $n$-tuples in $\M^n$ with $n\in \N$. Every formula is an $L_\M$-formula. For an $L_M$-formula $\phi(x)$, $\phi(M)$ denotes the definable subset of $M^{|x|}$ defined by $\phi$, and a set $X\subseteq M^n$ is  definable  if  there is an $L_M$-formula $\phi(x)$  such that $X=\phi(M)$. If $\bar X\subseteq \M^n$ is definable, defined with parameters from $M$, then $\bar X(M)$ will denote $\bar X\cap M^n$, the realizations from $M$, which is clearly a definable subset of $M^n$. Suppose that $X\subseteq \M^n$ is a definable set, defined with parameters from $M$, then we write $S_X(M)$ for the space of complete types concentrating on $X(M)$. We use $\Def(X(M))$ the denote the boolean algebra of all $M$-definable subset of $X(M)$. We use freely basic notions of model theory such as definable type, heir, coheir, .... The book \cite{Pzt-book} is a possible source. Let $A,B$ be subsets of $\M$, and $p\in S(A)$, by $p\upharpoonright B$ we mean  the restriction of $p$ to $B$ if $A\supseteq B$, and $p|B$ the unique heir of $p$ over $B$ if $B\supseteq A$ with $A$ a model and $p$ definable.

	\subsection{Definable topological dynamics}
	
	Assume that $G$ is a group. By a (point-transitive) $G$-flow we mean a compact Hausdorff space $X$ together with a left action of $G$ on $X$ by homeomorphism that contains a dense orbit. A set $Y$ of $X$ is called a subflow if it is a closed subspace of $X$ which is closed under the action of $G$. A subflow flow is minimal if it has no proper subflows.  The minimal subflows  are “dynamically indecomposable” and considered to be the most fundamental $G$-flows. It is easy to see that $Y\sq X$ is minimal iff $Y=\cl{G\cdot y}$ for each $y\in Y$. For each $g\in G$, we consider the homeomorphism $\pi_g:\ X\rightarrow X$ induced by the group action. Let $X^X$ be the collection of all maps from $X$ to itself, equiped with the product topology, which is a compact Hausdorff space by Tychonoff’s theorem. Let $E(X)$ be the closure of the set $\{\pi_g|\ g\in G\}$ in $X^X$. Then $E(X)$ together with the operation $*$ of the function composition is a semigroup, and the group action of $G$ on $E(X)$ given by $g\cdot x=\pi_g*x$ makes $E(X)$ a $G$-flow. It is natural isomorphism to its own Ellis semigroup. For every $x\in X$ the closure of its $G$-orbit is exactly $E(X)(x) =\{f(x) :f\in E(X)\}$. 
	
	Every minimal subflow of $E(X)$ is a minimal left ideal of the semigroup $E(X)$, and homeomorphic to each other as $G$-flows. We sometimes use the phrase “minimal subflow of $E(X)$” to denote the homeomorphism class of minimal subflows of $E(X)$. A minimal subflow $I$ is the closure of the $G$-orbit of every $p\in I$, hence is $E(X)*p$. We call $u\in I$ an idempotent if $u*u=u$. We denote the collection of all idempotents of $I$ by $J(I)$. For any $u\in J(I)$, $(u*I,\ *)$ is a group with $u$ as its identity. We have $I$ is a disjoint union of $u*I$'s with $u\in J(I)$. All those groups are isomorphic to each other, even for different minimal left ideals. We call these groups the ideal groups and call their isomorphism class the Ellis group of the flow $X$. For more details, readers need to see Refs.\cite{mfate,Ellis}.
	
	Now we consider the topological dynamics in the model-theoretic context. Let $M$ be an $L$-structure.  Take a saturated elementary extension $\M$ of $M$. If $U\sq M^n$ is definable, by an externally definable subset $X$ of $U$ we mean a subset of $U$ of the form $Y\cap U$ with $Y$ an $\M$-definable subset of $\M^n$. By $X\sq_\ext U$ we mean $X$ is an externally definable subset of $U$. We write $\Def^\ext(U)$ for the the boolean algebra of  all externally definable subset of $U$, and $S_{U,\ext}(M)$ the space of all ultrafilters of $\Def^\ext(U)$. In model theory, we consider a definable group $G\sq M^n$ acting on its type space $S_{G}(M)$. Clearly $S_G(M)$ is a $G$-flow. By \cite{Newelski-TD-gp-Act}, the Ellis semigroup of $S_G(M)$ is $S_{G,\ext}(M)$, and the semigroup operation of $S_{G,\ext}(M)$ can be explicitly described. We call $S_{G,\ext}(M)$ the universial definable flow of $G$ over $M$.
	
	Let $M^\ext$ be an expansion of $M$ by adding  predicates for all externally definable subsets of $M^n$ with $n\in \N^+$, and $L^\ext_M$, the  associated language of $M^\ext$, is a nature expansion of the language $L$. If $\Th(M)$ has $NIP$ (see \cite{NIP-book} for the details of $NIP$), then $\Th(M^\ext)$ also has $NIP$,  admits quantifier elimination, and all types over $M^\ext$ are definable \cite{Shelah-NIP}. So we can identify  $S_{ext}(M)$ with $S(M^\ext)$ in $NIP$ context.  Let $S_{M, \text{fs}}(\M)$ be the space of global types which is finitely satisfiable in $M$, then the trace of $p$ in $M$, denoted by  $\TM(p)=\{\phi(M)|\ \phi(x)\in p\}$ is in $S_{\ext}(M)$, and it is easy to see that $p\mapsto \TM(p)$ is a homeomorphim between $S_{M, \text{fs}}(\M)$ and $S_{\ext}(M)$. In $NIP$ theories,  replacing $S_{ext}(M)$ by $S_G(M^\ext)$, we use $p\mapsto \TM(p)$ to denote the homeomorphism from $S_{M, \text{fs}}(\M)$ to $S(M^\ext)$, and $q\mapsto q^\M$ to denote the inverse map.
	
	We assume $NIP$ throughout this paper. Now we use the notation $S_G(M^\ext)$ instead of $S_{G,\ext}(M)$. The semigroup operatopn of $S_G(M^\ext)$ is defined as follows: For any $p,q\in S_G(M^\ext)$, $p*q=\{U\sq_\ext G|\ \{g\in M|\ g^{-1}U\in q\}\in p\}$ 
	
	Note that every type over $M^\ext$ is definable, and thus has a unique heir. By \cite{Newelski-TD-gp-Act}, $p*q$ can also be computered as follows: let $a\models p$ and $b\models q|(M^\ext,a)$, then $p*q=\tp{ab/M^\ext}$.
	
	%Suppose that  $E\sq G^2$ is an $M$-definable equivalence relation on $G$. For any $N\succ M$ and any $M$-definable subset $X\sq G$, by $X(N)/E$ we mean the set $\{x/E(N)|\ a\in X(N)\}$. We write $\Def(G(N)/E)$ for the boolean algebra of all sets  of the form $\{X(N)/E|\ X \in\Def(G(N)) \}$, and $S_{G/E}(N)$ is the space of all ultrafilters of $\Def(G(N)/E)$, similarly for $\Def^\ext(G(N)/E)$ and $S_{G/E}(N^\ext)$. 
	%Clearly, the action of $G(N)$ on $S_{G/E}(N^\ext)$ given by $g\star \tp((x/E)/N^\ext)=\tp((gx/E)/N^\ext)$ makes $S_{G/E}(N^\ext)$ a $G(N)$-flow. Assuming $NIP$, for each $p\in S_G(N^\ext)$, we define a mapping $\pi_p:S_{G/E}(N^\ext)\rightarrow S_{G/E}(N^\ext)$ via $\pi_p(q)=\tp((gh/E)/M^\ext)$ with $g\models p$ and $\tp(g/N^\ext,h)$ is finitely satisfiable in  $M$ and $h/E\models q$. A similarly argument as the proof of Lemma 2.2 in \cite{Yao-Long-DTR} shows that $E(S_{G/E}(N^\ext))=\{\pi_p|\ p\in S_G(N^\ext)\}$.

	\subsection{$NIP$, definable amenablity, and connected components}
	Let $G=G(\M^n)$ be a definable group. Recall that a type-definable over $A$ subgroup $H\sq G$ is a type-definable subset of $G$ over $A$, and also a subgroup of $G$. We say that $H$ has bounded index if $|G/H|<2^{|A|+|T|}$. If $\M$ has $NIP$, then there is a smallest type-definable subgroup  of bounded index (see \cite{Shelah-min-bounded-index}), we call it the type-definable connected component of $G$, and denote it by $G^{00}$. We call the intersection of all $\M$-definable subgroups of $G$ of finite index the definable connected component, and denote it by $G^0$. Clearly, both $G^{00}$ and $G^0$ are normal subgroups of $G$ and $G^{00}\leq G^0$.  Note that by \cite{CPS-Ext-Def-in-NIP}, $G^{00}$ is the same whether computed in $T$ or in $\Th(M^\ext)$ if $T$ has $NIP$.

	In \cite{Newelski-TD-gp-Act}, Newelski conjectured that $G/G^{00}$ is isomorphic to the Ellis group of $G$ in $NIP$ theories. Chernikov and Simon showed that the conjecture holds when $G$ is definably amenable and $NIP$. Briefly, a group is definably amenable if it admits a global (left) $G$-invariant Keilser measure, where a global Keisker on $G$ is a finitely additive probabilistic measure on the  algebra of all $\M$-definable subsets of $G$.

	We now recall the stability-theoretic notion of dividing: A type $p(x)\in S(B)$ divides over a set $A\subseteq B$ if there is a formula $\phi(x,b)\in p$ and infinite $A$-indiscernible sequence $b_0,b_1,b_2,...$ such that $\{\phi(x, b_i) : i<\omega\}$  is inconsistent. 
	
	A nice result of \cite{CS-Definably-Amenable-NIP-Groups} showing that:
	\begin{fct}
		$G$ is definably amenable iff there exists $p\in S_G(\M)$ such that for every $g\in G =G(\M)$, $gp$ does not divide over $M$. Following the notation of \cite{CS-Definably-Amenable-NIP-Groups} we call a type $p$ as in the right hand side a (global) strongly $f$-generic, over $M$, type of $G$.
	\end{fct}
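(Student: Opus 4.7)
The plan is to prove the two directions separately, exploiting the NIP hypothesis to pass between types and Keisler measures.

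For the direction $(\Leftarrow)$, I would start from a strongly $f$-generic type $p\in S_G(\M)$ over $M$. Since in NIP non-dividing over a model coincides with non-forking, and global non-forking extensions over $M$ are $\textnormal{Aut}(\M/M)$-invariant (and hence Borel-definable over $M$), every translate $gp$ is automatically $M$-invariant. The next step is to manufacture a $G$-invariant Keisler measure from this data. I would examine the orbit map $G\to S_G(\M)$, $g\mapsto gp$; because every value is $M$-invariant, this map factors through a canonical bounded-index quotient of $G$, which under NIP is a compact Hausdorff group. Pulling back the Haar measure of that compact group along the quotient map yields the desired $G$-invariant Keisler measure on $G$, so $G$ is definably amenable.

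For the direction $(\Rightarrow)$, given a global $G$-invariant Keisler measure $\mu$, I would try to extract a strongly $f$-generic type $p\in S_G(\M)$ from the support of $\mu$. The plan is to pick $p$ so that every formula in $p$ has positive $\mu$-measure; then for any $g\in G$, any $\phi(x,b)\in gp$, and any $M$-indiscernible extension $(b_i)_{i<\omega}$ of $b$, $G$-invariance of $\mu$ gives $\mu(\phi(x,b_i))=\mu(\phi(x,b))>0$ for all $i$, so only finitely many of the $\phi(x,b_i)$ can be jointly inconsistent (by subadditivity), and $gp$ does not divide over $M$.

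The hard part will be the extraction of a type in $(\Rightarrow)$: one needs a single $p$ that simultaneously witnesses non-dividing for every translate, which is non-trivial because the support of $\mu$ may be large and not every type in it need be translation-coherent. The standard remedy is to invoke the NIP measure theory of Hrushovski--Pillay--Simon, by which any invariant Keisler measure can be approximated by convex combinations of invariant types, and to choose $p$ from the support of a suitably generic component whose restrictions to translates remain absolutely continuous with respect to $\mu$. By comparison, the $(\Leftarrow)$ direction is comparatively mechanical once the right compact quotient is identified, though even there one must verify bounded index of the stabilizer via another appeal to Borel-definability and the existence of $G^{00}$ in NIP.
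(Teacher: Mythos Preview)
The paper does not prove this statement at all: it is recorded as a \emph{Fact}, explicitly attributed to Chernikov--Simon \cite{CS-Definably-Amenable-NIP-Groups}, and quoted without argument. So there is no ``paper's own proof'' to compare your proposal against; you have sketched a proof where the authors simply cite one.

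That said, one step in your $(\Rightarrow)$ direction is not right as written. You claim that for $(b_i)_{i<\omega}$ an $M$-indiscernible sequence, ``$G$-invariance of $\mu$ gives $\mu(\phi(x,b_i))=\mu(\phi(x,b))$''. But $G$-invariance only controls $\mu$ under left translation by elements of $G$, not under automorphisms of $\M$ fixing $M$; the $b_i$'s are conjugate over $M$ by automorphisms, not by group translations. What you actually need is that $\mu$ is $\textnormal{Aut}(\M/M)$-invariant for some small model $M$, and this is a separate (non-trivial) theorem in the NIP setting, due essentially to Hrushovski--Pillay and refined in \cite{CS-Definably-Amenable-NIP-Groups}. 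Once you have that, your subadditivity argument goes through and any type in the support of $\mu$ is strongly $f$-generic over $M$; the elaborate ``extraction'' you worry about is then unnecessary, since \emph{every} type in the support works. Your $(\Leftarrow)$ sketch is essentially the standard argument (the bounded orbit gives a compact quotient carrying Haar measure, which pushes forward to an invariant Keisler measure).
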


	Given a definable subset $X$ of $G$, we define $X$ to be
	$f$-generic if for some/any model $M$ over which $X$ is defined any left translate $gX$ of $X$
	does not divide over $M$. As the notation
	suggests, the property does not depend on the model $M$ chosen. Call a complete type $p$
	(over some set of parameters) $f$-generic iff every formula in $p$ is $f$-generic.  In \cite{CS-Definably-Amenable-NIP-Groups}, the authors showed that in $NIP$ theories:

	\begin{fct}\label{definably-amenable}
		If $\M$ has the $NIP$ and  $G\sq \M^n$ is a $A$-definable group, then  $G$ is definably amenable iff it  admits a global  $f$-generic type. Moreover, when $G$ is definably amenable, we have:
		\begin{enumerate} [label=(\roman*)]
			\item $p\in S_G(\M)$ is $f$-generic if and only if it is $G^{00}$-invariant;
			\item A type-definable subgroup $H$ fixing a global $f$-generic type is exactly $G^{00}$;
			\item Any global strongly $f$-generic type is $f$-generic;
			\item For any $M\prec\M$ containing $A$, if $E_G^\ext\subseteq S_G(M^\ext)$ is an Ellis group, then the map $\sigma: E_G^\ext\rightarrow G/G^{00}$ defined by $p\mapsto p/G^{00}$ is an isomorphism.
		\end{enumerate}
	\end{fct}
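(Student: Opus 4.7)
The plan is to handle the four items in a natural order: (iii) is essentially a definitional unwinding, (i) is the technical heart, (ii) follows by a stabilizer analysis, and (iv) assembles the earlier ingredients. For (iii), unwind strong $f$-genericity of $p$ over $M$ as: every left translate $gp$ does not divide over $M$. Given any $\phi(x,a) \in p$ and a small model $M_0 \supseteq M \cup \{a\}$, use that in $NIP$ dividing and forking coincide over models and that non-forking persists under enlarging the base; conclude that each $g\phi(x,a)$ does not divide over $M_0$, which is the model-independent formulation of $f$-genericity.

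For (i), the reverse direction ($G^{00}$-invariance implies $f$-genericity) exploits definable amenability: a $G^{00}$-invariant $p$ has $G$-orbit of size at most $|G/G^{00}|$, which is bounded, and combining this with an invariant Keisler measure on $G$ one shows every translate does not divide over a fixed small model. For the forward direction, show $\mathrm{Stab}(p)$ is a type-definable subgroup of bounded index: type-definability from $\mathrm{Stab}(p) = \bigcap_{\phi \in p} \mathrm{Stab}(\phi)$ with each factor $\M$-definable; bounded index from the fact that $f$-genericity provides a single small model $M$ over which all translates $gp$ do not fork, together with the observation that under $NIP$ only boundedly many global types are non-forking over a fixed small model. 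Any type-definable subgroup of bounded index contains $G^{00}$, giving $G^{00}$-invariance. For (ii), the inclusion $G^{00} \subseteq \mathrm{Stab}(p)$ is just (i); the reverse $\mathrm{Stab}(p) \subseteq G^{00}$ rests on the minimality of $G^{00}$ among type-definable subgroups of bounded index combined with a finer analysis of $f$-generic types, after which any type-definable $H$ fixing $p$ lies in $\mathrm{Stab}(p) = G^{00}$.

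For (iv), fix a minimal subflow $I \subseteq S_G(M^{\ext})$ and an idempotent $u \in J(I)$, and set $E = u * I$. The trace correspondence $q \mapsto q^{\M}$ identifies each $p \in E$ with a global type finitely satisfiable in $M$. Using definable amenability and the structure of minimal ideals, show each $p^{\M}$ is $f$-generic, hence $G^{00}$-invariant by (i); since $G/G^{00}$ is bounded, $p^{\M}$ concentrates on a unique coset $\sigma(p) \in G/G^{00}$. Verify that $\sigma(u) = eG^{00}$, and check the homomorphism property from the formula $p * q = \tp{ab/M^{\ext}}$ with $a \models p^{\M}$ and $b \models q^{\M} \mid (M^{\ext}, a)$: the class of $ab$ is the product of the classes of $a$ and $b$. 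Surjectivity follows because every coset $gG^{00}$ arises as $\sigma(u * p_g)$ for a suitable $p_g \in I$ obtained from the $G$-action on $I$.

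The main obstacle is injectivity of $\sigma$: we must rule out distinct elements of $E$ having the same coset image. This rests on the finer structure of minimal ideals in $S_G(M^{\ext})$ under $NIP$ and definable amenability: from $p = u * p$ and $\sigma(u) = eG^{00}$ together with the rigidity of $f$-generic types under Ellis multiplication, one shows that $\sigma(p)$ uniquely determines $p^{\M}$ modulo the specific action of $u$. Making this precise is the core content of Chernikov--Simon's argument, extending the Newelski--Pillay analysis from fsg groups to the general definably amenable $NIP$ setting, and this is the step where simple translation arguments alone are insufficient.
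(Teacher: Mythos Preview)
The paper does not prove this statement at all: it is recorded as a \emph{Fact} and attributed to Chernikov--Simon \cite{CS-Definably-Amenable-NIP-Groups}. There is therefore no ``paper's own proof'' to compare against; the authors simply import the result from the literature and use it as a black box throughout Section~2.

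Your outline is a reasonable high-level sketch of the Chernikov--Simon argument, and you are honest that the crux of (iv)---injectivity of $\sigma$---is exactly the deep content of their paper rather than something you can derive from the earlier items by soft methods. A couple of minor points: in (i), your ``reverse direction'' argument (bounded orbit plus invariant measure implies non-dividing) is not quite how Chernikov--Simon proceed; they work via the equivalence between $f$-genericity and weak genericity and the characterization of weakly generic types as those with bounded $G$-orbit. In (iv), before even discussing injectivity you need the fact that every almost periodic type (i.e., every element of a minimal subflow) is $f$-generic, which is itself one of the main theorems of \cite{CS-Definably-Amenable-NIP-Groups} and not something that follows from (i)--(iii) alone. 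So your sketch correctly locates where the real work lies, but since the paper never claims to supply a proof here, no further comparison is possible.
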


	Among the strongly $f$-generics, there are two extreme case: 
	
	\begin{enumerate}
		\item There is a small submodel $M$ such that every left $G$-translate of $p\in S_G(\M)$ is finitely satisfiable in $M$, we call such  types the fsg (finitely satisfiable generic);
		\item There is a small submodel $M$ such that every left $G$-translate of $p\in S_G(\M)$ is definable over $M$, we call such types the dfg (definable $f$-generic).
	\end{enumerate}
	Clearly, both fsg  and  dfg  groups are definably amenable. We now discuss these two cases. Let $\stb[l]{p}$ denotes the stabilizer of $p$ with respect to the left group action, and $\stb[r]{p}$ the stabilizer of $p$ with respect to the right group action. By  \cite{gmn} we have:

	%Let $X$ be a $G$-flow, $Y \subseteq X$ is called a left (right) generic set if finitely many left (right) translates of $Y$ cover $X$. If not specially noted, generic always means left generic. In the model-theoretic setting, we consider the $M$-definable group $G\sq \M^n$ and the $G$-flow $S_M(N)$ with $N\succ M$. We call a $G$-type $p \in S_G(N)$ is  generic if any $\phi \in p$,  $\phi(\M)$  is a generic subset of $G$.

	\begin{fct}\label{fsg}
		Let $G$ be an $A$-definable fsg group witnessed by a global type $p$ and a small model $M$. Then:
		\begin{enumerate}[label=(\roman*)]
			\item Any left (right) translate of $p$ is a global generic type and is finitely satisfiable in any small model $N\subseteq A$.
			\item $G^{00} = \stb[l]{p} = \stb[r]{p}$.
			\item Let $q$ be a global generic type, then $q$ is finitely satisfiable in any small model $N\subseteq A$ hence can be considered as a type in $S_G({N}^\ext)$. Moreover $q$ is a generic type in the $G(N)$-flow $S_G({N}^\ext)$.
			\item If $A\sq N$, then $G(N)$-flow $S_G(N^\ext)$ has a unique minimal subflow $ \textnormal{Gen}(G(N))$, which where  is the space of all generic types in $S_G(N^\ext)$.
			\item For any $A\sq N,N'\prec\M$,  $\Gen(G(N))$ is homeomorphic to $\Gen(G(N'))$ via $p\mapsto \TN(p^\M)$ (So we simply call it the generic type space of $G$, and denote it by $\Gen(G)$).
			
			\item $\i=\Gen(G)$ is a two-sided ideal.
		\end{enumerate}
	\end{fct}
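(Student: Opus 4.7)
The overall strategy is to handle (i) and (ii) as the structural core and derive (iii)--(vi) as formal consequences, using throughout that an fsg type is a strongly $f$-generic, so Fact~\ref{definably-amenable} is available and links $p$ to $G^{00}$.

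For (i), I would start from the definition: every left translate $gp$ is finitely satisfiable in $M$. The real work is upgrading ``finitely satisfiable in $M$'' to ``finitely satisfiable in every small $N\supseteq A$.'' Supposing some formula $\phi(x)$ in $gp$ over $N$ has no realization in $N$, I would build a Morley sequence in a coheir extension of $gp$ over $M$ and combine it with finite-satisfiability witnesses from $M$ to produce an alternation pattern contradicting $NIP$. Genericity of $gp$ is then automatic, since finite satisfiability over any small model implies non-dividing.

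For (ii), the inclusion $G^{00}\le\stb[l]{p}$ is Fact~\ref{definably-amenable}(i). Conversely $\stb[l]{p}$ is type-definable, and its index is bounded because all $G$-translates of $p$ lie in the space of global types finitely satisfiable in $M$, which has cardinality $\le 2^{|M|+|T|}$; by minimality of $G^{00}$ I get equality. For $\stb[r]{p}$, I would use the involution $q(x)\mapsto q(x^{-1})$, which swaps left and right translation and preserves fsg, reducing to the left case.

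Given (i) and (ii), clauses (iii)--(vi) are formal. In (iii), any global generic $q$ is a $G^{00}$-translate of $p$ by Fact~\ref{definably-amenable}(ii), so finite satisfiability of $q$ over $N$ follows from (i), and its trace is a generic in $S_G(N^\ext)$. In (iv), $\Gen(G(N))$ is closed and $G(N)$-invariant, and $G(N)$-orbits are dense in it (approximating $G^{00}$-translates inside $G(N)$), giving minimality; uniqueness follows since any minimal subflow must meet the generic locus by a standard compactness argument. For (v), $p\mapsto\TN(p^\M)$ is a continuous bijection between compact Hausdorff spaces with inverse $q\mapsto q^\M$, hence a homeomorphism. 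For (vi), $\i$ is a left ideal because the $S_G(M^\ext)$-action preserves genericity, and a right ideal via the symmetric form of (i) applied to right translates.

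The main obstacle is the $NIP$-alternation argument in (i) erasing the distinguished role of the witness model $M$; once that is accomplished, the rest reduces to standard manipulations in the Ellis semigroup together with the $G^{00}$-invariance supplied by Fact~\ref{definably-amenable}.
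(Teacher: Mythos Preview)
The paper does not prove this statement: it is recorded as a Fact and attributed to \cite{gmn}, so there is no in-paper argument to compare your sketch against. What you have written is a plausible outline of how the cited result is established, but two points deserve comment.

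First, your route to (i) is not the standard one and, as stated, is shaky. Finite satisfiability in the witness model $M$ does \emph{not}, under $NIP$ alone, upgrade to finite satisfiability in an arbitrary $N\supseteq A$; so an ``alternation'' argument that only uses $NIP$ and the coheir over $M$ is unlikely to close the gap. The argument in \cite{gmn} goes instead through the characterisation of genericity by finite covers: one shows that every formula $\phi(x,b)$ lying in some translate $gp$ is \emph{generic} in the sense that finitely many $G$-translates of $\phi$ cover $G$; this is a first-order condition on $b$, so it transfers to any model $N$ containing $b$, and then $\phi(N,b)\neq\emptyset$ because $G(N)$ is covered by finitely many $G(N)$-translates of it. That is the step that erases the distinguished role of $M$, not an $NIP$ combinatorial argument.

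Second, in (iii) your claim that every global generic $q$ is a $G^{00}$-translate of $p$ is stronger than what Fact~\ref{definably-amenable} gives and is not needed: once (i) is in place via the covering characterisation, every formula in any global generic $q$ is generic in the covering sense, hence realised in $N$, which already yields finite satisfiability of $q$ in $N$. The remaining clauses (iv)--(vi) are, as you say, formal consequences of (i)--(iii) together with the Ellis-semigroup machinery, and your outline there matches how these are typically derived.
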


	\begin{lem}\label{fsgms}
		Suppose that $G\sq \M^n$ is a group definable over $M$ admitting fsg and $\i\sq S_G(M^\ext)$ is the minimal subflow. Then
		\begin{enumerate}[label=(\roman*)]
			
			\item\label{fsgms.i} For any $q \in \i$, the Ellis group contains $q$ is $q * \i = q * S_G(M^\ext)$.
			\item\label{fsgms.ii} For any $q \in \i$ and $\tp{a/M^\ext},\tp{b/M^\ext}\in S_G(M^\ext)$, we have that $q*\tp{a/M^\ext}=q*\tp{b/M^\ext}$ iff $a/G^{00}=b/G^{00}$.
			\item \label{fsgms.iii} For each $q\in \i$ and $r\in S_G(M^\ext)$, there is $s\in q*\i$ such that $q=s*r$.
		\end{enumerate}
	\end{lem}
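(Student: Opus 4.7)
The plan is to apply standard Ellis semigroup theory to $\i$ together with the quotient map $\sigma\colon p\mapsto p/G^{00}$ from Fact \ref{definably-amenable}(iv). Let $u\in\i$ denote the unique idempotent with $q\in u*\i$ (such a $u$ exists because $\i$ decomposes as the disjoint union of its ideal groups), so that $u*q=q$. I first establish (\ref{fsgms.i}) by a semigroup calculation alone, then use $\sigma$ to handle (\ref{fsgms.ii}) and (\ref{fsgms.iii}).

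For (\ref{fsgms.i}), I show $q*\i=u*\i$. The inclusion $\subseteq$ follows from $q=u*q$: for any $r\in\i$, $q*r=u*(q*r)$, and $q*r\in\i$ because $\i$ is a two-sided ideal by Fact \ref{fsg}(vi), so $q*r\in u*\i$. For the reverse inclusion, $q$ has an inverse $q^{-1}$ in the group $u*\i$, and any $r\in u*\i$ can be written $r=u*r=q*(q^{-1}*r)\in q*\i$. To upgrade to $q*S_G(M^\ext)=q*\i$, the right-ideal property gives $q*r\in\i$ for any $r\in S_G(M^\ext)$, and then $q*r=u*(q*r)\in u*\i=q*\i$.

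For (\ref{fsgms.ii}), I would extend the $\sigma$ of Fact \ref{definably-amenable}(iv) to all of $S_G(M^\ext)$ by sending $r$ to the coset $cG^{00}$, where $c$ realizes the global lift $r^\mm$; this is well defined since $\tp{c/\mm}$ determines $cG^{00}$. Using the semigroup formula $p*r=\tp{cd/M^\ext}$ with $c\models p^\mm$ and $d\models r^\mm|(\mm,c)$, plus normality of $G^{00}$, I would check $\sigma(p*r)=\sigma(p)\sigma(r)$. The forward direction of (\ref{fsgms.ii}) then follows by applying $\sigma$ and cancelling $\sigma(q)$; for the converse, both products lie in the Ellis group $q*\i$ by (\ref{fsgms.i}), where $\sigma$ is an isomorphism by Fact \ref{definably-amenable}(iv) and hence injective.

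For (\ref{fsgms.iii}), using that $\sigma\colon q*\i\to G/G^{00}$ is a bijection, pick the unique $s\in q*\i$ with $\sigma(s)=\sigma(q)\sigma(r)^{-1}$. Then by (\ref{fsgms.i}) applied to $s$, $s*r\in s*S_G(M^\ext)=s*\i=q*\i$, and $\sigma(s*r)=\sigma(q)$; injectivity of $\sigma$ on $q*\i$ forces $s*r=q$. The only nonroutine step in the whole argument will be verifying multiplicativity $\sigma(p*r)=\sigma(p)\sigma(r)$ on all of $S_G(M^\ext)$; once that is in hand, the remainder reduces to standard Ellis-semigroup manipulations together with Fact \ref{definably-amenable}(iv).
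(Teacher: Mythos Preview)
Your proposal is correct and follows essentially the same approach as the paper. For (\ref{fsgms.i}) both you and the paper use that $\i$ is a two-sided ideal (Fact~\ref{fsg}(vi)) together with the group structure of $u*\i$; for (\ref{fsgms.ii}) and (\ref{fsgms.iii}) both arguments rest on the isomorphism $\sigma$ of Fact~\ref{definably-amenable}(iv), with the paper simply citing that fact where you spell out the extension of $\sigma$ to all of $S_G(M^\ext)$ and its multiplicativity (which is indeed routine, since $G^{00}$ is normal and the coset of a realization depends only on its type over the base).
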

	\begin{proof}
		\ref{fsgms.i}: let $u \in J(\i)$ be such that $q \in u * \i$. Since   $u * \i$ is a group, we have $u * \i = q * u * \i = q * \i$ is  the Ellis group containing $q$. Clearly, we have $q * \i \subseteq q * S_G(M^\ext) $.  Since $\i$ is a two-sided ideal, $u * S_G(M^\ext) \subseteq \i$, thus 
		\[q *S_G(M^\ext)=q * u * S_G(M^\ext) \subseteq q * \i.\] 
		So $q * \i = q * S_G(M^\ext)$
		as required. 
		
		\ref{fsgms.ii}: Since $q * S_G(M^\ext)$ is the Ellis group generated by $q$, we see that (ii) can be concluded directly from  Fact \ref{definably-amenable}(iv).
		
		\ref{fsgms.iii}: Suppose that $r\vdash k/G^{00}$ and $q\vdash t/G^{00}$. By Fact \ref{definably-amenable} there is $s\in q*\i$ such that $s\vdash (tk^{-1}/G^{00})$. Now $s*r\in s*\i=q*\i$ and $(s*r)/K^{00}=q/K^{00}$, so $s*r=q$ as required.
	\end{proof}

	We now discuss the dfg groups.
	
	\begin{fct}\label{dfg}\cite{P-Y-On-minimal-flows}
		Let $B\sq \M^m$ be a group definable over $M$, and $p\in S_B(\M)$ is a global $f$-generic type. If $p$ is definable over $M$, then  
		\begin{enumerate}[label=(\roman*)]
			\item Every left $G$-translate of $p$ is definable over $M$;
			\item $G^{00}=G^0=\stb[l]{p}$;
			\item $G\cdot p$ is closed, and hence a minimal subflow of $S_G(\M)$.
		\end{enumerate}
	\end{fct}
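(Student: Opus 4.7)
The plan is to establish (ii) first, deduce (i) from it, and conclude (iii).

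For (ii), the inclusion $G^{00}\subseteq\stb[l]{p}$ is immediate from Fact~\ref{definably-amenable}(i), since $p$ is $f$-generic and hence $G^{00}$-invariant. Conversely, because $p$ is $M$-definable, the condition $\varphi(x,y)\in gp$ unfolds as $\varphi(gx,y)\in p$ and is equivalent, via the defining schema of $p$, to an $M$-formula in the variables $(g,y)$. Thus
\[\stb[l]{p}=\bigcap_\varphi \bigl\{g\in G:\forall y\,(\varphi(x,y)\in p\leftrightarrow \varphi(gx,y)\in p)\bigr\}\]
is type-definable over $M$, and Fact~\ref{definably-amenable}(ii) then forces $\stb[l]{p}=G^{00}$. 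For the equality $G^0=G^{00}$, each set $H_\varphi$ occurring in the intersection is $M$-definable (though not necessarily a subgroup); using $G^{00}$-invariance of $p$ together with NIP and the group law, I would refine each $H_\varphi$ to an $M$-definable finite-index subgroup $H'_\varphi\leq G$ contained in $H_\varphi$. Intersecting yields $\bigcap_\varphi H'_\varphi\subseteq \stb[l]{p}=G^{00}$; as the left-hand side is an intersection of $M$-definable finite-index subgroups, it contains $G^0$. Hence $G^0\subseteq G^{00}$, giving equality.

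For (i), with $\stb[l]{p}=G^0=\bigcap_\varphi H'_\varphi$, the translate $gp$ depends only on the coset $gG^0$, and for each formula $\varphi$ the defining schema for $gp$ factors through the single $M$-definable coset $gH'_\varphi$, so it is given by an $M$-formula in $y$. Hence $gp$ is definable over $M$.

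For (iii), the orbit map $\pi: G/G^0\to S_G(\M)$, $gG^0\mapsto gp$, is well-defined by (ii). Endowing $G/G^0$ with its profinite (logic) topology, continuity of $\pi$ follows since the preimage of a basic open $\{q:\varphi(x,c)\in q\}$ is a union of cosets of $H'_\varphi$, hence clopen. As $G/G^0$ is compact, the image $G\cdot p=\pi(G/G^0)$ is compact and therefore closed in $S_G(\M)$, and being a single closed $G$-orbit it is a minimal subflow. The main obstacle is the refinement step yielding $G^0\subseteq G^{00}$: extracting a finite-index $M$-definable subgroup from each partial-stabilizer class is where the interplay of $f$-genericity, $M$-definability, and NIP is used most critically.
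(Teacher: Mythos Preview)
The paper does not prove this statement: it is recorded as a \emph{Fact} with a citation to \cite{P-Y-On-minimal-flows} and no argument is given. So there is no proof in the paper to compare your attempt against.

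That said, let me assess your outline on its own merits. Your reductions are sound: once $\stb[l]{p}=G^{00}=G^0$ is established, your argument for (i) is correct (since $H'_\varphi\subseteq H_\varphi=\mathrm{Stab}_\varphi(p)$, the $\varphi$-type of $gp$ genuinely depends only on the $M$-definable coset $gH'_\varphi$, and the quantifier trick $\{b:\exists z\in gH'_\varphi\, d_p\psi(z,b)\}$ gives an $M$-definition), and your compactness argument for (iii) is fine.

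The part you flag as ``the main obstacle'' is indeed the entire content of the result, and your sketch there is incomplete. You assert that each $H_\varphi$ can be refined to an $M$-definable finite-index subgroup $H'_\varphi$, but you do not say how; invoking ``$f$-genericity, $M$-definability, and NIP'' is not an argument. The clean way to finish is to pass to the compact group $G/G^{00}$ with its logic topology: each $H_\varphi=\mathrm{Stab}_\varphi(p)$ is $M$-definable and contains $G^{00}$, so $H_\varphi/G^{00}$ is a \emph{clopen} neighbourhood of the identity, and $\bigcap_\varphi H_\varphi/G^{00}=\{1\}$. A compact Hausdorff group in which the identity has a neighbourhood basis of clopen sets is totally disconnected, hence profinite; in a profinite group every clopen neighbourhood of the identity contains an open (hence finite-index) subgroup. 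Pulling back produces your $H'_\varphi$, and $M$-definability follows because the finitely many cosets are already separated by the $M$-definable sets $H_\varphi$. This is essentially the argument in the cited reference; without it, your proposal is an outline with its central step missing.
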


	\begin{fct}\label{B(Mext)-has-dfg}\cite{CPS-Ext-Def-in-NIP}
		Suppose that  $G\sq \M^n$ is a dfg group definable over $M$. Let $\j\sq S_G(M^\ext)$ be a minimal subflow. Then
		\begin{enumerate}[label=(\roman*)]
			\item  $G$ has a global dfg type with respect to $M^\ext$ in $\Th (M^\ext)$.
			\item  Let $N^*$ be a extension of $M^\ext$ and  $p\in \j$, then the unique heir $\bar p\in S_G(N^*)$  of $p$ is an $f$-generic type. Moreover any $G(N^*)$-translate of $\bar p$ is an global heir of some $q\in S_G(M^\ext)*p$.
		\end{enumerate}
	\end{fct}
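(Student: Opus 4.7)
The plan is to prove (i) first, establishing that the dfg property survives passage to $\Th(M^\ext)$, and then leverage (i) for (ii) via an explicit heir computation.

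For (i), start from a global dfg type $p_0 \in S_G(\M)$ in $\Th(M)$, which by hypothesis is definable over $M$. Fix a sufficiently saturated elementary extension $\M^*$ of $M^\ext$. The definition schema of $p_0$ consists of $L_M$-formulas, which in particular are $L^\ext_M$-formulas; applying the same schema inside $\M^*$ produces a global complete type $\tilde p \in S_G(\M^*)$. The main task is to verify that $\tilde p$ is $f$-generic in $\Th(M^\ext)$. Invoking the fact recalled earlier from \cite{CPS-Ext-Def-in-NIP} that $G^{00}$ is computed the same in $T$ and in $\Th(M^\ext)$, together with the equivalence between $f$-genericity and $G^{00}$-invariance from Fact \ref{definably-amenable}(i), the $G^{00}$-invariance of $p_0$ transfers to $\tilde p$ through the shared defining schema. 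Since left translation acts on the defining schema by parameter substitution, every left translate of $\tilde p$ remains definable over $M$, so $\tilde p$ is dfg in $\Th(M^\ext)$.

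For the first assertion in (ii), take $p \in \j$ and let $\bar p \in S_G(N^*)$ be its unique heir. By (i), $G$ is definably amenable in $\Th(M^\ext)$; combined with Fact \ref{definably-amenable}(iv), any minimal subflow of $S_G(M^\ext)$ is a disjoint union of Ellis groups, each isomorphic to $G/G^{00}$ via $p\mapsto p/G^{00}$, so in particular every type in $\j$ is $G^{00}$-invariant, that is, $f$-generic. The heir construction preserves $G^{00}$-invariance because the definition schema of $\bar p$ is inherited from that of $p$, so $\bar p$ is itself $f$-generic.

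For the moreover clause, given $g \in G(N^*)$, set $q := \tp{g/M^\ext} \in S_G(M^\ext)$. Pick $c \models \bar p$; since $g \in N^*$ and $c$ realizes the heir of $p$ over $N^*$, the restriction $\tp{c/M^\ext, g}$ is the heir of $p$ over $(M^\ext, g)$. By the explicit Ellis semigroup formula recalled in the preliminaries, we then have $q*p = \tp{gc/M^\ext}$. On the other hand, $g\bar p = \tp{gc/N^*}$ is definable over $M^\ext$ (its defining schema is obtained from that of $\bar p$ by $g$-substitution) and restricts to $q*p$ on $M^\ext$, whence it is the unique heir of $q*p$ to $N^*$. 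The principal obstacle lies entirely in (i): once dfg is seen to lift to $\Th(M^\ext)$, both claims of (ii) reduce to bookkeeping with defining schemas and the explicit semigroup product formula.
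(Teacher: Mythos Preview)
The paper does not supply a proof of this statement; it is recorded as a fact cited from \cite{CPS-Ext-Def-in-NIP}. I therefore assess your argument on its own.

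Your proof of (i) glosses over the essential difficulty. The defining schema of $p_0$ decides only $L$-formulas; applying that schema inside $\M^*$ yields a complete $L$-type, not a complete $L^\ext_{\M^*}$-type. Producing a canonical complete extension in the expanded language---and checking that it is still definable over $M^\ext$---is precisely the nontrivial content supplied by \cite{CPS-Ext-Def-in-NIP} (via honest definitions and quantifier elimination in $\Th(M^\ext)$). You have asserted this step rather than argued it.

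In (ii) there is a genuine logical gap. From the Ellis group isomorphism of Fact~\ref{definably-amenable}(iv) you infer that every type in $\j$ is $G^{00}$-invariant, but this does not follow. The map $p\mapsto p/G^{00}$ is defined on \emph{all} of $S_G(M^\ext)$ (every type concentrates on a unique $G^{00}$-coset), and its injectivity on a single ideal group $u*\j$ does not force $gp=p$ for $g\in G^{00}$: the translate $gp$ lies in $\j$ but possibly in a different ideal group $u'*\j$, where injectivity on $u*\j$ tells you nothing. To close this you would need $\j$ itself to be an Ellis group---but in this paper that is Lemma~\ref{Mini-flow-of-B}, which is proved \emph{using} the present fact, so invoking it is circular. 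The same unresolved point contaminates your ``moreover'' clause: the parenthetical justification ``$g$-substitution'' only gives definability of $g\bar p$ over $(M^\ext,g)$, not over $M^\ext$; for the latter you need Fact~\ref{dfg}(i) applied to the global heir of $p$, which presupposes that this heir is $f$-generic---the very thing at issue. A correct route runs through (i) and Fact~\ref{dfg}(iii) inside $\Th(M^\ext)$: the closed $G$-orbit of a global dfg type consists of $M^\ext$-definable $f$-generics, its restriction to $M^\ext$ is a minimal subflow whose heirs are $f$-generic, and one then transports this to an arbitrary minimal subflow $\j$.
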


	\begin{lem}\label{Mini-flow-of-B}
		Suppose that  $G\sq \M^n$ is a dfg group definable over $M$. Let $\j\sq S_G(M^\ext)$ be a minimal subflow. Then $\j$ is an Ellis subgroup of $S_B(M^\ext)$.
	\end{lem}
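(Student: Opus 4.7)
The plan is to show that the canonical map
\[
\sigma:\j\to G/G^{00},\quad q\mapsto q/G^{00},
\]
is injective on all of $\j$. Once this is in hand, Fact~\ref{definably-amenable}(iv), applied in the $NIP$ expansion $\Th(M^\ext)$ in which $G$ remains definably amenable by Fact~\ref{B(Mext)-has-dfg}(i), tells us that $\sigma$ is already a bijection from any Ellis subgroup $u*\j\subseteq\j$ onto $G/G^{00}$. Combining injectivity on $\j$ with bijectivity on $u*\j$ will immediately force $u*\j=\j$, so $\j$ is itself an Ellis group.

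First I would fix $p\in\j$ and use the minimality of $\j$ to write $\j=S_G(M^\ext)*p$. Let $\bar p\in S_G(N^*)$ be the unique heir of $p$ to a sufficiently saturated $N^*\succ M^\ext$; by Fact~\ref{B(Mext)-has-dfg}(ii), $\bar p$ is $f$-generic. Computing Newelski's semigroup product through heirs -- realizing the heir $\bar r\in S_G(N^*)$ of any $r\in S_G(M^\ext)$ by some $a\in N^*$ -- one obtains $\overline{r*p}=a\bar p$, so every $q\in\j$ has heir of the form $\bar q=g\bar p$ for some $g\in G(N^*)$.

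By Fact~\ref{definably-amenable}(i)-(ii), $\bar p$ has $\stb[l]{\bar p}=G^{00}$ and concentrates on a single coset $cG^{00}$ of $G^{00}$, so $\sigma(q)=gcG^{00}$ whenever $\bar q=g\bar p$. If $\sigma(q_1)=\sigma(q_2)$ with $\bar{q_i}=g_i\bar p$, then $g_1G^{00}=g_2G^{00}$, so $g_2^{-1}g_1\in\stb[l]{\bar p}$, giving $g_1\bar p=g_2\bar p$. Hence $\bar{q_1}=\bar{q_2}$, and uniqueness of heirs forces $q_1=q_2$, establishing the injectivity of $\sigma$ on $\j$.

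The main technical subtlety I anticipate is the identification $\overline{r*p}=a\bar p$: one has to carefully invoke the definability of types in $NIP$ and the precise heir-coheir description of $*$ recalled in Section~1.2, in order to justify that realizing the heir of $r$ by an element of $N^*$ transfers the product to a left translate of $\bar p$. Once this step is in place, the rest reduces to the clean stabilizer computation above.
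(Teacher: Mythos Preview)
Your proposal is correct and follows essentially the same route as the paper: both arguments lift $p\in\j$ to its heir $\bar p$ over a saturated $N^*\succ M^\ext$, use Fact~\ref{B(Mext)-has-dfg} to identify $\j$ with the orbit $G(N^*)\cdot\bar p$, and then invoke $\stb[l]{\bar p}=G^{00}$ to conclude that $q\mapsto q/G^{00}$ is a bijection $\j\to G/G^{00}$, whence $\j$ coincides with any Ellis subgroup it contains. One phrasing slip: you write ``realizing the heir $\bar r\in S_G(N^*)$ \dots\ by some $a\in N^*$'', but a non-realized type over $N^*$ cannot be realized in $N^*$; what you need (and clearly intend) is to realize $r$ itself by some $a\in G(N^*)$ and then quote Fact~\ref{B(Mext)-has-dfg}(ii) to see that $a\bar p$ is the heir of $r*p$.
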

	\begin{proof}
		Let $\M^*\succ M^\ext$ be a saturated extension. Let $p\in \j$  and $\bar p\in S_G(\M^*)$ the unique heir of $p$ over $\M^*$. By Fact \ref{B(Mext)-has-dfg}, the $G$-orbit $G\cdot \bar p$ is  homeomorphic to $\j$ via the map $\bar q\mapsto \bar q\upharpoonright M^\ext$. Since $\bar p$ is $G^{00}$-invariant, $G\cdot \bar p$ is  isomorphic to $G/G^{00}$ via $\bar q \mapsto \bar q/G^{00}$. So $q\mapsto q/G^{00}$ is also a ismomorphism from $\j$ to $G/G^{00}$. By Fact \ref{definably-amenable}, we see that $\j$ is an Ellis group.
	\end{proof}
	
	One could conclude directly from the above Fact that 
	\begin{cor}\label{structure of J}
		If $G$ has dfg, then for any $q_1,q_2\in S_G(M^\ext)$ and $p\in \j_M^\ext$, $q_1*p=q_2*p$ iff $q_1/G^0=q_2/G^0$.
	\end{cor}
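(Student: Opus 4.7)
The strategy is to combine three facts already established: by Lemma \ref{Mini-flow-of-B}, the minimal subflow $\j = \j_M^\ext$ is itself an Ellis group; by Fact \ref{dfg}(ii), $G^{00} = G^0$ in the dfg case; and by Fact \ref{definably-amenable}(iv), the map $\sigma : \j \to G/G^{00}$ sending $r \mapsto r/G^{00}$ is a group isomorphism. Combined, $\sigma$ takes the form $r \mapsto r/G^0$.

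The bridge I need to add is that the map $\pi : S_G(M^\ext) \to G/G^0$, $q \mapsto q/G^0$, is a semigroup homomorphism extending $\sigma$. Using the concrete description $q_1 * q_2 = \tp{ab/M^\ext}$ with $a \models q_1$ and $b \models q_2 \mid (M^\ext, a)$, together with the normality of $G^0$, one gets $\pi(q_1 * q_2) = (ab)G^0 = (aG^0)(bG^0) = \pi(q_1)\pi(q_2)$.

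With this in place, the corollary follows quickly. Fix $p \in \j$. Since $\j$ is a left ideal of $S_G(M^\ext)$, both $q_1 * p$ and $q_2 * p$ lie in $\j$, where $\sigma$ is a bijection. Therefore
\[
q_1 * p = q_2 * p \iff (q_1 * p)/G^0 = (q_2 * p)/G^0 \iff (q_1/G^0)(p/G^0) = (q_2/G^0)(p/G^0),
\]
and cancellation in the group $G/G^0$ converts the last equality into $q_1/G^0 = q_2/G^0$.

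The only point that deserves explicit care is the well-definedness of $\pi$ on all of $S_G(M^\ext)$, not just on $\j$. This is because $G/G^0$ is profinite with topology determined by the finite quotients $G/H$ for $H$ ranging over $M$-definable finite-index subgroups of $G$; a complete type over $M^\ext$ concentrates on a unique coset modulo each such $H$, and coherence across these assignments yields an element in the inverse limit. Beyond this bookkeeping I do not anticipate further obstacles.
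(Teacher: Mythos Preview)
Your argument is correct and is precisely the natural way to unpack what the paper leaves implicit (the paper gives no proof, only the remark that the corollary follows directly from Lemma~\ref{Mini-flow-of-B}). Your observation that $\pi:S_G(M^\ext)\to G/G^0$ is a semigroup homomorphism, together with the bijectivity of $\sigma=\pi|_{\j}$ from Lemma~\ref{Mini-flow-of-B} and Fact~\ref{definably-amenable}(iv), is exactly the intended route; the well-definedness of $\pi$ via the finite-index definable subgroups is the only bookkeeping needed, and you handle it correctly.
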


	\subsection{Groups definable in $(\Q,+,\times, 0,1)$}
	We first give our notations for $p$-adics. By ``the $p$-adics'', we mean the field $\Q$. $M_0$ denotes the structure $(\Q,+,\times, 0,1)$, $\Q^*=\Q\backslash \{0\}$ is the multiplicative group. $\mathbb Z$ is the ordered additive group of integers, the value group
	of $\Q$. The group homomorphism $\nu: \Q^*\longrightarrow \mathbb Z$ is the valuation map.  $\M$ denotes a very saturated elementary extension $(\K, +, \times, 0, 1)$ of $M_0$.  Similarly, $\K^*=\Q\backslash \{0\}$ is the multiplicative group. We sometimes write $\Q$ for $M_0$ and $\K$ for $\M$.
	
	For convenience, we use $\Ga$ and $\Gm$ denote the additive group and multiplicative group of field $\M$ respectively.  So $\Ga(M_0)$   and $\Gm(M_0)$ (or $\Gm(\Q)$) are $(\Q,+)$ and $(\Q^*,\times)$ respectively.
	
	We will be referring a lot to the comprehensive survey \cite{Luc-Belair} for the basic model theory of the $p$-adics. A key point is Macintyre's theorem \cite{Macintyre} that $\Th(\Q,+,\times, 0,1)$ has quantifier elimination in the language of rings $L_{ring}$ together with new predicates $P_n(x)$ for the $n$-th powers for each $n\in \N^+$. Moreover, for any polynormals $f,g\in \Q[\bar x]$, the relation $v(f(\bar x))\leq v(g(\bar x))$ is quantifier-free definable in the Macintyre's language $L_{ring}\cup\{P_n|\ n\in \N^+\}$, in particular it is definable in the language of rings. (See Section 3.2 of \cite{Luc-Belair}.) By \cite{Delon-stbebd-in-Qp}, every type over $\Q$ is definable. A \emph{$p$-adically closed field} is a model of $p\mathrm{CF} := \Th(\Q)$, which has $NIP$  (see \cite{Luc-Belair} for details).  The theory $p$CF also has definable Skolem functions \cite{Dries-defskl}. %Forany $N\models p\mathrm{CF}$, $\o(N)$ will denote the valuation ring, and$\Gamma_N$ will denote the value group.
	
	The $p$-adic field $\Q$ is a locally compact topological field, with  basis given by the sets
	\[{\cal B}(a,n) = \{x\in \Q \mid x \neq a \wedge v(x - a) \geq n\}\]
	for $a\in \Q$ and $n\in \mathbb Z$. The valuation ring $\z[p]$ is
	compact. The topology is given by a definable family of a definable sets, so it
	extends to any $p$-adically closed field $M$, making $M$ be a
	topological field (usually not locally compact).

	For any $X\sq \Q^n$, the ``topological dimension", denoted by $\dim(X)$, is the greatest $k\leq n$ such that the image of $X $ under some projection from $M_0^n$ to $M_0^k$ contains an open subset of $\Q^k$. On the other side, as model-theoretic algebraic closure coincides with field-theoretic algebraic closure  (\cite{HP-gp-local}, Proposition 2.11), we see that for any model  $M$ of $\pcf$ the algebraic closure satisfies exchange (so gives a so-called pregeometry on $M$) and there is a finite bound on the sizes of finite sets in uniformly definable families. If $a$ is a finite tuple from $M\models \pcf$ and $B$ a subset of $M$ then the algebraic dimension of $a$ over $B$, denoted by $\dim(a/B)$, is the size of a maximal subtuple of $a$ which is algebraically independent over $B$. 
	
	When  $X\sq \Q^n$  is definable, the algebraic dimension of $X$, denoted by $\alg$-$\dim(X)$, is the maximal $\dim(a/B)$ such that $a\in X(\M)$ and $B$ contains the parameters over which  $X$ is defined. It is important to know that when  $X\sq \Q^n$  is definable, then its algebraic-dimension  coincides with its ``topological dimension'', namely $\dim(X)=\alg$-$\dim(X)$. As a conclusion, for any definable $X\sq \Q^n$, $\dim(X)$ is exactly the algebraic geometric dimension of its Zariski closure.
	
	By a definable manifold $X\sq \Q^n$ over a subset $A\sq \Q$, we mean a  topological space $X$ with a covering by finitely many open subsets $U_1,...,U_m$, and homeomorphisms  of $U_i$ with some definable open $V_i\sq  \Q^n$ for $i=1,...,m$, such that the transition maps are $A$-definable and  continuous. If the transition maps are $C^k$, then we call $X$ a definable $C^k$ manifold over $\Q$ of dimension $n$.   A definable group $G\sq \Q^n$ can be equipped uniquely  with the structure of a definable manifold over K such that the group operation is $C^{\infty}$ (see \cite{Pillay-fields-definable-over-Qp} and \cite{OP-plg}). The facts described above work for any $M\models \pcf$.

	By \cite{OP-plg}, a group $K\sq \M^l$ definable over $M_0$ has fsg iff it is definably compact over $M_0$. The type-definable connected component $K^{00}$ coincides with its definable connected component $K^{0}$, which is also the kernel of the standard part map $\st : K\rightarrow K(M_0)$. Namely, $K^0$ is the set of infinitesimals of $K$ over $M_0$.

	By \cite{PY-dfg}, a group $H\sq \M^k$ definable over $M_0$ has dfg iff there is a  trigonalizable algebraic group $A$ over $M_0$ and a definable homomorphic $f: H\rightarrow A$ such that both $ker(f)$ and $A/im(f)$ are finite. In particular, any  trigonalizable algebraic group over $M_0$ has dfg.

	\section{Main Results}
	
	\subsection{Semi-product of a fsg group and a dfg group}
	
	We now consider a group $B=B(\M)$ definable in a $NIP$ structure $M$, which could be decomposed into  a semi-product $B=A\rtimes H$, where $A$ has fsg, $H$ has dfg, and both of the definable over $M$. Clearly, $B$ is definably amenable. We will study the the minimal subflow and the Ellis group of $S_B(M^\ext)$ in this section.

	\begin{lem}\label{B0=B00}
		$B^{00}=A^{00}\rtimes H^0$.
	\end{lem}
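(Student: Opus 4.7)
The plan is to establish the equality by double inclusion, relying on the universal property of $B^{00}$ as the smallest type-definable subgroup of $B$ of bounded index.

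For the inclusion $A^{00}\rtimes H^0 \subseteq B^{00}$, I would observe that the intersection $A\cap B^{00}$ is a type-definable subgroup of $A$ with $[A:A\cap B^{00}]\leq [B:B^{00}]$, which is bounded, so by minimality of $A^{00}$ we get $A^{00}\subseteq A\cap B^{00}\subseteq B^{00}$. The same argument applied to $H$ yields $H^{00}\subseteq B^{00}$; since $H$ has dfg, Fact \ref{dfg}(ii) gives $H^0=H^{00}$, so $H^0\subseteq B^{00}$. Because every element of $B=A\rtimes H$ decomposes uniquely as a product $a\cdot h$ with $a\in A$ and $h\in H$, the subset $A^{00}\rtimes H^0$ coincides with the set product $A^{00}\cdot H^0\subseteq B^{00}$.

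For the reverse inclusion it suffices to verify that $A^{00}\rtimes H^0$ is a type-definable subgroup of $B$ of bounded index, so that minimality of $B^{00}$ yields $B^{00}\subseteq A^{00}\rtimes H^0$. Since in the semi-direct product $B=A\rtimes H$ the conjugation action of $H$ on $A$ is by definable group automorphisms, and $A^{00}$ is the unique smallest type-definable subgroup of $A$ of bounded index (hence characteristic in $A$), each such automorphism preserves $A^{00}$, so $H$ normalizes $A^{00}$. Thus $A^{00}\rtimes H^0$ is a subgroup of $B$; it is type-definable as the product of a type-definable piece and a definable one, and its index in $B$ equals $[A:A^{00}]\cdot[H:H^0]$, with both factors bounded. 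The only nontrivial step is this verification that $H$ normalizes $A^{00}$, which is essentially just the characteristic property of $G^{00}$; the rest of the proof is bookkeeping with the universal properties of the two connected components and the equality $H^{00}=H^0$ for dfg groups.
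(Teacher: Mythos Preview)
Your proof is correct and follows essentially the same double-inclusion strategy as the paper: both argue that $A^{00}\subseteq B^{00}$ and $H^{00}=H^0\subseteq B^{00}$ via bounded-index intersections, and that $A^{00}\rtimes H^0$ is a type-definable bounded-index subgroup so contains $B^{00}$. You are in fact more careful than the paper on one point: you explicitly justify that $A^{00}\rtimes H^0$ is a subgroup by noting that $A^{00}$ is characteristic in $A$ and hence normalized by $H$, a step the paper leaves implicit.
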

	\begin{proof}
		Since $A/A^{00}$ and $H/H^{0}$ is bounded, we see that $B/(A^{00}\rtimes H^0)$ is bounded. So $B^{00}\sq A^{00}\rtimes H^0$. On the other side,  $A/(B^{00}\cap A)\cong AB^{00}/B^{00}$ and $H/(B^{00}\cap H)\cong HB^{00}/B^{00}$. So  $B^{00}\cap A$ has bounded index in $A$ and $B^{00}\cap H$ has bounded index in $H$, which imples that $A^{00}\sq B^{00}$ and $H^{0}=H^{00}\sq B^{00}$. So $A^{00}\rtimes H^0\sq B^{00}$.
	\end{proof}

	\begin{lem}\label{IA*JH}
		Let $\ia$ be the (unique) minimal subflow of $S_A(M^\ext)$ and $\jh$ be a minimal subflow of $S_H(M^\ext)$, then $\ia*\jh$ is a minimal subflow of $S_B(M^\ext)$. Moreover if $u\in \ia$ and  $v\in\jh$ are idempotents, then  $u*v$  is an idempotent.
	\end{lem}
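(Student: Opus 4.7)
The plan has three parts. For the \textbf{$B$-invariance} of $\ia * \jh$: given $c \in B$ with semidirect decomposition $c = a_c h_c$ and $p \in \ia$, $q \in \jh$, pick realizations $a \models p$ and $b \models q|M^\ext a$, so $ab \models p*q$. By normality of $A$,
\[cab = a_c h_c a b = a_c (h_c a h_c^{-1}) h_c b = (a_c \cdot {}^{h_c}a)(h_c b),\]
the product of an $A$-element and an $H$-element. The type of the $A$-factor is $a_c \cdot {}^{h_c}p$, which lies in $\ia$ because $\ia = \Gen(A)$ is preserved under any $M$-definable group automorphism of $A$ (in particular conjugation by $h_c \in H(M)$) and under left $A$-translation. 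The $H$-factor has type $h_c \cdot q \in \jh$ by $H$-invariance of the subflow. A heir-transitivity check confirms $c \cdot (p*q) = (a_c \cdot {}^{h_c}p) * (h_c q) \in \ia * \jh$.

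For the \textbf{idempotency of $u*v$}: expand $(u*v)*(u*v)$ via realizations $a_1 \models u$, $b_1 \models v|M^\ext a_1$, $a_2 \models u|M^\ext a_1 b_1$, $b_2 \models v|M^\ext a_1 b_1 a_2$, and rewrite $a_1 b_1 a_2 b_2 = (a_1 \cdot {}^{b_1}a_2)(b_1 b_2)$ via normality. The $H$-factor has type $v*v = v$ because $b_2 \models v|M^\ext b_1$. For the $A$-factor, I first observe that ${}^{b_1}u := \TM({}^{b_1}u^\M)$ lies in $\ia$: the global lift ${}^{b_1}u^\M$ remains $A^{00}$-invariant since $A^{00}$ is stable under all conjugations, hence is $f$-generic, hence is finitely satisfiable in $M$ by the fsg property of $A$. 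Then I use Lemma \ref{fsgms} together with the fact that $u$ is the identity of its Ellis subgroup $u*\ia$ to identify $\tp(a_1 \cdot {}^{b_1}a_2 / M^\ext) = u$. Heir transitivity then yields $(u*v)^2 = u*v$.

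For \textbf{closedness and minimality}, the plan is to show $\ia * \jh = \overline{B \cdot (u*v)}$. Closedness of $\ia * \jh$ follows from $\ia * \jh = (\ia * v) * \jh$, using that $v$ is the identity of the Ellis group $\jh$ (Lemma \ref{Mini-flow-of-B}) and that $\ia * v$ is the closed continuous image of $\ia$. The inclusion $\overline{B \cdot (u*v)} \subseteq \ia * \jh$ is then immediate from the first part. For the reverse, given $p'*q' \in \ia * \jh$ I would approximate by $B$-translates of $u*v$: pick a net $h_\alpha \in H$ with $h_\alpha v \to q'$ (density of $H \cdot v$ in the minimal flow $\jh$), then for each $\alpha$ a net $a_\alpha \in A$ with $a_\alpha \cdot {}^{h_\alpha}u \to p'$ (density of $A \cdot {}^{h_\alpha}u$ in $\ia$), and invoke left-continuity of $*(h_\alpha v)$ to place $p' * (h_\alpha v)$ in $\overline{B \cdot (u*v)}$.

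The main obstacle is passing from $p' * (h_\alpha v) \to p' * q'$, since Ellis multiplication is not right-continuous in general. To resolve this I would exploit the strong $f$-genericity of $p' \in \ia$ (its global lift is finitely satisfiable in $M$) together with the $H^0$-invariance of the types in $\jh$, which combined with the definability of all types over $M^\ext$ in the NIP setting yields the required continuity on this restricted domain. Once the orbit-closure identity is established, minimality of $\ia * \jh$ follows because the idempotent $u*v$ it contains generates the whole orbit closure by the semigroup structure.
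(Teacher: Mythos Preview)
Your approach differs substantially from the paper's, and contains gaps that are not easy to close.

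First, a structural issue: you treat $A$ as the normal factor in $B=A\rtimes H$, but in the paper's setup (made explicit in Section~2.3 and signalled in this very proof by ``Since $H^0$ is a normal subgroup of $B$'') it is $H$ that is normal. The paper rewrites $a_1h_1a_2h_2=a_1a_2\cdot h_1^{a_2}h_2$ with $h_1^{a_2}\in H^0$, and then uses that the heir $\tpo{h_2}{N^*}$ of the dfg type $v$ is $H^0(N^*)$-invariant to get $\tpo{h_1^{a_2}h_2}{N^*}=\tpo{h_2}{N^*}$, whence $(u*v)^2=\tpo{a_1a_2}{M^\ext}*v=u*u*v=u*v$. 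Your dual rewriting $(a_1\cdot{}^{b_1}a_2)(b_1b_2)$ pushes the correction term into the $A$-factor instead, and then you must show $\tpo{a_1\cdot{}^{b_1}a_2}{M^\ext}=u$. But ${}^{b_1}a_2$ involves conjugation by $b_1\notin M$, so $\tpo{{}^{b_1}a_2}{M^\ext,a_1}$ has no reason to be the heir of its restriction to $M^\ext$, and the factorisation as $u*(\text{something})$ needed to invoke Lemma~\ref{fsgms} is not available. The paper's choice of which factor is normal is not incidental: it is precisely what lets the dfg invariance absorb the commutator, since $H(N^*)$-translates of the dfg heir remain heirs over $M^\ext$ (Fact~\ref{B(Mext)-has-dfg}), while no analogous statement holds on the fsg side.

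Second, your minimality argument is topological where the paper's is algebraic, and it does not close. You correctly identify the obstacle---passing from $p'*(h_\alpha v)$ to $p'*q'$ requires left-continuity of $*$, which fails in general---but the appeal to ``strong $f$-genericity of $p'$ together with $H^0$-invariance'' does not supply it. Moreover, even if you established $\overline{B\cdot(u*v)}=\ia*\jh$, this is the orbit closure of one point and does not yield minimality; an idempotent generating a closed left ideal need not generate a \emph{minimal} one. The paper instead shows directly that $\ia*\jh\subseteq S_B(M^\ext)*r*s$ for \emph{every} $r\in\ia$, $s\in\jh$: given $\alpha\in\ia$, $\beta\in\jh$, it picks $a\in A$ with $\alpha=\tpo{a}{M^\ext}*r$, then chooses $h_0\in H$ so that the conjugate $h_0^{a'}$ (with $a'\models r|(M^\ext,a,h_0)$) lands in the correct $H^0$-coset, and computes $\tpo{ah_0}{M^\ext}*r*s=\alpha*\beta$, again using the heir property of dfg translates. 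No limits are taken.
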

	\begin{proof}
		Let $N^*\succ M^\ext$ be an $|M|^+$-saturated model.
		Clearly, $u\vdash A^{00}$ and $v\vdash H^0$ since they are idempotents. Let $a_1,a_2,h_1\in B(N^*)$ such that $a_1\models u$, $h_1\models v|(M^\ext,a_1)$, and $a_2\models u|(M^\ext,a_1,h_1)$, let $h_2\models v|N^*$, then
		\[
		u*v*u*v=\tpo{a_1h_1a_2h_2}{M^\ext}=\tpo{a_1a_2h_1^{a_2}h_2}{M^\ext}
		\]
		Since $H^0$ is a normal subgroup of $B$, we see that $h_1^{a_2}\in B^{00}(N^*)$. By Fact \ref{B(Mext)-has-dfg},  $\tpo{h_2}{N^*}$ is $B^{00}(N^*)$-invariant, so $\tpo{h_2}{N^*}=\tpo{h_1^{a_2}h_2}{N^*}$, which implies that 
		\[
		\tpo{a_1a_2h_1^{a_2}h_2}{M^\ext}=\tpo{a_1a_2}{M^\ext}*\tpo{h_1^{a_2}h_2}{M^\ext}=u*u*v=u*v.\]
		So $u*v$ is an idempotent. 
		
		We now show that $\ia*\jh$ is minimal. It suffices to show that $\ia*\jh\sq S_B(M^\ext)*r*s$ for any $r\in \ia$ and $s\in \jh$. Let $\alpha\in \ia$ and $\beta\in \jh$, then there are $a\in A(N^*)$ and $h\in H(N^*)$ such that $\alpha=\tpo{a}{M^\ext}*r$ and $\beta=\tpo{h}{M^\ext}*s$. Let $a'\in A(N^*)$ realize $r|(M^\ext, a, h_0)$ and $h_0\in H(N^*)$  such that $h_0^{a'}/H^0=h/H^0$. Let $h'\models s| N^*$.  Then
		\[
		\tpo{ah_0}{M^\ext}*r*s= \tpo{ah_0a'h'}{M^\ext}=\tpo{aa'h_0^{a'}h'}{M^\ext},
		\]
		and, since $\tp{h_0^{a'}h'/N^*}$ is the heir of $\tp{h_0^{a'}h'/M^\ext}$ by Fact \ref{B(Mext)-has-dfg}, it is easy to see that
		\begin{align*}
			\tpo{aa'h_0^{a'}h'}{M^\ext}&=\tpo{aa'}{M^\ext}*\tpo{h_0^{a'}h'}{M^\ext}\\
			&=\tpo{a}{M^\ext}*\tpo{a'}{M^\ext}*\tpo{h_0^{a'}h'}{M^\ext}\\
			&=\tpo{a}{M^\ext}*\tpo{a'}{M^\ext}*\tpo{hh'}{M^\ext}\\
			&=\alpha*\beta.
		\end{align*}
		This completes the proof.
	\end{proof}

	\begin{lem}\label{Ellis-group-in-B}
		Let $\ia$ and $\jh$ be minimal subflows of $S_A(M^\ext)$ and $S_H(M^\ext)$ respectively. Let $u\in \ia$ and $v\in \jh$ be idempotents. Then $u*\ia*\jh$ is the Ellis group in $S_B(M^\ext)$ generated by $u*v$.
	\end{lem}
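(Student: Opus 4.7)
The plan is to identify $u*\ia*\jh$ with the Ellis group $E:=(u*v)*(\ia*\jh)$ at $u*v$ inside $I:=\ia*\jh$, which is a minimal subflow by Lemma~\ref{IA*JH}. Fact~\ref{definably-amenable}(iv), applied to the definably amenable $B$, together with Lemma~\ref{B0=B00} and Lemma~\ref{Mini-flow-of-B}, gives that $E$ bijects via $p\mapsto p/B^{00}$ with $B/B^{00}$ of cardinality $|A/A^{00}|\cdot|H/H^0|$. It therefore suffices to prove (i) $|u*\ia*\jh|=|B/B^{00}|$ and (ii) $u*\ia*\jh\subseteq E$, which together will force equality.

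For (i), Lemma~\ref{Mini-flow-of-B} gives that $s\mapsto s/H^0$ is a bijection $\jh\to H/H^0$, and Lemma~\ref{fsgms}(ii) gives that $u*r_1=u*r_2$ iff $r_1/A^{00}=r_2/A^{00}$. Since the $H$-marginal of $u*r*s$ inside $S_B(M^\ext)$ equals $s$, these combine to show that $u*r_1*s_1=u*r_2*s_2$ iff $r_1/A^{00}=r_2/A^{00}$ and $s_1=s_2$, yielding a bijection $A/A^{00}\times H/H^0\to u*\ia*\jh$.

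For (ii), the key step is to show $(u*v)*(u*r*s)=u*r*s$ for every $r\in\ia$, $s\in\jh$. I would take an iterative heir chain $a_0\models u$, $h_0\models v|(M^\ext,a_0)$, $a_1\models u|(M^\ext,a_0,h_0)$, $z\models r|(M^\ext,a_0,h_0,a_1)$, $w\models s|(M^\ext,a_0,h_0,a_1,z)$, so that $(u*v)*(u*r*s)=\tp{a_0h_0a_1zw/M^\ext}$. Using the relation $ha=ah^a$ in $B=A\rtimes H$ (under the paper's convention $H$ is normal in $B$ and $h^a:=a^{-1}ha\in H$), the element rewrites as $(a_0a_1z)\cdot(h_0^{a_1z}w)$. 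The $A$-factor $a_0a_1z$ realizes $u*u*r=u*r$ by iteration of heirs and idempotency of $u$. For the $H$-factor, $h_0^{a_1z}\in H^0$ since $h_0\in H^0$ and $H^{00}=H^0$ is normal in $B$; Fact~\ref{B(Mext)-has-dfg} together with Fact~\ref{definably-amenable}(i) gives that the global $f$-generic heir $\bar s$ of $s$ is $H^0$-invariant, so $h_0^{a_1z}w$ realizes the same heir of $s$ over $(M^\ext,a_0,h_0,a_1,z)$ as $w$ does, and restricting to $(M^\ext,a_0a_1z)$ still yields the heir of $s$. Hence the joint type of $(a_0a_1z,h_0^{a_1z}w)$ over $M^\ext$ matches that of a realization of $u*r*s$.

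The main obstacle will be this joint-type step: the marginals are immediate, but heir-independence of the $H$-factor $h_0^{a_1z}w$ from the $A$-factor $a_0a_1z$ over $M^\ext$ requires precisely the $H^0$-invariance of $\bar s$ granted by the dfg/$f$-generic assumption. Once (ii) is in hand, it combines with (i) to force $u*\ia*\jh=E$, identifying it as the Ellis group generated by $u*v$.
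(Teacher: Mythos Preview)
Your argument is correct. The overall architecture matches the paper's: both proofs use Lemma~\ref{IA*JH} to get the minimal subflow $I=\ia*\jh$ with idempotent $u*v$, both establish a bijection between $u*\ia*\jh$ and $B/B^{00}$ (via the coset map and Lemma~\ref{B0=B00}), and both finish by combining this cardinality with a single inclusion between $u*\ia*\jh$ and the Ellis group $E$.

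The difference is \emph{which} inclusion is proved. The paper shows $E=u*v*S_B(M^\ext)*u*v\subseteq u*\ia*\jh$: it takes an arbitrary $p\in S_B(M^\ext)$, computes $u*v*p*u*v$ inside a pair of nested saturated extensions $N^*\prec N^{**}$, and lands the $A$-part in $u*\ia$ (using that $\ia$ is a two-sided ideal) and the $H$-part in $\jh$. You instead prove the reverse inclusion $u*\ia*\jh\subseteq E=(u*v)*I$ by showing $(u*v)*(u*r*s)=u*r*s$ directly. Your computation is essentially the idempotency calculation of Lemma~\ref{IA*JH} with the trailing $u*v$ replaced by $u*r*s$, and it goes through for the same reason: $h_0\in H^0$, $H^0\trianglelefteq B$, and the global heir of $s$ is $H^0$-invariant by Facts~\ref{B(Mext)-has-dfg} and~\ref{definably-amenable}(i). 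This route is a little more economical---no nested saturated models, no appeal to the two-sided ideal property of $\ia$---at the cost of not exhibiting the stronger statement $u*v*S_B(M^\ext)*u*v\subseteq u*\ia*\jh$ that the paper's computation yields (and which reappears, in sharpened form, as Lemma~\ref{Ellis-gp-in-B-II}).

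One small point of presentation: in step~(i) your ``$H$-marginal'' remark is doing the work of both projections. Since $B=A\rtimes H$ as a definable set, both coordinate maps are definable, so from $u*r_1*s_1=u*r_2*s_2$ you get both $s_1=s_2$ (the $H$-projection) and $u*r_1=u*r_2$ (the $A$-projection); the latter is what feeds into Lemma~\ref{fsgms}(ii). It would read more cleanly to say this explicitly rather than only naming the $H$-marginal.
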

	\begin{proof}
		By Lemma \ref{IA*JH}, $\ia*\jh\sq S_B(M^\ext)$ is a miniaml subflow  and $u*v$ is an idempotent.  The Ellis group containing $u*v$ is $u*v*S_B(M^\ext)*u*v$. 
		
		We first show that $u*v*S_B(M^\ext)*u*v\sq u*\ia*\jh$. Let $p\in S_B(M^\ext)$. Take $a_1,a_2,a_3\in A$ and $h_1,h_2,h_3\in H$ such that 
		\begin{align*}
			&a_1\models u,h_1\models v|(M^\ext, a_1),\  a_2h_2\models p|(M^\ext, a_1,h_1),\\
			&a_3\models u|(M^\ext, a_1,h_1,a_2,h_2),\  \text{and}\\
			&h_3\models v|(M^\ext, a_1,h_1,a_2,h_2,a_3).
		\end{align*}
		Then $u*v*p*u*v=\tp{a_1h_1a_2h_2a_3h_3/M^\ext}$.
		Now 
		\[
		a_1h_1a_2h_2a_3h_3=a_1a_2{h_1}^{a_2}h_2a_3h_3=a_1a_2a_3({h_1}^{a_2}h_2)^{a_3}h_3.
		\]
		Let $N^*\prec N^{**}$ be  extensions of $M^\ext$ such that $N^*$ is $|M|^+$-saturated and $N^{**}$ is $|N^*|^+$-saturated. Without lose of generality, we may assume that $a_1\in A(N^{**})$ realizes the  coheir of $u$ over $N^*$ and $h_3$ realizes the  heir of $v$ over $N^{**}$, and $h_1,a_2,h_2,a_3\in B(N^*)$.
		Since $\tp{h_3/\M^*}$ is a definable $f$-generic type and $({h_1}^{a_2}h_2)\in H(N^*)$, $\tp{({h_1}^{a_2}h_2)h_3/N^{**}}$ is the unique heir of $\tp{({h_1}^{a_2}h_2)h_3/M^\ext}$. Similarly, $\tp{a_1/\M^*}$ is a finitely satisfiable generic type, so $\tp{a_1a_2a_3/N^*}$ is the unique coheir of $\tp{a_1a_2a_3/M^\ext}$. We conclude that
		\[
		\tp{a_1a_2a_3({h_1}^{a_2}h_2)^{a_3}h_3/M^\ext}=\tp{a_1a_2a_3/M^\ext}*\tp{({h_1}^{a_2}h_2)^{a_3}h_3/M^\ext}
		\]
		Clearly, 
		\[
		\tp{a_1a_2a_3/M^\ext}=\tp{a_1/M^\ext}*\tp{a_2a_3/M^\ext}\in u*S_A(M^\ext),\]
		and 
		\[
		\tp{({h_1}^{a_2}h_2)^{a_3}h_3/M^\ext}=\tp{({h_1}^{a_2}h_2)^{a_3}/M^\ext}*\tp{h_3/M^\ext}\in S_H(M^\ext)*v.
		\]
		By Fact \ref{fsg}(vi), we have $u*S_A(M^\ext)=u*\ia$. Clearly, $S_H(M^\ext)*v=\jh$. So $u*v*S_B(M^\ext)*u*v\sq u*\ia*\jh$. On the other side, if $u_1,u_2\in u*\ia$ and $v_1,v_2\in \jh$ such that $u_1*v_1/B^{00}=u_2*v_2/B^{00}$, then we conclude that $u_1/A^{00}=u_2/A^{00}$ and $v_1/H^0=v_2/H^0$. Since $\jh$ is an Ellis group by Lemma \ref{Mini-flow-of-B}, we have $u_1=u_2$ and $v_1=v_2$, which implies that $p\mapsto p/B^{00}$ is a bijection from $u*\ia*\jh$ to $B/B^{00}$. So $u*\ia*\jh=u*v*S_B(M^\ext)*u*v$ is the Ellis group generated by $u*v$ by Fact \ref{definably-amenable}(iv).
	\end{proof}
	The above Lemma shows that $\ea*\jh$ is an Ellis group in $S_B(M^\ext)$ when $\ea$ is a Ellis group  of $S_A(M^\ext)$ and $\jh$ a minimal subflow (or Ellis group) of $S_H(M^\ext)$.
	
	From now on, we use notation $\ea$ to denote  a Ellis group in $S_A(M^\ext)$ and $\eb=\ea*\jh$.
	
	\begin{lem}\label{Ellis-gp-in-B-II}
		Let $q\in \ea$ and $p\in \jh$, then $\eb=q*S_B(M^\ext)*p$.
	\end{lem}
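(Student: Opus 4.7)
The plan is to establish the two inclusions of $\eb = q * S_B(M^\ext) * p$ separately.

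The easy direction $\eb \subseteq q * S_B(M^\ext) * p$ follows from group structure: $\ea$ is a group under $*$ containing $q$, so $q * \ea = \ea$; and by Lemma \ref{Mini-flow-of-B}, $\jh$ is a group under $*$ containing $p$, so $\jh * p = \jh$. Hence
\[\eb = \ea * \jh = (q * \ea) * (\jh * p) = q * (\ea * \jh) * p \subseteq q * S_B(M^\ext) * p.\]

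For the reverse inclusion, take $r \in S_B(M^\ext)$. Writing a realization of $r$ as $b = ah$ under the decomposition $B = A \rtimes H$, set $r_A := \tp{a/M^\ext}$, $r_H := \tp{h/M^\ext}$, and $s := r_A * r_H \in S_B(M^\ext)$. I plan to show (i) $q * s * p \in \eb$, and (ii) $q * r * p = q * s * p$, which together yield $q * r * p \in \eb$. For (i), a direct computation with heir realizations expresses $q * s * p$ as $(q * r_A) * (r_H * p)$; the first factor lies in $q * S_A(M^\ext) = \ea$ by Lemma \ref{fsgms}\ref{fsgms.i} applied to the fsg group $A$, and the second lies in $S_H(M^\ext) * p = \jh$ since $\jh$ is a minimal left ideal of $S_H(M^\ext)$ containing $p$. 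Thus $q * s * p \in \ea * \jh = \eb$.

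For (ii), fix $\alpha \models q$, $(a, h) = b \models r|(M^\ext, \alpha)$, $h^* \models r_H|(M^\ext, \alpha, a)$, and $\eta \models p|N^*$ for some saturated $N^* \supseteq (M^\ext, \alpha, a, h, h^*)$. By the uniqueness of heirs in our $NIP$ context (all types over $M^\ext$ being definable), the parallel data computing $q * s * p$ can be chosen with $a^* = a$ and $\eta^* = \eta$, yielding $q * r * p = \tp{\alpha a h \eta / M^\ext}$ and $q * s * p = \tp{\alpha a h^* \eta / M^\ext}$. Since $h$ and $h^*$ both realize $r_H$ over $M^\ext$, they share the same $H^0$-coset, so $g := h^{-1} h^* \in H^0$. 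By Facts \ref{B(Mext)-has-dfg}(ii) and \ref{definably-amenable}(i), $\tp{\eta/N^*}$ is $f$-generic and therefore $H^0$-invariant, so $g\eta \equiv \eta$ over $N^*$; left-multiplying by $h$ gives $h^* \eta = hg\eta \equiv h\eta$ over $N^*$, and restricting to $(M^\ext, \alpha, a)$ yields $\tp{\alpha a h \eta/M^\ext} = \tp{\alpha a h^* \eta / M^\ext}$. The main obstacle is precisely this step: the pair $(a, h)$ realizing the heir of $r$ need not have $\tp{h/(M^\ext, \alpha, a)}$ equal to the heir of $r_H$, so $q * r * p$ is not a priori in heir-position; the $H^0$-invariance coming from the dfg property of $p$ is exactly what permits replacing $h$ with the heir realization $h^*$ inside the product $h\eta$ without perturbing the resulting type.
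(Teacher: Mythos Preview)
Your proof is correct. Both inclusions are established soundly: the easy direction via the group structure of $\ea$ and $\jh$ is fine, and for the hard direction your two-step reduction through $s = r_A * r_H$ works. The crucial point---that $h$ and $h^*$ lie in the same $H^0$-coset because $H^0$-cosets are type-definable over $\emptyset$, and that the heir of $p$ over $N^*$ is $H^0(N^*)$-invariant---is valid, and the bookkeeping with $a^* = a$, $\eta^* = \eta$ goes through since projecting an heir to a definable coordinate yields an heir.

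Your route differs from the paper's in one notable respect. The paper avoids your intermediate type $s$ entirely by reversing the order of realization: it first chooses $a_0h_0 \models r$ and then takes $a$ realizing the \emph{coheir} of $q$ over $(M^\ext, a_0, h_0)$, so that by heir--coheir duality $a_0h_0$ is automatically in heir position over $(M^\ext, a)$. Then, using the dfg property in the form ``translates of the heir of $p$ remain heirs'' (Fact~\ref{B(Mext)-has-dfg}(ii)), it gets $\tp{h_0h/N^*}$ to be an heir and splits $q*r*p$ directly as $\tp{aa_0/M^\ext} * \tp{h_0h/M^\ext} \in \ea * \jh$. Your approach instead keeps the naive heir-of-$r$ realization and repairs the failure of $h$ to be in heir position by invoking the dfg property in the form ``$H^0$-invariance of the $f$-generic heir.'' Both arguments hinge on dfg, just expressed differently; the paper's coheir trick is shorter, while your version makes the obstruction and its resolution more explicit.
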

	
	\begin{proof}
		Clearly, $\ea=q*S_A(M^\ext)$ and $\jh=S_H(M^\ext)*p$. So $\ea*\jh\sq q*S_B(M^\ext)*p$. 
		
		Conversely, let $r\in S_B(M^\ext)$. Let  $N^{*}\succ M^\ext$ such that  $N^*$ is $|M|^+$-saturated. Take $a,a_0\in A(N^*)$ and $h_0\in H(N^*)$ such that $a\models$ the coheir of $q$ over $(M^\ext, a_0,h_0)$ and $a_0h_0\models r$. Let $h\models p|N^*$ Then $\tp{h/N^*}$ is $f$-generic and definable. So the transition $\tp{h_0h/N^*}$ is definable over $M^\ext$, and thus an heir of $\tp{h_0h/M^\ext}$. We see that
		\[q*r*p=\tp{aa_0h_0h/M^\ext}=\tp{aa_0/M^\ext}*\tp{h_0h/M^\ext}\sq \ea*\jh.\]
		This completes the proof.
	\end{proof}

	\begin{cor}\label{group-structure-in-EB}
		Let $\ea$ and $\jh$ be Ellis groups in $S_A(M^\ext)$ and $S_H(M^\ext)$ respectively. Then for any $r_1,r_2,r^*\in \ea$ and $s_1,s_2,s^*\in \jh$ such that $r_1\vdash a_1/A^{00}$, $r_2\vdash a_2/A^{00}$, $s_1\vdash h_1/H^0$,  $s_2\vdash h_2/H^0$ , $r^*\vdash a_1a_2/A^{00}$, and $s^*\vdash h_1^{a_2}h_2/H^0$. Then
		\[
		(r_1*s_1)*(r_2*s_2)=r^**s^*.
		\]
		
	\end{cor}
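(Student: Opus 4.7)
The plan is to exploit the group isomorphism $\sigma \colon \eb \to B/B^{00}$, $p \mapsto p/B^{00}$, where $\eb = \ea * \jh$ is the Ellis group of $S_B(M^\ext)$ produced by Lemma \ref{Ellis-group-in-B} and $\sigma$ is the isomorphism supplied by Fact \ref{definably-amenable}(iv). First I would observe that both sides of the identity lie in $\eb$: the left-hand side because $r_i*s_i \in \ea * \jh = \eb$ for $i=1,2$ and $\eb$ is closed under $*$, and the right-hand side because $r^* \in \ea$ and $s^* \in \jh$ by hypothesis. By injectivity of $\sigma$ it then suffices to check that $(r_1*s_1)*(r_2*s_2)$ and $r^* * s^*$ have the same image in $B/B^{00}$.

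To compute $\sigma$ on the left-hand side I would fix a sufficiently saturated extension $N^* \succ M^\ext$ and realize $a_1 \models r_1$, $h_1 \models s_1 | (M^\ext, a_1)$, $a_2 \models r_2 | (M^\ext, a_1, h_1)$, and $h_2 \models s_2 | (M^\ext, a_1, h_1, a_2)$. Iterating the heir-based description of $*$ from Section 1.2 gives $(r_1*s_1)*(r_2*s_2) = \tpo{a_1 h_1 a_2 h_2}{M^\ext}$; the semidirect-product identity $a_1 h_1 a_2 h_2 = (a_1 a_2)(h_1^{a_2} h_2)$ then exhibits its image under $\sigma$ as the coset $(a_1 a_2)(h_1^{a_2} h_2) B^{00}$.

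For the right-hand side, the hypotheses $r^* \vdash a_1 a_2 / A^{00}$ and $s^* \vdash h_1^{a_2} h_2 / H^0$ imply that any realization $a' h'$ of $r^* * s^*$ has the form $a' = a_1 a_2 \alpha$, $h' = h_1^{a_2} h_2 \beta$ with $\alpha \in A^{00}$ and $\beta \in H^0$. Invoking Lemma \ref{B0=B00} ($B^{00} = A^{00} \rtimes H^0$) together with the normality of $B^{00}$ in $B$, a short conjugation computation gives $a' h' \equiv (a_1 a_2)(h_1^{a_2} h_2) \pmod{B^{00}}$, so both images coincide. The main obstacle is this last bookkeeping step: one has to confirm that $h_1^{a_2}$ lies in $H$ (using that $H$ is normal in $B$, as implicit in the proof of Lemma \ref{IA*JH}) and that the $A^{00}$-error $\alpha$ can be slid past $h_1^{a_2} h_2$ modulo $B^{00}$. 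Once these are in hand the coset identity is routine, and the injectivity of $\sigma$ finishes the proof.
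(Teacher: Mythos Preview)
Your proposal is correct and follows essentially the same route as the paper's own proof: both arguments observe that the two sides lie in the Ellis group $\eb=\ea*\jh$, compute their images under the isomorphism $\sigma\colon \eb\to B/B^{00}$ of Fact~\ref{definably-amenable}(iv) as the common coset $(a_1a_2)(h_1^{a_2}h_2)B^{00}$, and conclude by injectivity of $\sigma$. The paper's version is terser---it simply asserts the two coset computations and invokes Lemma~\ref{Ellis-group-in-B} and Fact~\ref{definably-amenable}---whereas you spell out the heir realizations for the left-hand side and the normality bookkeeping (sliding the $A^{00}$-error past $h_1^{a_2}h_2$ using normality of $B^{00}$) for the right-hand side; these details are exactly what the paper is suppressing.
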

	\begin{proof}
		It is easy to see from Lemma \ref{Ellis-group-in-B} and Fact \ref{definably-amenable} that $(r_1*s_1)*(r_2*s_2)\vdash  a_1a_2h_1^{a_2}h_2/B^{00}$. On the other side, if $r^*\vdash a_1a_2/A^{00}$ and $s^*\vdash h_1^{a_2}h_2/H^0$, then $r^**s^*\vdash a_1a_2h_1^{a_2}h_2/B^{00}$. So $(r_1*s_1)*(r_2*s_2)=r^**s^*$ as required.
	\end{proof}

	\subsection{Groups with compact-dfg decomposition}
	We assume in this section that $G=G(\M)$ is a group definable in $\M$, with parameters from $M_0=\Q$, and $G = CH$ is a decomposition of $G$, where $H$ is a $\Q$-definable subgroup of $G$ with dfg, and $C$ a $\Q$-definbale subset of $G$ such that $C(M_0)$ is definably compact, and open in $G(M_0)$. 
	
	%where $B$ has dfg, $K$ is open and has fsg.  
	
	Since $C$ is open in $G$, the infinitesimals of $C$ over $M_0$,   which is the intersection of all $\Q$-definable open subsets of $C$, denoted by $\mu_C$, coincides with $\mu_G$, the  infinitesimals of $G$ over $M_0$. 
	By the continuity of the group operation, we see that $\mu_G^g=\mu_G$ for all $g\in G(M_0)$. Let $V_G=G(M_0)\mu_G$, then it is the  subgroup of $G$ consisting of all elements have its standard part in $G(M_0)$. It is easy to see that $V_G\leq N_G(\mu_G)=N_G(\mu_C)$.

	For any $N\succ M_0$, we use $G^{0}(N)$ to denote $G^0\cap G(N)$. By $V_G(N)$ we mean set $G(M_0)\mu_G(N)$, which is the  subgroup of $G(N)$ consisting of all elements have its standard part in $G(M_0)$.

	Let $Y$ be an $N$-definable subset of $G$. By $Y(N)/H$ we mean the set $\{g/H(N)|\ g\in Y(N)\}$. Let $X=G/H$, we write $\Def(X(N))$ for the boolean algebra of all sets  of the form $\{Y(N)/H|\ Y \in\Def(G(N)) \}$, and $S_{X}(N)$ is the space of all ultrafilters of $\Def(X(N))$, similarly for $\Def^\ext(X)$ and $S_{X}(N^\ext)$.

	We now consider quotient space $X= G/H$, which admits a quotient topology. Let $\pi$ be the projection from $G$ to $X$, then 
	it is easy to see that $\pi$ could be naturally extended to a onto homomorphism from $S_G(M^\ext)$ to $S_X(M^\ext)$.

	\begin{lem}\label{epsilon-equ}
		Let $g,h\in V_G$ such that $\tp{(g/H)/M_0}=\tp{(h/H)/M_0}$, then there is $\epsilon_1,\epsilon_2\in \mu_G$ such that $\epsilon_1 g H=h H$ and $g\epsilon_2 H=h H$.
	\end{lem}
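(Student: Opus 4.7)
My plan is to first decompose $g$ and $h$ by their standard parts, then reduce the lemma to the equality of cosets $g_0 H = h_0 H$, and finally write down $\epsilon_1, \epsilon_2$ by an explicit formula that uses the fact that $\mu_G$ is normalized by $G(M_0)$.

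First, since $g, h \in V_G = G(M_0)\mu_G$, I would write $g = g_0 \alpha$ and $h = h_0 \beta$ with $g_0, h_0 \in G(M_0)$ and $\alpha, \beta \in \mu_G$.

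The central step is to prove $g_0 H = h_0 H$. I would argue by contradiction. If these cosets differ, they are distinct $M_0$-rational points of $X = G/H$. In the $p$-adically closed context, $X$ inherits a Hausdorff topology from $G$ (as the quotient of a topological group by a closed definable subgroup), and two such points are separated by $M_0$-definable open subsets. Pick an $M_0$-definable open $U \subseteq X$ containing $g_0 H$ and an $M_0$-definable open $U'$ containing $h_0 H$ with $U \cap U' = \emptyset$. The preimage $\pi^{-1}(U)$ is an $M_0$-definable open subset of $G$ containing $g_0$. Since left translation by $g_0$ is an $M_0$-definable homeomorphism and $\mu_G$ is the intersection of all $M_0$-definable open neighborhoods of $e$, we get $g_0 \mu_G \subseteq \pi^{-1}(U)$, hence $g = g_0\alpha \in \pi^{-1}(U)$ and $gH \in U$. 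Symmetrically $hH \in U'$, so $hH \notin U$, contradicting $\tpo{gH}{M_0} = \tpo{hH}{M_0}$.

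Once $g_0 H = h_0 H$, i.e.\ $h_0^{-1} g_0 \in H \cap G(M_0)$, I would take
\[
\epsilon_1 = (h_0\beta h_0^{-1})(g_0 \alpha^{-1} g_0^{-1}), \qquad \epsilon_2 = \alpha^{-1}(g_0^{-1} h_0)\beta(g_0^{-1} h_0)^{-1}.
\]
Each factor of $\epsilon_1$ and $\epsilon_2$ lies in $\mu_G$ because $G(M_0)$ normalizes $\mu_G$, so $\epsilon_1, \epsilon_2 \in \mu_G$. Direct computation yields $\epsilon_1 g = h_0\beta(h_0^{-1} g_0) \in hH$ and $g\epsilon_2 = h(h_0^{-1} g_0) \in hH$, giving $\epsilon_1 g H = hH$ and $g\epsilon_2 H = hH$ as required.

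The main obstacle is the separation claim used in the central step: two distinct $M_0$-rational points of $X = G/H$ must be separable by $M_0$-definable open subsets of $X$. This relies on $X$ carrying a good definable topology inherited from the $p$-adic topology on $G$, together with $H$ being closed; once this is established, the rest of the argument is mechanical algebra with $\mu_G$.
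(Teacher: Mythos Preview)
Your proof is correct and the explicit formulas for $\epsilon_1,\epsilon_2$ check out, but you take a genuinely different route from the paper. You establish the stronger intermediate claim $\st(g)H=\st(h)H$ by separating two distinct $M_0$-rational points of $X=G/H$ with disjoint $M_0$-definable open sets, and then write down $\epsilon_1,\epsilon_2$ algebraically. The paper instead argues directly by compactness in $G$: assuming $g\mu_G H\cap h\mu_G H=\emptyset$, it finds a single $M_0$-definable open \emph{subgroup} $D\supseteq\mu_G$ with $gDH\cap hDH=\emptyset$, and then the $M_0$-definable set $\st(g)D/H$ separates $g/H$ from $h/H$. From $g\mu_G H\cap h\mu_G H\neq\emptyset$ and $\mu_G g=g\mu_G$, $\mu_G h=h\mu_G$, the desired $\epsilon_1,\epsilon_2$ fall out without further computation.

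The trade-off is exactly the one you flagged: your argument is more geometric and yields a sharper conclusion, but it leans on $X$ carrying a definable Hausdorff topology (so that distinct rational points are separated by $M_0$-definable opens whose preimages absorb $g_0\mu_G$, $h_0\mu_G$). The paper sidesteps this entirely by exploiting the totally disconnected nature of the $p$-adic topology: a neighborhood basis of the identity by open subgroups lets compactness produce a single definable $D$, and the separating set $\st(g)D/H$ is visibly $M_0$-definable as the image of an $M_0$-definable subset of $G$. So the paper's proof is shorter and avoids your ``main obstacle'' altogether; your proof is fine once that obstacle is handled, and has the bonus of making the standard-part statement explicit.
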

	\begin{proof}
		For any $g,h\in V_G$, we have $\mu_Gg=g\mu_G$ and $\mu_Gh=h\mu_G$. So it sufficies to show that $g\mu_GH\cap h\mu_GH\neq\emptyset$.
		
		If $g\mu_GH\cap h\mu_GH=\emptyset$, then by compactness there is a $M_0$-definable open subgroup $D$ of $C$ such that $D \supseteq \mu_G$ and $gDH\cap hDH=\emptyset$. Since $\mu_G\sq D$, we see that $gD=\st(g)D$ and $hD=\st(h)D$, and thus $g/H\in \st(g)D/H$ and $h/H\notin \st(g)D/H$. We conclude that  $\tp{(g/H)/M_0}\neq\tp{(h/H)/M_0}$. A contradiction.
	\end{proof}

	\begin{lem}\label{kid}
		Let $p \in S_G(M^\ext)$ be $H^0(M)$-invariant. Then for any $k\in K$ and $h\in H$ such that $kh\models p$, ${H^0(M)}^{k}\subseteq \mu_GH$.
	\end{lem}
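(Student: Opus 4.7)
The plan is to use the $H^0(M)$-invariance of $p$ together with a re-decomposition of $\eta k$ via $G = CH$ to reduce the statement to an application of Lemma \ref{epsilon-equ}.

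First, fix $\eta \in H^0(M)$. Since $H^0(M) \subseteq M \subseteq M^\ext$ and $p$ is $H^0(M)$-invariant, we have $\eta \cdot p = p$, so $\eta kh$ also realizes $p$. Let $\pi \colon G \to X := G/H$ be the projection and set $q = \pi_* p$; then $q$ is $H^0(M)$-invariant and both $kH$ and $\eta k H$ realize $q$. Hence $\tp{kH/M^\ext} = \tp{\eta kH/M^\ext}$, and restricting to $M_0 \subseteq M^\ext$ gives $\tp{kH/M_0} = \tp{\eta kH/M_0}$.

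The main obstacle is that $\eta kH$ need not a priori lie in $\pi(V_G)$: in the $p$-adic/dfg setting, $H^0(M)$ can contain elements without standard part in $G(M_0)$, so one cannot directly feed the pair $(k, \eta k)$ into Lemma \ref{epsilon-equ}. To overcome this, I use the decomposition $G = CH$: write $\eta k = c h'$ with $c \in C$ and $h' \in H$. Then $\eta kH = cH$, and $c \in C(\M) \subseteq V_G$ by the definable compactness of $C$ over $M_0$. Thus the left $H$-coset $\eta kH$ contains a representative $c$ lying in $V_G$.

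Now $k, c \in V_G$ satisfy $\tp{kH/M_0} = \tp{cH/M_0}$, so Lemma \ref{epsilon-equ} produces $\epsilon \in \mu_G$ with $\epsilon kH = cH = \eta kH$. This gives $k^{-1}\eta^{-1}\epsilon k \in H$, i.e.\ $(\eta^k)^{-1}\epsilon^k \in H$, hence $\eta^k H = \epsilon^k H$. Since $k \in V_G \subseteq N_G(\mu_G)$, we have $\epsilon^k = k^{-1}\epsilon k \in \mu_G$, and therefore $\eta^k \in \epsilon^k H \subseteq \mu_G H$. As $\eta \in H^0(M)$ was arbitrary, this gives $H^0(M)^k \subseteq \mu_G H$.
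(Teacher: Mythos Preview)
Your proof is correct and follows essentially the same approach as the paper: use the $H^0(M)$-invariance of $p$ to conclude that $kH$ and $\eta kH$ have the same type over $M_0$, then invoke Lemma~\ref{epsilon-equ} to produce $\epsilon\in\mu_G$ and finish by conjugating. Your additional step of re-decomposing $\eta k = ch'$ via $G = CH$ so as to obtain a representative $c\in V_G$ is a careful detail the paper's proof omits, and it is genuinely needed since Lemma~\ref{epsilon-equ} requires both inputs to lie in $V_G$ while $\eta k$ itself need not.
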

	\begin{proof}
		Let $h_0\in H^0(M)$. Since $p$ is $H^0(M)$-invariant, we see that 
		\[
		\tp{h_0kh/M^\ext}=\tp{kh/M^\ext},
		\]
		and hence  $\tp{(h_0k/H)/M_0}=\tp{(k/H)/M_0}$. By Lemma \ref{epsilon-equ}, we have $h_0kH= k\epsilon H$ for some $\epsilon\in \mu_G$. So ${h_0}^{k}\in \mu_GH$ for all $h_0\in H^0(M)$ as required.
	\end{proof}

	\subsection{Minimal subflows and Ellis groups of  groups admitting Iwasawa decomposition}

	In this section we assume  that $L$ is the language of the rings, $M_0=(\Q,+,\times,0,1)$ is the standard model of $\pcf$, $M$ will denote an elementary extension of $M_0$, and $L_M^\ext$ the associated language of $M^\ext$. We assume that the $L_M^\ext$-structure $\M^*$ is a monster model of $\Th(M^\ext)$, and $\M=\M^*\upharpoonright L$ is the reduction of $\M^*$ on $L$. Clearly, $\M\succ M_0$ is a monster model of $\pcf$.

	We now consider the case that $G=G(\M)$ is a  linear algebraic group over $\Q$ admitting a  Iwasawa decomposition $G=KB$, where $B$ is a borel subgroup of $G$, definable over $\Q$, and $K$ is a $\Q$-definable open  subgroup of $G$ such that $K(\Q)$ is compact. 
	
	We can decompose $B$ into a semi-product $B=T\rtimes B_u$, where $B_u(\Q)$ is the  maximal unipotent subgroup of $B(\Q)$ and $T(\Q)$ a torus. a basic fact  is that $N_G(B_u)=N_G(B)=B$ (see \cite{lag}). Moreover, $T$ is an almost direct product of $T_{spl}$ and $T_{an}$, where $T_{spl}(\Q)$ is $\Q$-split, thus is isomorphic to $\Gm^k$ for some $k\in \N$, and $T_{an}(\Q)$ is anisotropic, which is compact \cite{AG-and-NT, Aniso}. 
	
	For simplity, We assume that $B=A\rtimes H$ where $A=T_{an}$ and $H=S\rtimes B_u$. By \cite{Yao-Tri}, $H$ has dfg and $H^{00}=H^0=S^0\rtimes B_u$. Let $C=KA$, it is easy to see that $G=CH$ is a compact-dfg decomposition. By \cite{OP-plg}, both $K$ and $A$ have $fsg$, $K^0=K^{00}=\mu_G$, and $A^0=A^{00}=\mu_G\cap A$. By Lemma \ref{B0=B00}, $B^{00}=A^0\rtimes H^0=B^0$, and $B/B^{0}\cong T/T^0$ is commutative.

	\begin{lem}\label{klem}
		Let $p\in S_G(M^\ext)$ be $H^0(M)$-invariant. Suppose that $g \models p $, then $g= \epsilon b$ for some  $\epsilon \in \mu_G$, $b \in B$ with $\epsilon, b \in \dcl{M,g}$.
	\end{lem}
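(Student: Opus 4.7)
The plan is to exploit the decomposition $G = KAH$ (coming from $G = KB$ and $B = A \rtimes H$) together with a $\Q$-rational local section of the projection $\pi \colon G \to G/B$, using the $H^0(M)$-invariance of $p$ to force the ``$K$-part'' of $g$ to land infinitesimally close to $K \cap B$. First I would apply the definable Skolem functions of $\pcf$ to write $g = ch$ with $c \in C$ and $h \in H$, and further $c = ka$ with $k \in K$ and $a \in A$, so that $g = kah$ with all three factors in $\dcl{M,g}$.

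Next I would apply Lemma \ref{kid} to the compact-dfg decomposition $G = CH$: since $ch \models p$ and $p$ is $H^0(M)$-invariant, $H^0(M)^c \subseteq \mu_G H$. Substituting $c = ka$, using that $H$ is normal in $B$ (so $aHa^{-1} = H$) and that $a \in A \subseteq V_G$ normalizes $\mu_G$, this rearranges to $k^{-1} H^0(M) k \subseteq \mu_G H$. Restricting to $H^0(\Q) \subseteq V_G$ and noting that $K \subseteq V_G$ (as $K(\Q)$ is compact), write $k = k_0 \epsilon_0$ with $k_0 = \st(k) \in K(\Q)$ and $\epsilon_0 \in \mu_G$. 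Taking standard parts then yields $k_0^{-1} H^0(\Q) k_0 \subseteq H(\Q)$. Since $H = S \rtimes B_u$ is a connected linear algebraic group over $\Q$ and $H^0(\Q)$ has finite index in the Zariski-dense $H(\Q)$, $H^0(\Q)$ is itself Zariski dense in $H$, whence $k_0 \in N_G(H)$. Because $B_u$ is the unique maximal unipotent subgroup of $H$, $k_0$ normalizes $B_u$, so $k_0 \in N_G(B_u) = B$, giving $k_0 \in K \cap B$.

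To upgrade this existence conclusion to a $\dcl{M,g}$-definable factorization I would use a $\Q$-rational local section $\sigma$ of $\pi$ defined on a $\Q$-definable open neighborhood of $eB$ with $\sigma(eB) = e$; such a section exists because $G \to G/B$ is Zariski-locally trivial as a $B$-torsor. Since $k = k_0 \epsilon_0$ with $k_0 \in B$, we have $\pi(k) \in \pi(\mu_G) \subseteq \mu_{G/B}$, so $\sigma(\pi(k))$ is defined and, by continuity of $\sigma$ at $eB$, lies in $\mu_G$. Setting $\epsilon := \sigma(\pi(k))$ and $b := \epsilon^{-1} g$, one checks that $\epsilon^{-1} k \in B$ (because $\pi(\epsilon^{-1} k) = eB$), hence $b = (\epsilon^{-1} k)(ah) \in B$, and both $\epsilon$ and $b$ are built from $g$ by $\Q$-definable operations, so they lie in $\dcl{M,g}$. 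The main obstacle I anticipate is precisely this last definability step: the conclusion $g \in \mu_G B$ is initially only an existence statement based on the non-definable standard-part map, and the upgrade requires a $\dcl$-definable substitute for $\st$, which the $\Q$-rational local section of the flag variety bundle supplies.
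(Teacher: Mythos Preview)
Your strategy is essentially the paper's, but there is a real gap in your step concerning $N_G(H)$, and an unnecessary detour in the definability step.

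The gap: you assert that $H^0(\Q)$ has finite index in $H(\Q)$ and is therefore Zariski dense in $H$. This fails whenever the split torus $S$ is nontrivial. Since $H^0 = S^0 \rtimes B_u$ and $\Gm^0(\Q) = \bigcap_n (\Q^*)^n = \{1\}$ (no nontrivial element of $\Q^*$ is an $n$-th power for every $n$), one has $S^0(\Q)=\{1\}$ and hence $H^0(\Q)=B_u(\Q)$, whose Zariski closure is only $B_u$, not $H$; so the conclusion $k_0 \in N_G(H)$ is unjustified. The fix, which is exactly what the paper does, is to go straight to $B_u$: from $B_u(\Q)\subseteq H^0(\Q)$ and your inclusion you already have $k_0^{-1} B_u(\Q) k_0 \subseteq H(\Q) \subseteq B$, hence by Zariski density $k_0^{-1} B_u k_0 \subseteq B$; any connected unipotent subgroup of the connected solvable group $B$ lies in its unipotent radical $B_u$, and by dimension $k_0^{-1} B_u k_0 = B_u$, so $k_0 \in N_G(B_u) = B$ directly, bypassing $N_G(H)$.

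The definability detour via a rational section is unneeded. The element $k_0 = \st(k)$ lies in $K(\Q) \subseteq M$, so $k_0 \in \dcl{M}$ trivially as a point of the base model. With $k, b' \in \dcl{M,g}$ from Skolem functions (writing $g = k b'$, $k\in K$, $b' \in B$), set $\epsilon := k k_0^{-1} \in \mu_G$ and $b := k_0 b' \in B$; both are visibly in $\dcl{M,g}$. This is precisely the paper's argument, and no local section of $G \to G/B$ is required.
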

	
	\begin{proof}
		Let $g = kb'$ for some $k\in K$ and $b'\in B$. Since $\pcf$ has definable Skolem functions (see \cite{Dries-defskl}), we may assume $k,b'\in \dcl{M,g}$. By Lemma \ref{kid}, we have that $H^0(M)^{k}\subseteq \mu_G H$. Note that $B_u(M_0)\sq H^0(M)$. So in particular, we have $B_u(M_0)^{k}\subseteq \mu_G H\cap V_G$. Take a standard part map, we have 
		\[
		\st(B_u(M_0)^{k})=B_u(M_0)^{\st(k)}\subseteq\st(\mu_GH\cap V_G)=H(M_0)\sq B(M_0).\]
		Thus $\st(k)\in N_G(B_u(M_0))=B(M_0)$. Let $\epsilon\in \mu_G$ such that $k=\epsilon  \cdot\st(k)$, then $g=\epsilon\cdot\st(k)b' $. Let $b=\st(k)b'$, then $g = \epsilon b\in \mu_G B$ as required. Clearly $\epsilon, b \in \dcl{M,g}$.
	\end{proof}
	
	Now it is easy to see that 
	
	\begin{cor}\label{exchange}
		Let $\jb$ be a minimal subflow of $S_B(\M^\ext)$, $b \models p_1 \in \jb$, and $g \in G(M)$. Then there exist  $\epsilon \in \mu_G\cap \dcl{M,b,g}$ and  $b' \in B\cap \dcl{M,b,g}$  such that $bg = \epsilon b'\in \mu_GB$.
	\end{cor}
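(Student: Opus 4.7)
The plan is to decompose $b=ah$ along $B=A\rtimes H$, apply Lemma~\ref{klem} to $hg$, and then conjugate by $a$ to recover a decomposition of $bg$. Since $B=A\rtimes H$ is $M_0$-definable, every element of $B$ has a unique decomposition $b=ah$ with $a\in A$, $h\in H$, and both projections $B\to A$ and $B\to H$ are $M_0$-definable, so $a,h\in\dcl{M,b}$.

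To apply Lemma~\ref{klem} to $hg$, I must verify that $\tp{hg/M^\ext}$ is $H^0(M)$-invariant. Since $p_1\in\jb$ lies in a minimal subflow of the definably amenable group $B$, it belongs to some Ellis group and is therefore $B^{00}$-invariant by Fact~\ref{definably-amenable}(iv). By Lemma~\ref{B0=B00}, $B^{00}=A^0\rtimes H^0$ contains $H^0$ as a subgroup, so for every $h_0\in H^0(M)$ we have $\tp{h_0 b/M^\ext}=\tp{b/M^\ext}$. Applying the $M_0$-definable projection $\pi\colon B=A\rtimes H\to H$, for which $\pi(h_0 ah)=h_0 h$, yields $\tp{h_0 h/M^\ext}=\tp{h/M^\ext}$, and since $g\in M$ this gives $\tp{h_0 hg/M^\ext}=\tp{hg/M^\ext}$, as required.

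Lemma~\ref{klem} then produces $\epsilon''\in\mu_G$ and $b''\in B$ with $\epsilon'',b''\in\dcl{M,hg}$ and $hg=\epsilon''b''$. Setting $\epsilon:=a\epsilon''a^{-1}$ and $b':=ab''$, one computes
\[
bg \;=\; a(hg) \;=\; a\epsilon''b'' \;=\; (a\epsilon''a^{-1})(ab'') \;=\; \epsilon b'.
\]
Since $A=T_{an}$ is definably compact over $M_0$, every element of $A$ has a standard part in $A(M_0)$, so $a\in A(M_0)\mu_G\subseteq V_G$; by the remarks preceding Lemma~\ref{epsilon-equ}, $V_G\leq N_G(\mu_G)$, whence $a\mu_G a^{-1}=\mu_G$ and $\epsilon\in\mu_G$. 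Plainly $b'\in B$, and $\epsilon,b'\in\dcl{M,a,\epsilon'',b''}\subseteq\dcl{M,b,g}$.

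The main obstacle is the invariance step: one has to push the $B^{00}$-invariance of $p_1$ down to $H^0(M)$-invariance of $\tp{hg/M^\ext}$ via the projection $\pi$, which rests on the identification $B^{00}=A^0\rtimes H^0$ from Lemma~\ref{B0=B00} and on the fact that $H$ sits inside $B$ as a subgroup, so that $H^0(M)\subseteq B^{00}\cap B(M)$. Once this invariance is established, the rest is the algebraic manipulation above together with the compactness of $A$.
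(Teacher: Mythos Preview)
Your argument is correct, but it is considerably more elaborate than necessary. The paper's proof applies Lemma~\ref{klem} \emph{directly} to the element $bg$: since $p_1\in\jb$ is almost periodic in $S_B(M^\ext)$ it is $B^0(M)$-invariant, and because $g\in G(M)$ the right translate $\tp{bg/M^\ext}$ is still $B^0(M)$-invariant, hence in particular $H^0(M)$-invariant; Lemma~\ref{klem} then yields $bg=\epsilon b'$ with $\epsilon,b'\in\dcl{M,bg}=\dcl{M,b,g}$ in one step. Your detour through the decomposition $b=ah$, applying the lemma to $hg$, and then conjugating back by $a$ (using $a\in V_G\leq N_G(\mu_G)$) works but gains nothing: the invariance of $\tp{bg/M^\ext}$ under $H^0(M)$ is already immediate once you know $p_1$ is $B^0(M)$-invariant, so there is no need to pass to the $H$-component first.

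One small point of precision: your appeal to Fact~\ref{definably-amenable}(iv) for the $B^{00}$-invariance of $p_1$ is slightly off. That fact gives the isomorphism $E_B^\ext\cong B/B^{00}$, but does not by itself say that left translation by elements of $B^{00}(M)$ fixes each point of the minimal flow (the action of $B(M)$ need not preserve individual Ellis groups). The clean justification is that $p_1$, being almost periodic, extends to a global $f$-generic type, which is $B^{00}$-invariant by Fact~\ref{definably-amenable}(i); restricting to $M^\ext$ gives $B^{00}(M)$-invariance of $p_1$. This is the content the paper is invoking in its one-line proof.
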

	\begin{proof}
		Since $\tp{h_0g/M^\ext}$ is $B^0(M)$-invariant, thus is $H^0(M)$-invariant.
	\end{proof}

	\begin{lem}\label{kconjg}
		Let $g \in G$ and $\epsilon \in \mu_G$ such that $\tpo{g}{M_0, \epsilon}$ is finitely satisfiable in $M_0$, then $\epsilon^g \in \mu_G$. Particularly, if $N \succ M_0$ and $g \in G$ such that $\tpo{g}{N}$ is finitely satisfiable in $M_0$, then $(\mu_G(N))^g \leq \mu_G$.
	\end{lem}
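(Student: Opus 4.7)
The plan is to argue by contradiction using that $\mu_G$ is the intersection of all $\Q$-definable open neighborhoods of $1_G$, combined with the normalization property $\mu_G^{g_0}=\mu_G$ for every $g_0\in G(M_0)$ that was noted earlier in the excerpt (continuity of conjugation by a standard element sends $\Q$-definable open neighborhoods of $1_G$ to $\Q$-definable open neighborhoods of $1_G$).

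To prove the main statement, suppose for contradiction that $\epsilon^g=g^{-1}\epsilon g\notin \mu_G$. Then there is some $\Q$-definable open neighborhood $U$ of $1_G$ with $g^{-1}\epsilon g\notin U$. The condition ``$x^{-1}\epsilon x\notin U$'' is an $L_{M_0\cup\{\epsilon\}}$-formula in $\tpo{g}{M_0,\epsilon}$. By finite satisfiability of $\tpo{g}{M_0,\epsilon}$ in $M_0$, together with the formula ``$x\in G$'' which is also in the type, there exists $g_0\in G(M_0)$ such that $g_0^{-1}\epsilon g_0\notin U$. But $g_0\in G(M_0)$ normalizes $\mu_G$, so $g_0^{-1}\epsilon g_0\in\mu_G\subseteq U$, contradicting the choice of $g_0$.

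For the ``particularly'' clause, let $N\succ M_0$ and $g\in G$ with $\tpo{g}{N}$ finitely satisfiable in $M_0$. For any $\epsilon\in \mu_G(N)$, the type $\tpo{g}{M_0,\epsilon}$ is a restriction of $\tpo{g}{N}$, hence is finitely satisfiable in $M_0$, so the first part gives $\epsilon^g\in\mu_G$. Since $\epsilon$ was an arbitrary element of $\mu_G(N)$, this yields $(\mu_G(N))^g\leq \mu_G$, as required.

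There is essentially no obstacle; the only subtle point is recognising that the formula ``$x^{-1}\epsilon x\notin U$'' has parameters in $M_0\cup\{\epsilon\}$, which is exactly the parameter set over which finite satisfiability in $M_0$ is assumed, so the coheir witness $g_0$ can be chosen in $G(M_0)$ rather than merely in $G$.
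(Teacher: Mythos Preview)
Your proof is correct and follows essentially the same argument as the paper's: both argue by contradiction, pick an $M_0$-definable open neighborhood $U$ of the identity witnessing $\epsilon^g\notin\mu_G$, and use finite satisfiability to find $g_0\in G(M_0)$ with $\epsilon^{g_0}\notin U$, contradicting $\mu_G^{g_0}=\mu_G$. Your write-up is in fact slightly more explicit than the paper's, since you spell out why $\epsilon^{g_0}\notin U$ yields the contradiction and you also supply the easy deduction of the ``particularly'' clause, which the paper leaves unstated.
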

	
	\begin{proof}
		Suppose that $g$ and $\epsilon$ satisfy the condition, and $\epsilon^g \notin \mu_G$, then there exists an $M_0$-definable open neighorhood $U$ around $\id_G$ satisfies $\epsilon^g \notin U$. Thus the formula $(\epsilon^x \notin U)$ is in $\tpo{g}{M_0, \epsilon}$. So by the type is finitely satisfiable in $M_0$, we know that there exists $g_0 \in G(M_0)$ such that $\epsilon^{g_0} \notin U$, hence is not in $\mu_G$. A contradiction.
	\end{proof}

	\begin{lem}\label{ctp}
		Let  $\epsilon_0, \epsilon \in \mu_G$ and  $b_0, b \in B$ such that $\epsilon_0b_0 = \epsilon b$, then $b_0, b$ are in the same coset of $B^0$.
	\end{lem}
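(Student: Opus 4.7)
The plan is to reduce the statement to the claim that $\mu_G \cap B \subseteq B^0$. From the hypothesis $\epsilon_0 b_0 = \epsilon b$ one reads off $bb_0^{-1} = \epsilon^{-1}\epsilon_0 \in \mu_G \cap B$, and since $B^0$ is normal in $B$, the inclusion $bb_0^{-1} \in B^0$ places $b$ and $b_0$ in the same coset of $B^0$.

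For the claim, I would take an arbitrary $x \in \mu_G \cap B$ and decompose it uniquely as $x = ah$ with $a \in A$ and $h \in H$, using $B = A \rtimes H$. Since $A = T_{an}$ is definably compact over $M_0$, the element $a$ lies in $V_G$ and has a standard part $a_0 := \st(a) \in A(M_0)$. Because $V_G$ is a subgroup containing both $a$ and $x \in \mu_G$, also $h = a^{-1} x \in V_G$ with $\st(h) = a_0^{-1}$. The decisive point is then that $H$ is Zariski closed in $G$, hence $p$-adically closed, so $\st(h) \in H(M_0)$ (the polynomial equations cutting out $H$ are preserved under taking standard parts). Combining, $a_0^{-1} \in A(M_0) \cap H(M_0)$, and the semidirect-product decomposition forces $A \cap H = \{e\}$, so $a_0 = e$. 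Thus $a \in \mu_G \cap A = A^0$ and $h \in \mu_G \cap H$.

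To close the argument I would check that $\mu_G \cap H \subseteq H^0$. The set $\mu_G \cap H$ coincides with the set of infinitesimals at $e$ in $H$, since the subspace topology on the closed subvariety $H$ agrees with its intrinsic $p$-adic manifold topology. Any $\Q$-definable subgroup of $H$ of finite index has the same dimension as $H$, is therefore open, and so contains these infinitesimals; intersecting over all such subgroups yields $\mu_G \cap H \subseteq H^0$. Hence $x = ah \in A^0 \cdot H^0 = B^0$, as required.

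The main subtlety in this plan is the standard-part step: verifying $\st(h) \in H(M_0)$ is what allows $A \cap H = \{e\}$ to force $\st(a) = e$, and this handshake between the compact-torus factor and the dfg factor is what makes the lemma tick. Everything else is a routine application of the structure results already assembled in the paper.
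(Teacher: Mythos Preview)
Your reduction to the inclusion $\mu_G\cap B\subseteq B^0$ is exactly the paper's first move, and your argument is correct. However, the paper dispatches that inclusion in one line, whereas you take a detour through the semidirect product $B=A\rtimes H$. The paper simply observes that $\mu_G\cap B=\mu_B$ (the infinitesimals of $B$ at the identity, since $B$ is closed in $G$), and that every $M_0$-definable subgroup of $B$ of finite index has full dimension, hence is open, hence contains $\mu_B$; intersecting gives $\mu_B\subseteq B^0$ directly. This is precisely the argument you deploy in your last paragraph for $H$, so you already possessed the key observation --- you just applied it one level down instead of to $B$ itself.

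Your longer route is not wrong: the standard-part step $\st(h)\in H(M_0)$ is justified because $H$ is Zariski-closed (hence $p$-adically closed and $M_0$-definable), and the identification $\mu_G\cap A=A^0$ is recorded in the paper. But the decomposition $x=ah$, the compactness of $A$, and the standard-part handshake are all unnecessary machinery here. The lemma is purely about $B$ and its open finite-index subgroups; the fsg/dfg structure of the factors plays no role.
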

	\begin{proof}
		$bb_0^{-1} = \epsilon^{-1}\epsilon_0 \in \mu_G \cap B=\mu_B$. Since each $\emptyset$-definable  subgroup $A$ of $B$ with finite index is open, we have $\mu_B\models A$, so $\mu_B\leq B^0$, which means $b_0$ and $ b$ are in the same coset of $B^0$.
	\end{proof}

	We will freely use the above fact. Note that for any $b\in B$, and any finite-index subgroup $A$ of $B$ defianble over $M_0$, there is $b_0\in B(M_0)$ such that $bA=b_0A$. So the  coset $bB^0$  is defined by a partial type over $M_0$ for any $b\in B$.

	Let $\ia\sq S_A(M^\ext)$ and $\jh\sq S_H(M^\ext)$ be minimal subflows with $u\in \ia$ and $v\in \jh$ are idempotents. Then by Lemma \ref{IA*JH}, $\jb=\ia*\jh$ is a minimal subflow of $S_B(M^\ext)$ and $p_0=u*v\vdash B^0$ is an idempotent. By Lemma \ref{Ellis-group-in-B}, the Ellis group $\eb=p_0*\jb$ of $S_B(M^\ext)$ equals to $u*\ia*\jh$. 
	
	By Fact \ref{definably-amenable}(iv), $\tau: p\mapsto p/B^0$ is an isomorphic from $\eb$ to $B/B^0$.  Suppose that $\delta \in B$,  consider the map $l_\delta: \eb \to \eb$ defined by $p \mapsto p_\delta*p$, where $p_\delta\in \eb$ such that $p_\delta/B^0=\delta/B^0$. Then $l_\delta$ is a bijection from $\eb$ to itself, and $l_{\delta_1}=l_{\delta_2}$ iff $\delta_1/B^0=\delta_2/B^0$.  Besides, for any $\delta_1, \delta_2 \in B$, we have $l_{\delta_1} \circ l_{\delta_2} = l_{\delta_1\delta_2}$.

	By Lemma \ref{klem}, we see that $p * q \vdash \mu_GB$ for each $p\in \jb$ and $q\in S_K(M^\ext)$. We now assume that $u\in \ia$ and $v\in \jh$ are an idempotents,   $\eb=\ea*\jh$ is the Ellis group of $S_B(M^\ext)$, and   $p_0=u*v\in \eb$ is the idempotent.
	
	\begin{lem}\label{l(p0)}
		If $\delta=ah$ with $a\in A$ and  $h\in H$. Let $q\in\ea$ such that $q/A^0=a/A^0$, $p\in S_H(M^\ext)$ such that $p$ is finitely satisfiable in $M_0$ and $p/N^0=h/H^0$ . Then 
		\[
		l_\delta(p_0)=q*p*p_0.
		\]
		%where $h_0\in H(N^{**})$ such that $\tp{h_0/N^*}\vdash h/H^0$ is finitely satisfiable in $M_0$, and $a_0\in A$ such that $\tp{a_0/N^{**}}\vdash a/A^0$ is finitely satisfiable in $M_0$ and  $\tp{a_0/M^\ext}\in E_A^\ext$.
	\end{lem}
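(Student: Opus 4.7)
The plan is to show two things: that $q * p * p_0$ lies in the Ellis group $\eb$, and that its image in $B/B^0$ under the isomorphism of Fact \ref{definably-amenable}(iv) equals $\delta/B^0$. Since $p_0$ is the identity of $\eb$, we have $l_\delta(p_0) = p_\delta * p_0 = p_\delta$, so these two facts together will force $q * p * p_0 = p_\delta = l_\delta(p_0)$.

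For membership, I would invoke Lemma \ref{Ellis-gp-in-B-II} in the form $\eb = q * S_B(M^\ext) * v$ (taking the second factor to be the idempotent $v \in \jh$). Writing $q * p * p_0 = q * (p * u) * v$ and noting that $p * u$, as a product in the semigroup $S_B(M^\ext)$, again lies in $S_B(M^\ext)$, this immediately gives $q * p * p_0 \in \eb$.

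To compute the image in $B/B^0$, I would realize the type step by step in a sufficiently saturated extension $N^* \succ M^\ext$: choose $a_2 \models q$, $h_0 \models p|(M^\ext, a_2)$, $a_1 \models u|(M^\ext, a_2, h_0)$, and $h_1 \models v|(M^\ext, a_2, h_0, a_1)$, so that $q * p * p_0 = \tp{a_2 h_0 a_1 h_1 / M^\ext}$. Using normality of $A$ in $B$, I would rewrite $h_0 a_1 = (h_0 a_1 h_0^{-1}) h_0$ to obtain the canonical $A \rtimes H$ decomposition $a_2 h_0 a_1 h_1 = a' h'$ with $a' = a_2 (h_0 a_1 h_0^{-1}) \in A$ and $h' = h_0 h_1 \in H$.

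The step I expect to require the most care is identifying the $B^0$-coset of $a' h'$ with that of $\delta = ah$. Since $u$ and $v$ are idempotents of the Ellis groups $\ea$ and $\jh$, Fact \ref{definably-amenable}(iv) applied inside $A$ and $H$ forces $a_1 \in A^0$ and $h_1 \in H^0$; invariance of the characteristic subgroup $A^0$ under conjugation by $B$ then yields $h_0 a_1 h_0^{-1} \in A^0$. Combined with $q/A^0 = a/A^0$ and $p/H^0 = h/H^0$, this gives $a' A^0 = a A^0$ and $h' H^0 = h H^0$. A short direct verification, using $B^0 = A^0 \rtimes H^0$ from Lemma \ref{B0=B00} and once more the invariance of $A^0$ under $B$-conjugation, then shows $a' h' B^0 = a h B^0 = \delta B^0$, and applying the isomorphism $\eb \to B/B^0$ from Fact \ref{definably-amenable}(iv) closes the argument.
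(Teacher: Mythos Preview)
Your proof is correct, and it takes a genuinely different route from the paper's.

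For membership in $\eb$, the paper works harder: it realizes $q*p*p_0=\tp{a_0h_0a_1h_1/M^\ext}$, rewrites $a_0h_0a_1h_1=a_0\,{a_1}^{h_0}h_0h_1$, and then uses the finite satisfiability of $p$ in $M_0$ together with Lemma~\ref{kconjg} to get ${a_1}^{h_0}\in\mu_G\cap A=A^0$. It then uses right $A^0$-invariance of the generic type of $a_0$ to absorb ${a_1}^{h_0}$ and split the product as $\tp{a_0/M^\ext}*\tp{h_0h_1/M^\ext}$, landing in $\ea*\jh=\eb$ via Lemma~\ref{Ellis-group-in-B}. You bypass all of this by quoting Lemma~\ref{Ellis-gp-in-B-II} in the form $\eb=q*S_B(M^\ext)*v$ and writing $q*p*p_0=q*(p*u)*v$.

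For the coset computation, the paper again relies on Lemma~\ref{kconjg} (hence on the finite-satisfiability hypothesis) to see that conjugating $a_1\in A^0$ by $h_0$ stays in $A^0$. You instead observe that $A^0=A^{00}$ is characteristic in $A$ and therefore normal in $B$, which gives $h_0a_1h_0^{-1}\in A^0$ for free. This is legitimate: conjugation by any $b\in B$ is a definable automorphism of the normal subgroup $A$, and such automorphisms permute the type-definable bounded-index subgroups of $A$, hence fix $A^{00}$.

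The upshot is that your argument never invokes the hypothesis that $p$ is finitely satisfiable in $M_0$, so you have in fact proved a slightly stronger statement. The paper's approach, while heavier here, has the virtue that Lemma~\ref{kconjg} is exactly the tool needed again in Lemma~\ref{kb}, where conjugation by an element of $H$ realizing a coheir over a larger model must land in $\mu_G$ rather than merely in $A^0$; so the paper is previewing a technique that becomes essential later.
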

	
	\begin{proof}
		Assume that  $p_0=u*v$ with $u$ and $v$ be idempotents of $\ea$ and $\jh$ respectively. Let $N^*$ be an $|M|^+$-sarurated extension of $M^\ext$, $a_1\in A(N^*)$ and $h_1\in H(N^*)$ such that $a_1\models u$ and $h_1\models v|(M^\ext, a_1)$. Let $N^{**}$ be an $|N^*|^+$-sarurated extension of $N^*$. Take $a_0\in A$ realizing the generic extension of $q$ over $N^{**}$. Take $h_0\in H(N^{**})$ realizing  $p$ such that   $\tp{h_0/N^*}$ is finitely satisfiable in $M_0$. Then 
		\[
		q*p*p_0=\tp{a_0h_0a_1h_1/M^\ext}=\tp{a_0{a_1}^{h_0}h_0h_1/M^\ext}.
		\]
		Since $\tp{h_0/N^*}$ is finitely satisfiable in $M_0$, we see that \[{a_1}^{h_0}\in (\mu_G(N^*))^{h_0}\cap A(N^{**})\leq \mu_G\cap A(N^{**})=A^0(N^{**})\]
		by Lemma \ref{kconjg}. Now $\tp{a_0/N^{**}}$ is a generic type and thus $A^0(N^{**})$-invariant under the right action. So $\tp{a_0/N^{**}}=\tp{a_0{a_1}^{h_0}/N^{**}}$ is finitely satisfiable in $M_0$. We conclude that
		\begin{align*}
			\tp{a_0{a_1}^{h_0}h_0h_1/M^\ext}&=\tp{a_0{a_1}^{h_0}/M^\ext}*\tp{h_0h_1/M^\ext}\\
			&=\tp{a_0/M^\ext}*\tp{h_0h_1/M^\ext},
		\end{align*}
		which is in $\eb$ By Lemma \ref{Ellis-group-in-B}. Since 
		\[\tp{a_0/M^\ext}*\tp{h_0h_1/M^\ext}\vdash a_0h_0/B^0=ah/B^0=\delta/B=l_\delta(p_0)/B,\]
		we have 
		\[
		q*p*p_0=\tp{a_0/M^\ext}*\tp{h_0h_1/M^\ext}=l_\delta(p_0).
		\]
		
	\end{proof}

	\begin{lem}\label{dgroup}
		If $p = l_{\delta}(p_0)$ and $p^\prime = l_{\delta^\prime}(p_0)$, then $p * p^\prime = l_{\delta\delta^\prime}(p_0)$.
	\end{lem}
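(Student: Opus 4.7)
The plan is to leverage the isomorphism $\tau \colon \eb \to B/B^0$, $q \mapsto q/B^0$, furnished by Fact \ref{definably-amenable}(iv) (recall that $B^{00} = B^0$ by Lemma \ref{B0=B00}). Since $\eb$ is a group and $p_0 = u*v$ is one of its idempotents, $p_0$ is in fact the identity of $\eb$; hence for every $\delta \in B$ one has $l_\delta(p_0) = p_\delta * p_0 = p_\delta$, and so $\tau(l_\delta(p_0)) = \delta/B^0$.

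From $p, p' \in \eb$ and the fact that $\eb$ is closed under $*$ (as a group), we obtain $p * p' \in \eb$, so we may compare $p*p'$ and $l_{\delta\delta'}(p_0)$ via $\tau$. Using that $\tau$ is a group homomorphism,
\[
\tau(p*p') \;=\; \tau(p)\cdot\tau(p') \;=\; (\delta/B^0)(\delta'/B^0) \;=\; \delta\delta'/B^0 \;=\; \tau\bigl(l_{\delta\delta'}(p_0)\bigr),
\]
and by injectivity of $\tau$ the two Ellis-group elements must coincide, which is exactly the claim.

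The only point that could cause concern is whether $\tau$ is truly a homomorphism rather than merely a bijection, but that is precisely the content of Fact \ref{definably-amenable}(iv), the Chernikov--Simon theorem for definably amenable $NIP$ groups; I do not expect any real obstacle here. As an alternative, one could bypass $\tau$ and prove the lemma by a direct computation: writing $\delta = ah$ and $\delta' = a'h'$ with $a, a' \in A$ and $h, h' \in H$, so that $\delta\delta' = aa'\,h^{a'} h'$ in the semidirect product, and then applying Lemma \ref{l(p0)} on both sides and simplifying $p * p'$ by using finite satisfiability of the chosen $H$-types in $M_0$ together with Lemma \ref{kconjg} (to pass conjugated $\mu_G$-factors back into $\mu_G$) and Corollary \ref{group-structure-in-EB} (to read off the quotient $B/B^0$-data of the product). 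This second route would essentially re-derive the homomorphism property of $\tau$ and is no shorter, so I would follow the abstract path.
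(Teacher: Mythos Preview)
Your proof is correct and is essentially the same as the paper's: both arguments identify the elements $p$, $p'$, $p*p'$, and $l_{\delta\delta'}(p_0)$ by their images in $B/B^0$ and then appeal to Fact~\ref{definably-amenable}(iv). The only cosmetic difference is that the paper observes directly that $p\vdash \delta/B^0$ and $p'\vdash \delta'/B^0$ force $p*p'\vdash \delta\delta'/B^0$ (by normality of $B^0$ and the definition of $*$), whereas you package this same step as ``$\tau$ is a homomorphism''; your extra remark that $p_0$ is the identity of $\eb$ so that $l_\delta(p_0)=p_\delta$ is a nice clarification but not needed for the argument.
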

	\begin{proof}
		Since $p\vdash \delta/B^0$ and  $p\vdash \delta'/B^0$, we have $p*p'\vdash \delta\delta'/B^0$. On the other side $l_{\delta\delta^\prime}(p_0)\vdash \delta\delta'/B^0$, which implies that  $p*p'=l_{\delta\delta^\prime}(p_0)$ by Fact \ref{definably-amenable}(iv).
	\end{proof}

	\begin{lem}\label{kb}
		Let $q \in S_G(M^\ext)$ such that $p_0 * q \vdash \mu_G\delta_0 B^0$, let $p_1 = l_\delta(p_0)$, where $\delta_0, \delta \in B$. Then $p_1 * q \vdash \mu_G\delta\delta_0B^0 $.
	\end{lem}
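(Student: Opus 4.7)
The strategy is to expand $p_1$ via Lemma \ref{l(p0)} and reduce the statement to a direct calculation on a single realization of the product. Writing $\delta = ah$ with $a \in A$ and $h \in H$, Lemma \ref{l(p0)} furnishes $q_a \in \ea$ with $q_a/A^0 = a/A^0$ and $p_h \in S_H(M^\ext)$ finitely satisfiable in $M_0$ with $p_h/H^0 = h/H^0$ such that $p_1 = q_a * p_h * p_0$, so $p_1 * q = q_a * p_h * (p_0 * q)$.

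In a sufficiently saturated $\M^* \succ M^\ext$, I would choose realizations as follows: first take $g_0 \models p_0 * q$; by hypothesis $g_0 = \epsilon_0 \delta_0 c_0$ for some $\epsilon_0 \in \mu_G$ and $c_0 \in B^0$. Then realize $h_1 \models p_h$ over $(M^\ext, g_0)$ as an $M_0$-coheir (possible since $p_h$ is finitely satisfiable in $M_0$), and $a_1 \models q_a$ over $(M^\ext, h_1, g_0)$ as an $M_0$-coheir. Exactly as in the proof of Lemma \ref{l(p0)}, the joint type $\tp{a_1 h_1 g_0/M^\ext}$ still equals $q_a * p_h * (p_0 * q) = p_1 * q$; this uses that both $q_a$ and $p_h$ are $M^\ext$-definable and $M_0$-finitely satisfiable, so their global $M_0$-coheir extensions are generically stable and commute with the heir of $p_0 * q$.

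The rest is a chain of conjugations. Since $\tp{h_1/M^\ext, g_0}$ is finitely satisfiable in $M_0$, so is $\tp{h_1^{-1}/M^\ext, g_0}$ ($M_0$ is closed under group inversion), and Lemma \ref{kconjg} yields $h_1 \epsilon_0 h_1^{-1} \in \mu_G$. Using the definable compactness of $A$, decompose $a_1 = a_0 \alpha$ with $a_0 = \st(a_1) \in A(M_0) \subseteq V_G$ and $\alpha \in A^0 \subseteq \mu_G$; since $V_G \leq N_G(\mu_G)$, I slide every infinitesimal factor to the left of $a_0$, producing $a_1 h_1 g_0 = \epsilon^* a_0 h_1 \delta_0 c_0$ for some $\epsilon^* \in \mu_G$. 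Finally, from $a_0/A^0 = a/A^0$, $h_1/H^0 = h/H^0$, and $A^0 \lhd B$ we get $a_0 h_1 \in \delta B^0$, and then $B^0 \lhd B$ gives $a_0 h_1 \delta_0 c_0 \in \delta \delta_0 B^0$. Therefore $a_1 h_1 g_0 \in \mu_G \delta \delta_0 B^0$, as required.

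The main obstacle is the type-theoretic step verifying that the ``reverse'' realization order (coheirs taken over $g_0$ rather than in the default semigroup order) still realizes the product $q_a * p_h * (p_0 * q)$; once this is granted, the rest is a clean computation combining Lemma \ref{kconjg}, the standard-part decomposition $A = A(M_0) A^0$, and the normality of $A^0$ and $B^0$ in $B$.
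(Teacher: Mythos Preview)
Your approach is essentially the paper's: expand $p_1$ via Lemma~\ref{l(p0)} as $q_a * p_h * p_0$, realize the product, push the infinitesimal $\epsilon_0$ past $h_1$ using Lemma~\ref{kconjg}, and then past the $A$-factor using $A\subseteq V_G\leq N_G(\mu_G)$. Two minor corrections: the reverse-order realization is justified simply by heir--coheir duality over $M^\ext$ (an $M_0$-coheir is in particular an $M^\ext$-coheir, so the partner's type is the heir), not by generic stability, which need not hold for $q_a$ or $p_h$; and to invoke Lemma~\ref{kconjg} you need $\tp{h_1/M_0,\epsilon_0}$ finitely satisfiable in $M_0$, so realize $h_1$ (and then $a_1$) as $M_0$-coheirs over a saturated $N^*\ni g_0,\epsilon_0,c_0$ rather than merely over $(M^\ext,g_0)$ --- this is exactly what the paper does, and with it your standard-part decomposition of $a_1$ becomes unnecessary, since $a_1\in A\subseteq V_G$ already normalizes $\mu_G$.
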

	\begin{proof}
		Let $N^{**}\succ N^*\succ M$, where $N^*$ is $|M|^+$-saturated and $N^{**}$ is $|N^*|^+$-saturated. Let $\delta=ah$ with $a\in A$ and $h\in H$.

		Let and $a_0\in A$ such that $\tp{a_0/M^\ext}\in E_A^\ext$ and $\tp{a_0/N^{**}}\vdash a/A^0$ is finitely satisfiable in $M_0$. As $h/H^0$ is a partial type over $M_0$, there is $h_0\in H(N^{**})$ such that $\tp{h_0/N^*}\vdash h/H^0$ is finitely satisfiable in $M_0$. Without loss of generality, we may assume that $\delta,\delta_0\in B^(N^*)$.
		
		By Lemma \ref{l(p0)}, 
		\[
		l_\delta(p_0)=\tp{a_0/M^\ext}*\tp{h_0/M^\ext}*p_0.
		\]
		Let $b\in B(N^*)$ and $g\in G(N^*)$ such that  $b\models p_0$ and $g\models q|(M^\ext,b)$, then there are $c\in \mu_G(N^*)$ and $b'
		\in \delta_0B^0(N^*)$ such that
		$bg=cb'$. Now 
		\[
		l_\delta(p_0)*q=\tp{a_0/M^\ext}*\tp{h_0/M^\ext}*p_0*q=\tp{a_0h_0cb'/M^\ext}.
		\]By Lemma \ref{kconjg}, ${c}^{h_0}\in \mu_G(N^{**})$ and thus
		\begin{align*}
			a_0h_0cb' = a_0{c}^{h_0}h_0b' \in a_0\mu_Gh_0\delta_0B^0=\mu_Ga_0h_0\delta_0B^0=\mu_G\delta\delta_0B^0.
		\end{align*}
		This completes the proof.
	\end{proof}
	
	Take a generic type $q_M\in S_K(M^\ext)$ such that $q_M\vdash K^0$. We fix some $\delta_0\in B$ such that  $p_0*q_M\vdash K^0\delta_0 B^0$. By Lemma \ref{kb} we have:

	\begin{cor}\label{left-action-on-idempotent}
		Write $p_M = l_{\delta_0^{-1}}(p_0)$. Then $l_{\delta}(p_M) * q_M \vdash \mu_GB^0\delta$ for each $\delta\in B$. Particularly, $p_M*q_M\vdash \mu_G B^0=K^0B^0$
	\end{cor}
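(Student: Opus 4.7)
The plan is to derive this as an essentially immediate application of Lemma \ref{kb} after rebasing the idempotent $p_0$ via the left-translation operator $l_{\delta_0^{-1}}$. The key observation is the composition identity $l_{\delta_1}\circ l_{\delta_2} = l_{\delta_1\delta_2}$ recorded in the paragraph defining $l_\delta$, which lets me rewrite $l_\delta(p_M)$ as a single left-translate of $p_0$ of the form to which Lemma \ref{kb} directly applies.

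Concretely, I would first compute
\[
l_\delta(p_M) \;=\; l_\delta\bigl(l_{\delta_0^{-1}}(p_0)\bigr) \;=\; l_{\delta\delta_0^{-1}}(p_0).
\]
Then I would apply Lemma \ref{kb} with $q = q_M$ and with the element labeled ``$\delta$'' in that lemma taken to be $\delta\delta_0^{-1}$. The hypothesis $p_0*q_M\vdash \mu_G\delta_0 B^0$ is exactly the defining property of $\delta_0$ (using $K^0 = \mu_G$), so the lemma yields
\[
l_\delta(p_M)*q_M \;=\; l_{\delta\delta_0^{-1}}(p_0)*q_M \;\vdash\; \mu_G(\delta\delta_0^{-1})\delta_0 B^0 \;=\; \mu_G\delta B^0.
\]
Since $B^0$ is normal in $B$ (as it has finite index in $B$ and is $\emptyset$-definable), $\mu_G\delta B^0 = \mu_G B^0\delta$, giving the first assertion.

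For the ``particular'' clause, I would set $\delta = \id_B$, which gives $p_M*q_M = l_{\id_B}(p_M)*q_M \vdash \mu_G B^0$; the identification $\mu_G B^0 = K^0 B^0$ then follows from $K^0 = K^{00} = \mu_G$, as recorded earlier (using that $K$ is fsg and open in $G$ over $M_0$). I do not anticipate any genuine obstacle: the entire content is captured by Lemma \ref{kb}, and the only role of $p_M$ is to cancel the ``phase shift'' $\delta_0$ introduced by that lemma's conclusion, so that the resulting coset tracks $\delta$ itself.
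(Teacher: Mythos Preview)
Your argument is correct and matches the paper's approach exactly: the paper simply writes ``By Lemma~\ref{kb} we have'' before stating this corollary, and your proposal spells out precisely the substitution $\delta\mapsto\delta\delta_0^{-1}$ in Lemma~\ref{kb} that makes this work. One small quibble: your parenthetical justification that $B^0$ is normal ``as it has finite index in $B$ and is $\emptyset$-definable'' is not quite right (neither property holds in general, nor would they imply normality); the correct reason, already recorded in the paper, is that connected components are always normal (indeed $B/B^0\cong T/T^0$ is abelian here), but the conclusion $\delta B^0 = B^0\delta$ stands.
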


	We now prove that $q_M * p_M$ is an idempotent in a minimal subflow. We will  denote the space of generic types in $S_K(M^\ext)$ by $\ik$.

	%\begin{lem}\label{qp}
	%Suppose $q \in \ik$, $p  \in \eb$. If  $p = l_{\delta}(p_M)$ for some  $\delta \in B$. Then we have $q_1 * p * q_M * p^\prime = q_1 * l_{\delta}(p^\prime)$.
	%\end{lem}

	\begin{lem}\label{qpqp}
		Suppose $q_1 \in \ik$, $p, p^\prime \in \eb$. If  $p = l_{\delta}(p_M)$ for some  $\delta \in B$. Then we have $q_1 * p * q_M * p^\prime = q_1 * l_{\delta}(p^\prime)$.
	\end{lem}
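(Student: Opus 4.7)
The plan is to realize the types in $q_1\ast p\ast q_M\ast p'$ in a carefully chosen order within a tower of saturated extensions, apply Lemma \ref{klem} to decompose the middle product $bg$, and absorb the resulting $\mu_G$-factor into $q_1$ using its right-$\mu_G$-invariance.

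Concretely, pick $N^\ast \prec N^{\ast\ast}$ over $M^\ext$ with $N^\ast$ being $|M|^+$-saturated and $N^{\ast\ast}$ being $|N^\ast|^+$-saturated. Realize $b \models p$, $g \models q_M \mid (M^\ext, b)$, and $c \models p' \mid (M^\ext, b, g)$ inside $N^\ast$, and choose $a \in N^{\ast\ast}$ realizing $q_1$ such that $\tp{a/N^\ast}$ is finitely satisfiable in $M$. Then $\tp{abgc/M^\ext} = q_1 \ast p \ast q_M \ast p'$. Because $p \in \eb$ is $B^0$-invariant, hence $H^0(M)$-invariant, the convolution $p \ast q_M$ is $H^0(M)$-invariant. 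Applying Lemma \ref{klem} to $bg \models p \ast q_M$ produces $\epsilon \in \mu_G \cap \dcl{M, b, g}$ and $b^\ast \in B \cap \dcl{M, b, g}$ with $bg = \epsilon b^\ast$; the containment $bg \in \mu_G\delta B^0$ from Corollary \ref{left-action-on-idempotent}, combined with $\mu_G \cap B = \mu_B \leq B^0$, forces $b^\ast \in \delta B^0$. So $abgc = a\epsilon b^\ast c$, and since $\epsilon \in \mu_G(N^\ast)$ and $\tp{a/N^\ast}$ is right-$\mu_G(N^\ast)$-invariant by Fact \ref{fsg}(ii), one gets $\tp{a\epsilon b^\ast c/M^\ext} = \tp{ab^\ast c/M^\ext}$.

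To finish, compare with the realization $ad_1d_2$ of $q_1 \ast l_\delta(p') = q_1 \ast p_\delta \ast p'$, where $d_1 \models p_\delta \mid (M^\ext, a)$ and $d_2 \models p' \mid (M^\ext, a, d_1)$. Since restrictions of heirs are heirs and every type over $M^\ext$ is definable, one has $\tp{c/(M^\ext, a, b^\ast)} = p' \mid (M^\ext, a, b^\ast)$, reducing the required identity to $\tp{b^\ast/(M^\ext, a)} = p_\delta \mid (M^\ext, a)$. Because $\tp{a/N^\ast}$ is finitely satisfiable in $M$ and $b^\ast \in N^\ast$, coheir-heir symmetry makes $\tp{b^\ast/(M^\ext, a)}$ the heir of $\tp{b^\ast/M^\ext}$, so the identity further reduces to $\tp{b^\ast/M^\ext} = p_\delta$. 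Given $b^\ast/B^0 = \delta/B^0$, this follows from Fact \ref{definably-amenable}(iv), which identifies Ellis-group elements of $\eb$ by their $B^0$-cosets, provided one first knows $\tp{b^\ast/M^\ext} \in \eb$.

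The principal obstacle is precisely this membership $\tp{b^\ast/M^\ext} \in \eb$: Lemma \ref{klem} produces $b^\ast$ via a Skolem-definable decomposition of $bg$, and a priori $\tp{b^\ast/M^\ext}$ might only lie in the minimal subflow $\jb$ rather than in $\eb$. My proposed resolution is to factor $p = p_\delta \ast p_M$, decompose $b_M g = \epsilon_1 b_1$ via Lemma \ref{klem} applied to $p_M \ast q_M \vdash \mu_G B^0$ (so $b_1 \in B^0$), and track the coset structure through $bg = b_\delta \epsilon_1 b_1$. The non-trivial ingredient is commuting $\epsilon_1$ past $b_\delta$, for which I would invoke Lemma \ref{kconjg} applied to the $M_0$-finitely-satisfiable fsg-component of $\tp{b_\delta/M^\ext}$ inherited from the splitting $\eb = \ea \ast \jh$. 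Apart from this, the remaining work is careful bookkeeping of heir and coheir relations across $N^\ast$ and $N^{\ast\ast}$.
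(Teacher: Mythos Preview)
Your setup and the reduction to the question ``is $\tp{b^\ast/M^\ext}\in\eb$?'' are correct, and you are right that this is the crux. But your proposed resolution does not go through. Writing $b_\delta=a_\delta h_\delta$ along $\eb=\ea*\jh$, Lemma~\ref{kconjg} indeed lets you conjugate $\epsilon_1\in\mu_G$ past $a_\delta$, since $\tp{a_\delta/M_0,\epsilon_1}$ is finitely satisfiable in $M_0$. The problem is the $H$-component: $\tp{h_\delta/M^\ext}\in\jh$ is a dfg type, not finitely satisfiable in $M_0$, and $H$ has no reason to normalize $\mu_G=K^0$. So the step ``commute $\epsilon_1$ past $b_\delta$'' breaks down exactly at $h_\delta$, and nothing in the paper's toolkit repairs it.

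The paper sidesteps this obstacle by a different and much lighter device. After absorbing $\epsilon\in\mu_G$ into $q_1$ (your argument) and the $B^0$-factor into $p'$ (using left $B^0$-invariance of $p'\in\eb$), one is reduced to $q_1*\tp{\delta/M^\ext}*p'$ with $\delta\in B$. Now insert the idempotent $u\in\ea$: since $u\vdash A^0\subseteq K^0$ and $q_1\in\ik$ is generic (hence right $K^0$-invariant), $q_1*u=q_1$, so
\[
q_1*\tp{\delta/M^\ext}*p'=q_1*\bigl(u*\tp{\delta/M^\ext}*p'\bigr).
\]
By Lemma~\ref{Ellis-gp-in-B-II} one has $u*S_B(M^\ext)*p'\subseteq\eb$, so $u*\tp{\delta/M^\ext}*p'\in\eb$ for free; comparing $B^0$-cosets via Fact~\ref{definably-amenable}(iv) then identifies it with $l_\delta(p')$. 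No commutation of infinitesimals past elements of $B$ is needed. I would replace your final paragraph with this $u$-insertion; the rest of your argument then completes the proof.
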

	\begin{proof}
		By Corollary \ref{left-action-on-idempotent}, $p * q_M\vdash\mu_G\delta B^0$. Without loss of generality, we main assume that 
		\[
		q_1* p * q_M * p^\prime=q_1*\tp{\epsilon \delta/M^\ext} * p^\prime
		\]
		for some $\epsilon \in \mu_G$. It is easy to see that $q_1*\tp{\epsilon \delta/M^\ext}=q_1*\tp{\delta/M^\ext}$ since $q_1$ is generic. 
		
		Let $u\in\ea$, then by Lemma \ref{Ellis-gp-in-B-II}, $u*S_B(M^\ext)*p'\sq \eb$. We assume that $u\vdash A^0\sq \mu_G$ is an idempotent. So $q_1*u=q_1$. We have
		\[
		q_1* p * q_M * p^\prime=q_1*\tp{\delta/M^\ext}*p'=q_1*(u* \tp{\delta/M^\ext}* p^\prime).
		\]
		Assume that $p'\vdash \delta'B^0$, then 
		we have  $u* \tp{\delta/M^\ext}* p^\prime\vdash \mu_G \delta\delta'B^0$. Since   both $u* \tp{\delta/M^\ext} * p^\prime$ and $l_\delta(p')$  are in $\eb$, and 
		\[
		l_\delta(p')/B^0= (\delta\delta')/B^0=(u* \tp{\delta/M^\ext} * p^\prime)/B^0,
		\]
		we conclude that $l_\delta(p')=u* \tp{\delta/M^\ext} * p^\prime$ by Fact \ref{definably-amenable}. Thus
		\[
		q_1* p * q_M * p^\prime=q_1*(u* \tp{\delta/M^\ext} * p^\prime)=q_1*l_\delta(p')
		\]
		as required.
	\end{proof}

	%\begin{cor}\label{ij}
	%Suppose $q_1 \in \i$ and $p_1 \in \j$. Then $q_1 * p_1 * q_M * p_M^\prime = q_1 * p_1$.
	%\end{cor}
	
	By Lemma \ref{qpqp}, we see immediately that
	\begin{cor}
		$q_M * p_M  * q_M * p_M = q_M * p_M$. Namely, $q_M * p_M$ is an idempotent
	\end{cor}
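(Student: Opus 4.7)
The plan is to deduce this directly from Lemma \ref{qpqp} by taking $q_1 = q_M$, $p = p_M$, and $p' = p_M$. To invoke that lemma I need to exhibit some $\delta \in B$ such that $p_M = l_\delta(p_M)$; the natural candidate is $\delta = e$, the identity of $B$.

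First I would verify that $l_e(p_M) = p_M$. By definition $l_e(p_M) = p_e * p_M$, where $p_e$ is the unique element of $\eb$ with $p_e/B^0 = e/B^0$. Since $p_0 = u*v$ is an idempotent of the group $\eb$, it projects to the identity of $B/B^0$, so $p_0/B^0 = e/B^0$. As $\tau : \eb \to B/B^0$ is a group isomorphism by Fact \ref{definably-amenable}(iv), we get $p_e = p_0$. Moreover $p_0$ is the identity element of the group $\eb$, and $p_M = l_{\delta_0^{-1}}(p_0) \in \eb$, so $p_0 * p_M = p_M$. This gives $l_e(p_M) = p_M$.

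With this in hand, Lemma \ref{qpqp} applied to $q_1 = q_M \in \ik$, $p = p_M = l_e(p_M)$, $p' = p_M$ yields
\[
q_M * p_M * q_M * p_M \;=\; q_M * l_e(p_M) \;=\; q_M * p_M,
\]
which is precisely the claim. There is no real obstacle since the substantive content is packaged inside Lemma \ref{qpqp}; the only point requiring care is the observation that the idempotent $p_0$ of the Ellis group $\eb$ serves as the identity element, so that $p_M$ can be written tautologically as $l_e(p_M)$ to match the hypothesis of that lemma.
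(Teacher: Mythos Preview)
Your proof is correct and takes essentially the same approach as the paper: the paper simply remarks that the corollary follows ``immediately'' from Lemma~\ref{qpqp}, and you have carefully spelled out the instantiation $q_1 = q_M$, $p = p' = p_M$ with $\delta = e$, together with the verification that $l_e(p_M) = p_0 * p_M = p_M$ since $p_0$ is the identity of $\eb$.
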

	
	We now show  that the subflow  $S_G(M^\ext) * q_M * p_M$  generated by  $q_M * p_M$ is minimal. We use $KA$ to denote the definable set $\{ka|\ k\in K,\ a\in A\}$.

	\begin{lem}\label{kj}
		Suppose that $p\in\jh$. Then 
		\[
		S_G(M^\ext)*p\sq S_{KA}(M^\ext) *\jh.
		\]
	\end{lem}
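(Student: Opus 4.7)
The plan is to show, for an arbitrary $q\in S_G(M^\ext)$, that $q*p$ can be written as a product $r*s$ with $r\in S_{KA}(M^\ext)$ and $s\in\jh$. Work in a sufficiently saturated extension of $M^\ext$; pick $g\models q$ and $h\models p|(M^\ext,g)$, so that $q*p=\tpo{gh}{M^\ext}$. Using the Iwasawa decomposition $G=KB$ combined with $B=A\rtimes H$, every element of $G$ factors as $(ka)h_0$ with $ka\in KA$ and $h_0\in H$; since $\pcf$ has definable Skolem functions by \cite{Dries-defskl}, this factorization is definable, so we may take $c:=ka\in KA$ and $h_0\in H$ with $c,h_0\in\dcl{M,g}$, giving $gh=c(h_0h)=:ch^*$. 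Set $r:=\tpo{c}{M^\ext}\in S_{KA}(M^\ext)$ and $s:=\tpo{h^*}{M^\ext}$; the aim is to prove $s\in\jh$ and $\tpo{gh}{M^\ext}=r*s$.

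For the first point, since $h_0\in\dcl{M^\ext,g}$ and $h$ realizes the heir of $p$ over $(M^\ext,g)$, restriction of heirs gives that $\tpo{h}{M^\ext,h_0}$ is the heir of $p$ over $(M^\ext,h_0)$. By the recipe for the semigroup operation, $s=\tpo{h_0}{M^\ext}*p$. Since $\jh$ is a minimal subflow of $S_H(M^\ext)$ containing $p$, it is a minimal left ideal, so $S_H(M^\ext)*p=\jh$ and hence $s\in\jh$.

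For the second point, we must check that $h^*$ realizes the heir of $s$ over $(M^\ext,c)$. The strategy is to reduce to the heir condition for $h$ over $(M^\ext,c,h_0)$, which itself is a restriction of the heir condition for $h$ over $(M^\ext,g)$: given any $L_{M^\ext}$-formula $\phi(x,y)$, set $\psi(y_0,y_1,z):=\phi(zy_0,y_1)$, so that $\phi(h^*,c)\iff\psi(h,c,h_0)\iff d_p\psi(c,h_0)$, and the task reduces to matching $d_p\psi(c,h_0)$ with $d_s\phi(c)$. Since $s=\tpo{h_0}{M^\ext}*p$, its defining scheme is computable from $d_p$ together with $\tpo{h_0}{M^\ext}$, and the definability and $H^0$-invariance of dfg types (Fact \ref{dfg}) should make the identity go through.

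Verifying this defining-scheme identity is the main obstacle. If it proves awkward to do directly, a fallback is to pass to a still larger $|N^*|^+$-saturated extension $N^{**}$, pick $c^\star$ realizing the coheir of $r$ over $(M^\ext,g,h)$ together with a fresh $h^{**}$ realizing the heir of $s$ over $(M^\ext,c^\star)$, and argue $\tpo{c^\star h^{**}}{M^\ext}=\tpo{ch^*}{M^\ext}$ in the spirit of the proofs of Lemmas \ref{IA*JH} and \ref{kb}, absorbing the discrepancy between the two configurations into $\mu_G$ via Lemma \ref{kconjg} and eliminating it modulo $B^0$ via Lemma \ref{ctp}.
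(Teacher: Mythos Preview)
Your approach is the same as the paper's: decompose a realization of $q$ as $g=ch_0$ with $c\in KA$ and $h_0\in H$, then show $q*p=\tpo{c}{M^\ext}*\tpo{h_0h}{M^\ext}$ with the second factor in $\jh$. Your argument for $s=\tpo{h_0h}{M^\ext}\in\jh$ is fine.

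Where you stall, however, the paper finishes in one line, and your proposed workarounds miss the point. The ``main obstacle'' you describe---verifying that $h^*=h_0h$ realizes the heir of $s$ over $(M^\ext,c)$---is exactly the content of Fact~\ref{B(Mext)-has-dfg}(ii), and that is what the paper invokes. Concretely: take $h$ to realize $p|N^*$ for an $|M|^+$-saturated $N^*\succ M^\ext$ containing $c,h_0$ (you may do so, since $\dcl{M,g}=\dcl{M,c,h_0}$ and the heir is unique). Fact~\ref{B(Mext)-has-dfg}(ii) says that any $H(N^*)$-translate of $p|N^*$ is again the heir over $N^*$ of a type in $S_H(M^\ext)*p$; applying this to the translate by $h_0$ gives that $\tpo{h_0h}{N^*}$ is the heir over $N^*$ of some $q^*\in\jh$, and restriction to $M^\ext$ shows $q^*=s$. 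Since $c\in N^*$, restriction of heirs yields $\tpo{h_0h}{M^\ext,c}=s|(M^\ext,c)$, and the factorization $q*p=r*s$ is immediate. Your defining-scheme identity $d_s\phi(c)\leftrightarrow d_p\psi(c,h_0)$ is precisely a restatement of this fact, so there is nothing further to check once you cite it.

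Your fallback---passing to coheirs of $r$, invoking $\mu_G$ via Lemma~\ref{kconjg}, and cancelling modulo $B^0$ via Lemma~\ref{ctp}---is both unnecessary and misdirected. The question at hand is purely about heirs of definable $f$-generic types in the dfg group $H$; the infinitesimal and compact structure of $G$ play no role, and those lemmas would not yield an equality of types in any case, only a congruence modulo $B^0$.
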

	\begin{proof}
		Let $N^*$ be an $|M|^+$-saturated extension of $M^\ext$. Let $s \in S_G(M^\ext)$. Take $k_0\in K(N^*)$,  $b_0\in B(N^*)$ and $b\in B$ such that $k_0b_0 \models s$  and $h \models p  | N^*$. Then:
		$$s * p  = \tpo{k_0b_0h}{M^\ext}.$$
		Suppose that $b_0 = a_0h_0$ with $a_0\in A(N^*)$ and $h_0\in H(N^*)$, then $s * p$ is $\tpo{k_0a_0h_0h}{M^\ext}$.
		
		By Fact \ref{B(Mext)-has-dfg}, the heir $p| N^*$ is an $f$-generic type in $S_B(N^*)$ and any $B(N^*)$-translate of $p| N^*$ is $f$-generic and definable over $M$, so there is $q^*\in \jh$ such that $\tpo{h_0h}{N^*}=q^*|N^*$. Therefore we have:
		\begin{align*}
			s * p  &= \tpo{k_0a_0h_0h}{M^\ext}\\
			&=\tpo{k_0a_0}{M^\ext}*\tpo{h_0h}{M^\ext}\\
			&\in S_{KA}(M^\ext)*S_H(M^\ext).
		\end{align*}
		Therefore $S_G(M^\ext) * p \subseteq S_{KA}(M^\ext) * \jh$.
	\end{proof}

	\begin{pro}\label{AP}
		Suppose $s \in S_G(M^\ext)$, then 
		\[
		q_M * p_M \in S_G(M^\ext) * s * q_M * p_M=\cl{G(M)\cdot (s * q_M * p_M)}.
		\]
		Consequently, $S_G(M^\ext) * q_M * p_M$ is a minimal subflow.
	\end{pro}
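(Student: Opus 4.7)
The inclusion $q_M*p_M \in S_G(M^\ext)*s*q_M*p_M$ already implies the minimality statement: for any element $r = t*q_M*p_M$ of $S_G(M^\ext)*q_M*p_M$, applying the inclusion with $s := t$ shows $q_M*p_M$ lies in the closure of the $G(M)$-orbit of $r$, so that closure equals $S_G(M^\ext)*q_M*p_M$, witnessing the density of every orbit in this set. So my plan is to produce, for arbitrary $s$, an explicit $t \in S_G(M^\ext)$ with $t*s*q_M*p_M = q_M*p_M$.

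The idea is to build $t$ so that it cancels the contribution of $s$ both on the $K$-side (by absorbing it into a generic of $K$) and on the $B$-side (by an explicit inversion modulo $B^0$). Pick $g \models s$ in a sufficiently saturated $N^* \succ M^\ext$, and use $\pcf$'s definable Skolem functions to write $g = k_g b_g$ via Iwasawa with $k_g \in K$ and $b_g \in B$ both in $\dcl{M^\ext, g}$. By the observation following Lemma \ref{ctp}, the coset $b_g B^0$ is defined by a partial type over $M_0$, so there is $\delta_g \in B(M_0)$ with $b_g B^0 = \delta_g B^0$. Take $t := q_M * \tp{\delta_g^{-1}/M^\ext}$. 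Realizing $t*s*q_M*p_M$ stepwise along successive heir/coheir extensions with $k_0, k \models q_M$ and $b \models p_M$ yields the realization $k_0\,\delta_g^{-1}\,k_g\,b_g\,k\,b$. Applying Iwasawa to each mixed factor ($\delta_g^{-1}k_g$ and $b_g k$), using Lemma \ref{kconjg} to absorb generic-$K$-conjugates of $\mu_G$-elements back into $\mu_G$, invoking Lemma \ref{qpqp} and Corollary \ref{left-action-on-idempotent} to handle the right-action of $q_M$ on $l_\delta(p_M)$, and using the commutativity of $B/B^0 \cong T/T^0$ to rearrange $B$-factors at the coset level, one expects the product to end up in $q_M*\eb$ and to share its $B^0$-coset with $q_M*p_M$ thanks to the key cancellation $\delta_g^{-1} b_g \in B^0$; by Fact \ref{definably-amenable}(iv), this forces the desired equality.

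The main obstacle is the $B^0$-coset arithmetic across the multiple Iwasawa factorizations: since $B^0$ is not normal in $G$, there is no ambient group quotient $G/B^0$, so one must carefully track which side each $B$-factor lives on. The commutativity of $B/B^0$ and the conjugation-absorption provided by Lemma \ref{kconjg} together make the bookkeeping tractable: every generic-$K$-conjugation of a $\mu_G$-element is again in $\mu_G$, and every reordering of $B$-factors is invisible modulo $B^0$. Once $t*s*q_M*p_M$ is shown to lie in $q_M*\eb$ and have the same $B^0$-coset as $q_M*p_M$, the isomorphism $\eb \cong B/B^0$ from Fact \ref{definably-amenable}(iv) pins the type down uniquely, yielding $t*s*q_M*p_M = q_M*p_M$ and completing the proof.
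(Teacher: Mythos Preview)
Your plan has a genuine gap in the $B^0$-coset bookkeeping. You place $\delta_g^{-1}$ to the \emph{left} of $s$, so in the realized product $k_0\,\delta_g^{-1}\,k_g\,b_g\,k\,b$ the factors $\delta_g^{-1}$ and $b_g$ are separated by $k_g\in K$. The ``key cancellation'' $\delta_g^{-1} b_g\in B^0$ is therefore unavailable: after writing $\delta_g^{-1} k_g = k_1 b_1$ via Iwasawa, the surviving $B$-factor is $b_1 b_g$, and there is no reason for $b_1\in\delta_g^{-1} B^0$. The $B$-component of $\delta_g^{-1} k_g$ depends essentially on $k_g$ --- for instance in $\GL_2$ with $k_g$ the permutation matrix and $\delta_g=\mathrm{diag}(p,1)$ one finds $b_1=\mathrm{diag}(1,p^{-1})$, and $b_1\delta_g=\mathrm{diag}(p,p^{-1})\notin B^0$. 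Commutativity of $B/B^0$ does not help, since the obstruction is conjugation by $k_g\in K$, not by an element of $B$; Lemma~\ref{kconjg} only absorbs $\mu_G$-elements under finitely-satisfiable conjugation; and Lemma~\ref{qpqp} and Corollary~\ref{left-action-on-idempotent} require the $B$-type sitting to the left of $q_M$ to already lie in $\eb$, which $\tp{b_1 b_g/M^\ext}$ does not.

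The paper's proof avoids this by inverting $s*q_M*p_M$ rather than $s$. It first applies Lemma~\ref{kj} to write $s*q_M*p_M=q*p_1$ with $q\in S_{KA}(M^\ext)$ and $p_1\in\jh$, so that the $K$- and $B$-parts are already cleanly separated. Writing $k_0 a_0\models q$, it then uses Lemma~\ref{fsgms}\ref{fsgms.iii} to find $r\in q_M*\ik$ with $q_M=r*\tp{k_0/M^\ext}$, reads off $\delta$ from the coset of $\tp{a_0/M^\ext}*p_1$, and checks via Lemma~\ref{qpqp} that $q_M*l_{\delta^{-1}}(p_M)*r*(s*q_M*p_M)=q_M*p_M$. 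The correction $\delta$ is computed from the $S_{KA}*\jh$ decomposition of $s*q_M*p_M$, not from an Iwasawa factorization of a realization of $s$; that is precisely what keeps the $K$-side and $B$-side inversions from interfering. If you want to salvage your argument, replace $\delta_g$ by this $\delta$ and insert the $K$-correction $r$ from Lemma~\ref{fsgms}\ref{fsgms.iii} --- at which point you will have reproduced the paper's proof.
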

	\begin{proof}
		By the previous lemma, we may assume that $s * q_M * p_M = q * p_1$ where $p_1 = \jh$ and $q \in S_{KA}(M^\ext)$. Let $N^*$ be an $|M|^+$ saturated extension of $M^\ext$. Let $k_0\in K(N^*)$ and $a_0\in A(N^*)$ such that $k_0a_0\models q$.
		Let $u\in \ea$ be an idempotent. Then $u*\tp{a_0/M^\ext}*p_1\in \ea*\jh=\eb$.

		If $\tp{a_0/M^\ext}*p_1\vdash \delta/B^0$, then by Lemma \ref{qpqp} we see  that 
		\[
		q_M*l_{\delta^{-1}}(p_M)*q_M*(u*\tp{a_0/M^\ext}*p_1)=q_M*p_M.
		\]
		Note that $q_M*u=q_M$ since $u\vdash A^0\sq K^0=\mu_G$. So we have
		\begin{align*}
			q_M*p_M &= q_M*l_{\delta^{-1}}(p_M)*q_M*(u*\tp{a_0/M^\ext}*p_1) \\
			&=q_M*l_{\delta^{-1}}(p_M)*q_M*\tp{a_0/M^\ext}*p_1.
		\end{align*}
		By Lemma \ref{fsgms}(iii), there is $r\in q_M*\ik$ such that $q_M=r*\tp{k_0/M^\ext}$. So we have 
		\[
		q_M*\tp{a_0/M^\ext}*p_1=r*\tp{k_0/M^\ext}*\tp{a_0/M^\ext}*p_1.
		\]
		Take $k\in K(N^{*})$ realizing the unique generic (or coheir) extension of  $r$ over $(M^\ext, k_0,h_0)$ and $h\in H$ realizing the heir of $p_1$ over $N^{*}$. Then $\tp{h_0h/N^*}$ the unique heir of some $p\in \jh$. We see that 
		\begin{align*}
			&r*\tp{k_0/M^\ext}*\tp{a_0/M^\ext}*p_1\\
			&=(\tp{k/M^\ext}*\tp{k_0/M^\ext})*(\tp{h_0/M^\ext}*\tp{h/M^\ext})\\
			&=\tp{kk_0/M^\ext}*\tp{h_0h/M^\ext}=\tp{kk_0h_0h/M^\ext}.
		\end{align*}
		On the other side, 
		\[
		r*q*p_1=r*\tp{k_0a_0/M^\ext}*p_1=\tp{kk_0h_0h/M^\ext}.
		\]
		We conclude that
		\begin{align*}
			q_M*p_M & = q_M*l_{\delta^{-1}}(p_M)*r*\tp{k_0/M^\ext}*\tp{a_0/M^\ext}*p_1\\
			&= q_M*l_{\delta^{-1}}(p_M)*r*q*p_1\\
			&=q_M*l_{\delta^{-1}}(p_M)*r*s * q_M * p_M,
		\end{align*}
		which is in $ S_G(M^\ext) * s * q_M * p_M$ as required.
	\end{proof}

	Let $q_M$, $p_M$ be as in above, and  $\m=S_G(M^\ext) * q_M * p_M$ be the minimal subflow generated by $q_M * p_M$,  we  now compute the Ellis group $E$ of $S_G(M^\ext)$, which is of the form $q_M * p_M*\m$.
	
	\begin{lem} Assume again that $\eb$ is the Ellis group of $S_B(M^\ext)$ generated by $p_M$. Then
		\[
		q_M * p_M * \m = q_M * \eb
		\]
	\end{lem}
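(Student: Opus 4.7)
The plan is to prove the equality by establishing both inclusions.

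For $q_M * \eb \subseteq q_M * p_M * \m$: take any $p \in \eb$ and, using the identification $\eb \cong B/B^0$, write $p = l_\delta(p_M)$ for the appropriate $\delta \in B$. Applying Lemma~\ref{qpqp} with $q_1 = q_M$, first factor $p$, and second factor $p_M$ yields $q_M * p * q_M * p_M = q_M * l_\delta(p_M) = q_M * p$, which places $q_M * p$ inside $S_G(M^\ext) * q_M * p_M = \m$. A second application of Lemma~\ref{qpqp} (first factor $p_M = l_{\mathrm{id}}(p_M)$, second factor $p$) gives $q_M * p_M * (q_M * p) = q_M * p$, so $q_M * p \in q_M * p_M * \m$.

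For the reverse inclusion $q_M * p_M * \m \subseteq q_M * \eb$, fix $s \in S_G(M^\ext)$ and set $e = q_M * p_M * s * q_M * p_M$; the task is to exhibit $p^* \in \eb$ with $e = q_M * p^*$. Decomposing $p_M = \alpha * \beta$ with $\alpha \in \ia$, $\beta \in \jh$ (valid because $\eb \subseteq \ia * \jh$), an application of Lemma~\ref{kj} to $(s * q_M * \alpha) * \beta$ expresses $s * q_M * p_M$ as $\tp{k_0 a_0/M^\ext} * p_1$ for some $k_0 \in K$, $a_0 \in A$, $p_1 \in \jh$. Picking an idempotent $u \in \ea$, the element $u * \tp{a_0/M^\ext} * p_1$ lies in $\ea * \jh = \eb$. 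The plan is then to imitate the bookkeeping of Proposition~\ref{AP}: apply Lemma~\ref{fsgms}(iii) to produce $r \in q_M * \ik$ with $q_M = r * \tp{k_0/M^\ext}$, and use the absorption $q_M * u = q_M$ (valid since $u \vdash A^0 \subseteq K^0$) together with heir/coheir manipulations to rewrite $q_M * p_M * \tp{k_0 a_0/M^\ext} * p_1$ as $q_M * p^*$ for an explicit $p^* \in \eb$ built from $p_M$, $u$, $\tp{a_0/M^\ext}$, and $p_1$.

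The main obstacle is the $\subseteq$ direction, particularly the reassociation inside $q_M * p_M * \tp{k_0 a_0/M^\ext} * p_1$: one must split $\tp{k_0 a_0/M^\ext}$ into $\tp{k_0/M^\ext} * \tp{a_0/M^\ext}$ via a careful heir/coheir choice, absorb the fsg factor $\tp{k_0/M^\ext}$ through the factorization $q_M = r * \tp{k_0/M^\ext}$, and identify the residual product as an element of $\eb$. The bookkeeping is essentially the same as in Proposition~\ref{AP}'s minimality argument; the only new twist is that we read off an $\eb$-representative, rather than verifying that the idempotent $q_M * p_M$ itself lies in the orbit.
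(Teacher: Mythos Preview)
Your argument for the inclusion $q_M * \eb \subseteq q_M * p_M * \m$ is correct and essentially the paper's: two applications of Lemma~\ref{qpqp} give $q_M * p = q_M * p_M * (q_M * p * q_M * p_M) \in q_M * p_M * \m$ for every $p \in \eb$.

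For the reverse inclusion your plan has a real gap. After writing a general element of $q_M * p_M * \m$ as $q_M * p_M * \tp{k_0 a_0/M^\ext} * p_1$, you propose to absorb the $K$-factor via the factorization $q_M = r * \tp{k_0/M^\ext}$ from Lemma~\ref{fsgms}(iii). But that factorization only helps when $q_M$ sits immediately to the left of $\tp{k_0/M^\ext}$; here $p_M$ separates them, and nothing in your outline explains how to pass through it. The analogy with Proposition~\ref{AP} is misleading: there the factorization is applied to the \emph{interior} copy of $q_M$ in $q_M * l_{\delta^{-1}}(p_M) * q_M * \tp{a_0/M^\ext} * p_1$, which is already adjacent to the $KA$-type. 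In your expression no such adjacency exists, and no amount of heir/coheir bookkeeping will manufacture it without an additional idea.

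The missing ingredient is Lemma~\ref{klem}. The paper does not try to push the $K$-factor leftward past $p_M$; instead it lets $p_M$ act first. For $q \in S_{KA}(M^\ext)$ one has $p_M * q \vdash \mu_G B$ by Lemma~\ref{klem}, so a realization of $p_M * q$ can be written as $\epsilon a_0 h_0$ with $\epsilon \in \mu_G = K^0$, $a_0 \in A$, $h_0 \in H$. Now $q_M$ is immediately to the left of $\epsilon$ and, being generic in $K$, absorbs it: $q_M * p_M * q * p_1 = q_M * \tp{a_0/M^\ext} * \tp{h_0 h/M^\ext}$. Inserting the idempotent $u \in \ea$ via $q_M * u = q_M$ then places this in $q_M * \ea * \jh = q_M * \eb$. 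Your plan should be reorganized around this use of Lemma~\ref{klem} rather than the Lemma~\ref{fsgms}(iii) factorization.
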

	\begin{proof}
		Let $p_1 \in \eb$. By Lemma \ref{qpqp},
		\begin{align*}
			q_M *p_1&= q_M*p_M*q_M * p_1 \\
			&= q_M*p_M*q_M*p_M*q_M * p_1\\
			&=q_M*p_M*(q_M * p_1*q_M*p_M)\in q_M * p_M * \m.
		\end{align*}
		Thus, we have  $q_M * \eb \subseteq q_M * p_M * \m$.
		
		Now we show that $q_M * p_M * \m \subseteq q_M * \eb$. 
		By lemma \ref{kj}, $\m$ is a subset of $S_{KA}(M^\ext)*\jh$, so it suffices to show $q_M * p_M * S_{KA}(M^\ext) * \jh \subseteq q_M * \eb$.

		Let $q \in S_{KA}(M^\ext)$ and $p_1 \in \jh$. By Lemma \ref{klem}, $p_M*q\vdash \mu_GB$. Let $N^*\succ M^\ext$ be $|M|^+$-saturated. Assume that $p_M*q=\tp{\epsilon a_0 h_0/M^\ext}$ for $\epsilon\in \mu_G(N^*)=K^0(N^*)$, $a_0\in A(N^*)$, and $h_0\in H(N^*)$.

		Let $k\in K(N^*)$ realize the  coheir of $q_M$ over $\dcl{M^\ext,\epsilon, a_0,h_0}$ and $h\models p_1|N^*$. Then a similar argument as in Proposition \ref{AP} shows that 
		\begin{align*}
			q_M * p_M *q*p_1&=\tp{k\epsilon a_0 h_0 h/M^\ext}\\
			&=\tp{k\epsilon /M^\ext}*\tp{a_0/M^\ext}*\tp{h_0 h/M^\ext}\\
			&=q_M*\tp{a_0/M^\ext}*\tp{h_0 h/M^\ext}.
		\end{align*}
		Let $u\in \ea$ be the idempotent. Then $q_M*u=q_M$, so we have
		\begin{align*}
			q_M*\tp{a_0/M^\ext}*\tp{h_0 h/M^\ext}&=q_M*u*\tp{a_0/M^\ext}*\tp{h_0 h/M^\ext}\\
			&\in q_M*\ea*\jh=q_M*\eb.
		\end{align*}
		This completes the proof.
	\end{proof}

	\begin{thm}
		The Ellis group of $S_G(M^\ext)$ is isomorphic to Ellis group of $S_B(M^\ext)$. Namely,  $q_M * \eb\cong\eb$.
	\end{thm}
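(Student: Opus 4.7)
The plan is to construct an explicit group homomorphism $\Phi: B \to E_G := q_M * \eb$ by $\delta \mapsto q_M * l_\delta(p_M)$, show it is surjective with kernel $B^0$, and combine with the isomorphism $\tau: \eb \to B/B^0$ from Fact \ref{definably-amenable}(iv) to conclude $\eb \cong E_G$.

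First I would verify $\Phi$ is a group homomorphism. Applying Lemma \ref{qpqp} with $q_1 = q_M$, $p = l_{\delta_1}(p_M)$ (playing the role of $l_\delta(p_M)$ in the statement), and $p' = l_{\delta_2}(p_M)$ yields $\Phi(\delta_1) * \Phi(\delta_2) = q_M * l_{\delta_1}(l_{\delta_2}(p_M))$. Unwinding $l_\delta(p) = p_\delta * p$ and using $p_{\delta_1} * p_{\delta_2} = p_{\delta_1\delta_2}$ in the Ellis group $\eb$, this simplifies to $q_M * l_{\delta_1\delta_2}(p_M) = \Phi(\delta_1\delta_2)$. Surjectivity is immediate: as $\delta$ ranges over $B$, the coset $l_\delta(p_M)/B^0 = \delta\delta_0^{-1} B^0$ ranges over all of $B/B^0$, so $\{l_\delta(p_M) : \delta \in B\} = \eb$ and hence $\Phi(B) = q_M * \eb = E_G$.

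The crux of the proof is $\ker \Phi = B^0$. The inclusion $B^0 \subseteq \ker \Phi$ is easy: for $\delta \in B^0$ we have $p_\delta = p_0$, so $l_\delta(p_M) = p_M$ and $\Phi(\delta) = q_M * p_M$, the identity of $E_G$. For the reverse, suppose $\Phi(\delta) = q_M * p_M$, i.e., $q_M * (p_\delta * p_M) = q_M * p_M$ with $p_\delta * p_M, p_M \in \eb$; it then suffices to establish the auxiliary claim that $p \mapsto q_M * p$ is injective on $\eb$. Granted this, $p_\delta * p_M = p_M$, so $p_\delta = p_0$ (cancelling in the group $\eb$), hence $\delta \in B^0$.

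To prove the injectivity of $p \mapsto q_M*p$ on $\eb$, suppose $q_M * p_1 = q_M * p_2$ with $p_i \vdash \delta_i B^0$. Left-multiplying both sides by $p_M$ and using $p_M * q_M \vdash K^0 B^0$ from Corollary \ref{left-action-on-idempotent} (together with normality of $B^0$ in $B$), both $p_M * q_M * p_i$ concentrate on $K^0 \delta_i B^0$, forcing these two sets to meet. A short calculation gives $\delta_1 \delta_2^{-1} \in K^0 B^0 \cap B$, and the final structural identity is $K^0 B^0 \cap B = B^0$: if $\delta = \epsilon b$ with $\epsilon \in K^0 = \mu_G$, $b \in B^0$, and $\delta \in B$, then $\epsilon = \delta b^{-1} \in \mu_G \cap B = \mu_B \subseteq B^0$ by Lemma \ref{ctp}, so $\delta \in B^0$. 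Thus $\delta_1 B^0 = \delta_2 B^0$ and $p_1 = p_2$. I expect the principal obstacle to be precisely this support-tracking argument; it is where the compact-dfg decomposition and the identification $K^0 = \mu_G$ become essential.
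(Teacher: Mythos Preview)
Your proposal is correct and follows essentially the same route as the paper: both build the isomorphism via $\delta \mapsto q_M * l_\delta(p_M)$ (the paper's map $r: \eb \to q_M*\eb$, $p \mapsto q_M * l_{\delta_0^{-1}}(p)$, is just this precomposed with a bijection so as to start from $\eb$ rather than $B$), verify the homomorphism property via Lemma~\ref{qpqp}, and prove injectivity using the key structural fact $\mu_G \cap B \subseteq B^0$. The one cosmetic difference is that for injectivity you left-multiply by $p_M$ and appeal to $p_M * q_M \vdash K^0 B^0$, whereas the paper uses $q_M \vdash K^0$ directly to read off $q_M * l_{\delta_0^{-1}\delta}(p_M) \vdash \mu_G B^0 \delta_0^{-2}\delta$ without the extra step.
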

	\begin{proof}
		Recall that $p_M\vdash B^0 \delta_0^{-1}$. Let $r: \eb \to q_M * \eb$ be the map defined by $p \mapsto q_M * l_{\delta_0^{-1}}(p)$. Because $l_{\delta_0^{-1}}$ is a bijection, we know $r$ is an onto map.\par
		Suppose $p, p^\prime \in \eb$ and assume $p = l_{\delta}(p_M)$, $p^\prime = l_{\delta^\prime}(p_M)$, then by lemma \ref{qpqp}:
		\begin{align*}
			r(p) * r(p^\prime) = q_M * l_{\delta_0^{-1}\delta}(p_M) * q_M * l_{\delta_0^{-1}\delta^\prime}(p_M) 
			= q_M * l_{\delta_0^{-2}\delta\delta^\prime}(p_M) .
		\end{align*}
		Now $l_{\delta_0^{-2}\delta\delta^\prime}(p_M)\vdash \delta_0^{-3}\delta\delta^\prime B^0$
		and $p * p^\prime\vdash \delta_0^{-2}\delta\delta^\prime B^0$, so \[
		l_{\delta_0^{-2}\delta\delta^\prime}(p_M)=l_{\delta_0^{-1}}(p*p'),\]
		and which implies that 
		\[
		r(p) * r(p^\prime)=q_M * l_{\delta_0^{-2}\delta\delta^\prime}(p_M)=q_M*l_{\delta_0^{-1}}(p*p')=r(p*p').
		\]
		So  $r$ is a group homomorphism.
		
		Now it remains to show that $r$ is injective. For $p = l_{\delta}(p_M)$, we have $r(p) = q_M * l_{\delta_0^{-1}\delta}(p_M) \vdash \mu_G B^0\delta_0^{-2}\delta$. If $r(p) = q_M * p_M \vdash \mu_G B^0\delta_0^{-1}$, then there are $\epsilon \in\mu_G$ and $b_1,b_2\in B^0$ such that $\epsilon b_1 \delta_0^{-2}\delta=b_1 \delta_0^{-1}$. We see that $\epsilon\in \mu_G\cap B\sq B^0$,  and conclude immediately that $\delta/B^0=\delta_0/B^0$, thus $p\vdash B^0$ is the idempotent. So $\ker(r)= \{\id_{\eb}\}$, i.e. $r$ is an isompophism.
	\end{proof}
	Thus finally we have our main theorem:
	\begin{thm}\label{main}
		Suppose that $G$ is a linear algebraic group over $\Q$ admits a Iwasawa decomposition $KB$, with $K$ open and definably compact over $\Q$ and $B$ a borel subgroup deinable over $\Q$. Then the Ellis group of $S_G(M^\ext)$ algebraically isomorphic to $ B/B^0$.
	\end{thm}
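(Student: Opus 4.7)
The plan is to combine the immediately preceding theorem with the structural identifications established at the start of Section 2.3. Since the hard dynamical work has already been carried out, this final step is essentially a chain of algebraic identifications.

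First, I would invoke the preceding theorem, which produces a group isomorphism $r\colon \eb \to q_M * \eb$ given by $p \mapsto q_M * l_{\delta_0^{-1}}(p)$, together with the lemma showing that $q_M * \eb$ coincides with the Ellis group $q_M * p_M * \m$ of $S_G(M^\ext)$. This reduces the task to identifying $\eb$, the Ellis group of $S_B(M^\ext)$, with $B/B^0$.

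Next, I would apply Fact \ref{definably-amenable}(iv): because $B = A \rtimes H$ is the semi-product of the fsg group $A$ and the dfg group $H$ (both definable over $\Q$), $B$ is definably amenable, so the quotient map $p \mapsto p/B^{00}$ is a group isomorphism from $\eb$ onto $B/B^{00}$. Hence it suffices to verify $B^{00} = B^0$. By Lemma \ref{B0=B00}, $B^{00} = A^{00} \rtimes H^0$. Since $A = T_{an}$ is the anisotropic part of a $\Q$-defined torus, $A(\Q)$ is compact, so $A$ is definably compact over $\Q$; by \cite{OP-plg} this forces $A^{00} = A^0$. Hence $B^{00} = A^0 \rtimes H^0 \leq B^0$, and combined with the automatic inclusion $B^{00} \leq B^0$ we conclude $B^{00} = B^0$.

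There is no genuine obstacle at this stage; the content of the theorem is already captured by the previous theorem and by Lemma \ref{B0=B00}. The only subtle point is recognizing that the fsg factor $A$ in the decomposition $B = A \rtimes H$ is what collapses $B^{00}$ to $B^0$, since otherwise Fact \ref{definably-amenable}(iv) would only yield an isomorphism with $B/B^{00}$ rather than with the (a priori larger) quotient $B/B^0$ asserted in the theorem.
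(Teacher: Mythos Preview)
Your approach is exactly the one the paper intends: Theorem~\ref{main} is stated with no separate proof, as it is meant to follow immediately from the preceding theorem (Ellis group of $S_G(M^\ext)\cong \eb$) together with the identification $\eb\cong B/B^{00}=B/B^0$ already recorded at the start of Section~2.3. So the overall strategy is correct.

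There is, however, a small logical slip in your last paragraph. You write that $B^{00}=A^0\rtimes H^0\leq B^0$, and then say this ``combined with the automatic inclusion $B^{00}\leq B^0$'' yields $B^{00}=B^0$; but those are two inclusions in the same direction and do not give equality. What you need is the reverse containment $B^0\leq A^0\rtimes H^0$. This holds because $A^0\rtimes H^0$ is itself an intersection of \emph{definable} finite-index subgroups of $B$ (namely the groups $A_i\rtimes H_j$ as $A_i$ and $H_j$ range over definable finite-index subgroups of $A$ and $H$), and $B^0$ is by definition the smallest such intersection. With that correction your argument goes through and matches the paper's.
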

	
	\begin{Example}
		We now consider the general linear group $G(x)=\GL(n,x)$. Then $G(\Q)$ has the Iwasawa decomposition $G(\Q)=K(\Q)B(\Q)$, where $K(\Q)=\GL(n,\Z)$ is a maxiaml open compact subgroup, and $B(\Q)$ is the subgroup consisting of all upper triangular matrices. Since $B=D\rtimes B_u$, where $D$ is the subgroup of diagonal matrices and $B_u$ is the subgroup of strictly upper triangular matrices, we see that $B/B^0\cong D/D^0\cong (\Gm/\Gm^0)^n$. Now $\Gm/\Gm^0$ is isomorphic to $(\hat{\z}\times \Z^*)$, with $\hat{\z}=\invlim{\z/n}$ and $\Z^*=\{x\in \Z|\ \nu(x)=0\}$ by Remark 2.5 in \cite{PPY-sl2qp}. We finally conclude that the Ellis group corresponding to $\GL(n,x)$ is isomorphic to $(\hat{\z}\times \Z^*)^n$, independent of the models. 
		
		Similarly,  for $G(x)=\SL(n,x)$,  we have that the corresponding Ellis group is isomorphic to $(\hat{\mathbb Z}\times \Z^*)^{n-1}$. 
	\end{Example}

	\vspace*{2ex}  
	Acknowledgement:\ The research is supported by The National Social Science Fund of China(Grant No.20CZX050).

\end{document}